\newtheorem{theorem}{Theorem}[section]
\newtheorem{lemma}[theorem]{Lemma}
\newtheorem{proposition}[theorem]{Proposition}
\theoremstyle{remark}
\numberwithin{equation}{section}
\def\eps{\varepsilon}
\def\bxi{ {\boldsymbol\xi} }
\def\bzeta{ {\boldsymbol\zeta} }
\title{Homogenization of the Schr{\"o}dinger equation with large, random potential}
\author{Ningyao Zhang \thanks{Department of Applied Physics and
        Applied Mathematics, Columbia University,
        New York NY, 10027; nz2164@columbia.edu} \and Guillaume Bal\thanks{Department of Applied Physics and
        Applied Mathematics, Columbia University,
        New York NY, 10027; gb2030@columbia.edu}}
\begin{document}
\maketitle

\begin{abstract}
\noindent We study the behavior of solutions to a Schr{\"o}dinger equation with large, rapidly oscillating, mean zero, random potential with Gaussian distribution. We show that in high dimension $d>\mathfrak{m}$, where $\mathfrak{m}$ is the
order of the spatial pseudo-differential operator in the Schr{\"o}dinger equation (with $\mathfrak{m}=2$ for the standard Laplace operator), the solution converges in the $L^2$ sense uniformly in time over finite intervals
to the solution of a deterministic Schr{\"o}dinger equation as the correlation length $\varepsilon$ tends to $0$. This generalizes to long times the convergence results obtained for short times and for the heat equation in \cite{B-MMS-10}. The result is based on the decomposition of multiple scattering contributions introduced in \cite{Erdos-Yau2}. In dimension $d<\mathfrak{m}$, the random solution converges to the solution of a stochastic partial differential equation; see \cite{B-CMP-09,ZB-ST-12}.
\end{abstract}

\section{Introduction}

There is a long list of derivations of macroscopic models for solutions to equations involving small scale heterogeneities $\eps\ll1$. One very successful framework is that of homogenization theory. In the limit  $\eps\to0$, it is shown that the heterogeneous solution converges to the deterministic solution of an effective medium equation \cite{BLP-78,JKO-SV-94}. There are cases, however, where the solutions in the limit $\eps\to0$ remain stochastic, typically in low spatial dimensions; see e.g. \cite{B-CMP-09,FGPS-07,H-CPAM-11,PP-GAK-06}.

The results of \cite{B-CMP-09,PP-GAK-06} apply to parabolic equations of the form of a heat equation with large, mean-zero, highly oscillatory, random potentials (zeroth-order terms). In \cite{B-MMS-10}, it was shown that for large dimensions, the random solution converged to a deterministic solution, which is consistent with the homogenization framework. However, such results could only be obtained for short times, and it is unclear whether they hold for larger times.

In this paper, we revisit the homogenization limit for the (time-dependent) Schr\"odinger equation. Because the solution operator is unitary in this case, we expect to be able to control the long-time asymptotic behavior of the solution. The method of proof, as in \cite{B-CMP-09,B-MMS-10}, is based on a Duhamel expansion of the random solution in terms involving increasing numbers of scattering events. As the number of terms grows exponentially with the number of scattering events, we need to assume that the potential is Gaussian in order to control such a growth. The summation technique used in \cite{B-MMS-10} cannot extend to long time controls. A similar difficulty occurs in the derivation of radiative transfer equations for the energy density of high frequency waves propagating in highly oscillatory media \cite{BKR-KRM-10,Erdos-Yau2, Spohn}. A precise summation of the scattering terms was introduced in \cite{Erdos-Yau2} to allow for long-time expansions. We adapt this technique to the asymptotic analysis of Schr\"odinger equations with large potentials.

\medskip

We now present in more detail the model considered in this paper and the main convergence result.
Let $\mathfrak{m}\geq 2$. We consider the following
Schr{\"o}dinger equation in dimension $d>\mathfrak{m}$:
\begin{equation}
\label{eq1}
\left\{
\begin{aligned}
\Big(i\frac{\partial}{\partial t}+\big(P(D)-\frac{1}{\varepsilon^{\mathfrak{m}/2}}q(\frac{x}{\varepsilon})\big)\Big)u_\varepsilon(t,x)&=0, \qquad t>0,\, x\in \mathbb{R}^d\\
u_\varepsilon(0,x)&=u_0(x), \qquad x\in \mathbb{R}^d.
\end{aligned}
\right.
\end{equation}
Here, $P(D)$ is the pseudo-differential operator with symbol $\hat{p}(\xi)=|\xi|^{\mathfrak{m}}$. We assume that $q(x)$ is a real valued mean zero stationary Gaussian process
defined on a probability space $(\Omega,\mathcal{F},\mathbb{P})$,
with correlation function $R(x)=\mathbb{E}\{q(y)q(x+y)\}$, and
the non-negative power spectrum $\hat{R}(\xi)$ is radially symmetric, smooth, and decays fast. For simplicity, we assume $\hat{R}\in\mathcal{S}(\mathbb{R}^d)$. In fact,  $\|R\|_{12d,12d}<+\infty$, where
\begin{equation}
\begin{aligned}
\|f\|_{d_1,d_2}&:=\|\langle x \rangle^{d_1} \langle \nabla_x \rangle^{d_2} f(x)\|_2,\qquad
\langle x \rangle:=(1+x^2)^{1/2},
\end{aligned}
\end{equation}
is enough. We choose the initial condition $u_0(x)$ to be smooth such that
$\hat{u}_0(\xi)\langle\xi\rangle^{6d}\in L^2(\mathbb{R}^d)$.\\
For any finite time $T>0$, the existence of a weak solution
$u_{\varepsilon}(t,x)\in L^2(\Omega\times\mathbb{R}^d)$ uniformly in
time $t\in(0,T)$ and $0<\varepsilon<\varepsilon_0$ can be proved by
using a method based on Duhamel expansion.\\
As $\varepsilon\rightarrow 0$, we show that the solution
$u_{\varepsilon}(t)$ to (1.1) converges strongly in
$L^2(\Omega\times\mathbb{R}^d)$ uniformly in $t\in(0,T)$ to its
limit $u(t)$ solution of the following homogenized equation
\begin{equation}\label{uhom}
\left\{
\begin{aligned}
(i\frac{\partial}{\partial t}+P(D)-\rho)u(t,x)&=0,\qquad t>0,\, x\in \mathbb{R}^d\\
u(0,x)&=u_0(x),\qquad x\in \mathbb{R}^d,
\end{aligned}
\right.
\end{equation}
where the potential is given by
\begin{equation}
\rho=\int_{\mathbb{R}^d}\frac{\hat{R}(\xi)}{|\xi|^{\mathfrak{m}}}d\xi.
\end{equation}
The main result of this paper is the following convergence result:
\begin{theorem}\label{thm}
There exists a solution to (1.1) $u_{\varepsilon}(t)$ uniformly in
$0<\varepsilon<\varepsilon_0$ for $t>0$. Moreover, we have the
convergence results for all $t\in (0,T)$
\begin{equation}
\label{thm1.1} \displaystyle\lim_{\varepsilon\rightarrow
0}\mathbb{E}\|(u_{\varepsilon}-u)(t)\|_2^2=0.
\end{equation}
\end{theorem}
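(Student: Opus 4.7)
Because the Schr\"odinger flow is pathwise unitary, one has $\E\|u_\varepsilon(t)\|_2^2=\|u_0\|_2^2=\|u(t)\|_2^2$, so
\begin{equation*}
\E\|u_\varepsilon(t)-u(t)\|_2^2=2\|u_0\|_2^2-2\,\mathrm{Re}\,\E\langle u_\varepsilon(t),u(t)\rangle,
\end{equation*}
and the convergence \eqref{thm1.1} is equivalent to $\E\langle u_\varepsilon(t),u(t)\rangle\to\|u_0\|_2^2$. This reduction is the essential payoff of working with the Schr\"odinger rather than heat propagator, since unitarity also furnishes the a priori bound needed to control the Duhamel remainder uniformly in $\varepsilon$ and $t$, which is precisely the long-time obstruction encountered in \cite{B-MMS-10}.

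Let $\U(t)$ denote the free propagator associated with $P(D)$ and set $V_\varepsilon(x)=\varepsilon^{-\mathfrak{m}/2}q(x/\varepsilon)$. Iterating Duhamel's formula yields
\begin{equation*}
u_\varepsilon(t)=\sum_{n=0}^{N-1}u_{\varepsilon,n}(t)+r_{\varepsilon,N}(t),\qquad u_{\varepsilon,n}(t)=(-i)^n\!\int_{\Delta_n(t)}\!\U(s_0)V_\varepsilon\,\U(s_1)\cdots V_\varepsilon\,\U(s_n)u_0\,ds,
\end{equation*}
with $\Delta_n(t)$ the $n$-simplex. I would pass to Fourier variables, where $V_\varepsilon$ becomes convolution by $\varepsilon^{d-\mathfrak{m}/2}\hat q(\varepsilon\,\cdot)$ and $\U(s_j)$ becomes multiplication by a unimodular symbol $e^{\pm is_j|\xi|^{\mathfrak{m}}}$. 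Using $u(t)=e^{-i\rho t}\U(t)u_0$, the inner product $\E\langle u_\varepsilon(t),u(t)\rangle$ splits, via Wick's theorem (valid since $q$ is Gaussian), into a sum over $n$ of contributions that vanish for odd $n$ and, for $n=2k$, decompose over the $(2k-1)!!$ perfect pairings of the $2k$ momenta. Each pairing contributes a product of $\hat R$'s on paired momenta against an oscillatory integrand over $\Delta_{2k}(t)\times\R^{2kd}$; the Schwartz decay of $\hat R$ together with the weighted regularity $\hat u_0\langle\xi\rangle^{6d}\in L^2$ supplies the integration-by-parts budget required throughout.

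The decisive structural step is to split the pairings into \emph{nested} (ladder) and \emph{crossed} classes. In the nested pairings, each $V_\varepsilon$ is paired with its immediate time neighbor; after the rescaling $\eta\mapsto\eta/\varepsilon$ the $k$ blocks decouple, and dominated convergence delivers one factor of $-i\rho=-i\int\hat R(\eta)/|\eta|^{\mathfrak{m}}d\eta$ per block, the integrability of which is precisely what the hypothesis $d>\mathfrak{m}$ guarantees. Summation in $k$ then reconstitutes the Taylor series of $e^{-i\rho t}\|u_0\|_2^2$, which cancels the prefactor $e^{i\rho t}$ and yields the target $\|u_0\|_2^2$. All remaining crossed pairings must be proved negligible: a crossing removes one degree of freedom by forcing a paired internal momentum to a region where the $\hat R$--factor absorbs additional smallness of the order $\varepsilon^{\mathfrak{m}/2}$.

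The principal obstacle is the combinatorial explosion: $(2k-1)!!$ pairings versus only $t^k/k!$ entropy from $\Delta_{2k}(t)$, so a naive termwise bound diverges. This is exactly the difficulty that the long-time expansion of \cite{Erdos-Yau2} was designed to circumvent, via a graph-by-graph momentum-space amplitude analysis sharp enough to extract smallness from every crossing and thereby dominate the factorial; adapting it to the pseudo-differential symbol $|\xi|^{\mathfrak{m}}$ and to the $L^2(\Omega\times\R^d)$ norm (rather than the Wigner framework of the transport setting) is the technical heart of the proof. Once the per-pairing estimate is in hand, one chooses $N\asymp|\log\varepsilon|$ so that the tail $r_{\varepsilon,N}$ is negligible by unitarity, the nested pairings produce $\|u_0\|_2^2$, the crossed pairings vanish, and \eqref{thm1.1} follows uniformly in $t\in(0,T)$.
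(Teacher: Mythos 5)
Your unitarity reduction --- $\E\|u_\varepsilon(t)-u(t)\|_2^2 = 2\|u_0\|_2^2 - 2\,\mathrm{Re}\,\langle\E u_\varepsilon(t),u(t)\rangle$, so only the first moment $\E u_\varepsilon$ needs to be tracked for the main sum --- is a genuine observation and is not the route the paper takes; the paper instead expands $\E\|u_\varepsilon-u\|_2^2$ directly, which forces a full second-moment analysis over two Duhamel chains and makes the two-chain crossing estimate (Lemma 2.1) necessary. Under your reduction that part would simplify.

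There is, however, a concrete gap where you dispose of the remainder. You assert that the Duhamel tail $r_{\varepsilon,N}$ is ``negligible by unitarity'' once $N\asymp|\log\varepsilon|$. Unitarity gives $\|u_\varepsilon(t)\|_2=\|u_0\|_2$, but it gives no smallness at all for $\|u_\varepsilon(t)-\sum_{n<N}u_{\varepsilon,n}(t)\|_2$: the truncated Duhamel sum is not unitary, and a priori the remainder can have norm of order $2\|u_0\|_2$. Even after your reduction the tail enters through $|\langle\E\Psi_{N,\varepsilon},u\rangle|\le \|u_0\|_2\,(\E\|\Psi_{N,\varepsilon}\|_2^2)^{1/2}$, so a second-moment estimate on the remainder is unavoidable --- and this is exactly where the Erd\H{o}s--Yau partial time integration, the device that actually removes the short-time restriction of \cite{B-MMS-10}, must appear. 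The paper's Section 3 subdivides $[0,t]$ into $\kappa(\varepsilon)=|\log\varepsilon|^{1/\gamma^2}$ slices, re-expands by Duhamel on each slice, and gains the crucial factor $\kappa^{-n_0}$ from the $\sim n_0$ resolvents carrying the large dissipation $\eta=\kappa/t$ (Lemma 3.1), re-doing the crossing, non-crossing and simple estimates with two spectral parameters $(\alpha,\tilde\alpha)$ (Lemmas 3.2--3.4, Proposition 5.3). You attribute the Erd\H{o}s--Yau input to a per-pairing amplitude bound sharp enough to beat the $(2k-1)!!$, but per-pairing bounds alone (Section 2 of the paper) only control the finite sum $\sum_{n<n_0}$; it is the time slicing that tames the remainder, and your proposal omits it rather than replaces it.
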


The rest of the paper is organized as follows. Section 2 recasts
the solution to \eqref{eq1} as a Duhamel series expansion in the frequency
domain. We estimate the $L^2$ norm of the first $n_0$ terms by calculating the
contributions of graphs in three categories similar to those
defined in \cite{B-MMS-10}. Section 3 estimates the $L^2$ norm of the error term
by first subdividing the time integration into time intervals of smaller sizes, and then using
Duhamel formula in each time interval. This method, introduced in \cite{Erdos-Yau2},
significantly improves the error estimates compared to the direct estimates of infinite Duhamel terms and enables the elimination of the restriction to short times. The estimates given in these sections are used in section 4 to characterize the limit of the solution $u_{\varepsilon}(t,x)$. Section 5 provides the proofs for the inequalities used for justifying the estimates in the previous sections.

The analysis of a parabolic equation of the type of the heat equation (with $i\partial_t$ replaced by $\partial_t$) is performed in \cite{B-MMS-10} for $d\geq \mathfrak{m}$. Up to a logarithmic correction, we expect the limit of the solution to \eqref{eq1} to be deterministic also for the critical dimension $\mathfrak{m}=d$. In \cite{B-MMS-10}, the random fluctuations about the deterministic limit are also analyzed for the heat equation. We expect a similar behavior to occur for the Schr\"odinger equation \eqref{eq1} for short times. We do not know the behavior of the random fluctuations for arbitrary times $t\in (0,T)$.

In lower spatial dimension $d<\mathfrak{m}$, the limit of the solutions to \eqref{eq1} as $\eps\to0$ remains stochastic. This behavior was analyzed for the heat equation in \cite{B-CMP-09,KN-PA-10}. The limit of $u_\eps$ is then shown to be the solution of a stochastic partial differential equation with multiplicative noise (written as a Stratonovich product). The analysis of \eqref{eq1} for $d<\mathfrak{m}$ is performed in \cite{ZB-ST-12}. Note that several results of convergence may be extended to the case of random potential with long range correlations or random potentials that display both temporal and spatial fluctuations \cite{B-AMRX-11}.

\section{Duhamel expansion}
We denote by $e^{itH}$ the propagator for the equation (1.1). The
Duhamel expansion then states that for any $n_0\geq 1$
\begin{equation}
u_{\varepsilon}(t)=e^{itH}u_0=\sum_{n=0}^{n_0-1}u_{n,\varepsilon}(t)+\Psi_{n_0,\varepsilon}(t),
\end{equation}
where for $H_0:=(-\Delta)^{\frac{\mathfrak{m}}{2}}$, we have defined
\begin{equation}
\label{Duhamel} u_{n,\varepsilon}(t):=(-i)^n
(\frac{1}{\varepsilon^{\mathfrak{m}/2}})^n
\int_0^t\cdots\int_0^t(\prod_{k=0}^n ds_k)
\delta(t-\sum_{k=0}^{n}s_k) e^{i s_0
H_0}q(\frac{x}{\varepsilon})\cdots q(\frac{x}{\varepsilon})e^{i s_n
H_0}u_0,
\end{equation}
and the error term is given by
\begin{equation}
\Psi_{n_0,\varepsilon}(t)=(-i)\frac{1}{\varepsilon^{{\mathfrak{m}}/{2}}}\int_0^t
ds e^{i(t-s)H}q(\frac{x}{\varepsilon})u_{n_0-1,\varepsilon}(s).
\end{equation}

We shall choose
\begin{equation}
n_0=n_0(\varepsilon):=\frac{\gamma|\log{\varepsilon}|}{\log{|\log{\varepsilon}|}},
\end{equation}
for some fixed $0<\gamma\ll\lambda$ sufficiently small, where $\lambda$ is defined as
\begin{equation}
\lambda=\left\{
\begin{array}{ll}
d-\mathfrak{m} & \mathfrak{m}<d\leq 2\mathfrak{m}\\
\mathfrak{m} & d>2\mathfrak{m}.
\end{array}
\right.
\end{equation}
For any subset $I\subset
\mathbb{N}$, we define the kernel for the evolution in the Fourier
space as
\begin{equation}
K(t;\boldsymbol\xi,I):=(-i)^{|I|-1}\int_0^t(\prod_{k\in
I}ds_k)\delta(t-\sum_{k\in I}s_k)\prod_{k\in I} e^{i s_k
\xi_k^{\mathfrak{m}}}.
\end{equation}
Hereafter, we use the notation
$\xi^{\mathfrak{m}}=|\xi|^{\mathfrak{m}}$. In the special case
where $I=\{0,\cdots,n\}$, we denote by $K(t;\boldsymbol\xi,n):=K(t;\boldsymbol\xi,\{0,\cdots,n\})$ and
$\boldsymbol\xi_n := \boldsymbol\xi_{I_n}$. Denote
$\boldsymbol\xi_{n,\hat{0}}=\boldsymbol
\xi_{I_n\backslash\{0\}}$.\\
Let us introduce
$\hat{q}_{\varepsilon}(\xi)=\varepsilon^{d-\frac{\mathfrak{m}}{2}}\hat{q}(\varepsilon\xi)$,
the Fourier transform of
$\varepsilon^{-\frac{\mathfrak{m}}{2}}q(\frac{x}{\varepsilon})$.
Denote the contribution from the potential term by
\begin{equation}
L(\boldsymbol\xi,n):=\prod_{k=1}^{n}\hat{q}_{\varepsilon}(\xi_k-\xi_{k-1}).
\end{equation}
We may rewrite the $n^{th}$ order wave function as
\begin{equation}
\hat{u}_{\varepsilon,n}(t,\xi_0)=\int
K(t;\boldsymbol\xi,n)L(\boldsymbol\xi,n)\hat{u}_0(\xi_n)
d\boldsymbol\xi_{n,\hat{0}}.
\end{equation}
We need to introduce the following moments
\begin{equation}
U_{\varepsilon}^n(t,\xi_0)=\mathbb{E}\{\hat{u}_{\varepsilon,n}\},
\end{equation}
which are given by
\begin{equation}
\label{Uepsilon}
U_{\varepsilon}^n(t,\xi_0)=(-i)^n\int
K(t,\boldsymbol\xi,n)\mathbb{E}\{L(\bxi,n)\}\hat{u}_0(\xi_n)d\bxi_{n,\hat{0}},
\end{equation}
and
\begin{equation}
U_{\varepsilon}^{n,m}(t,\xi_0,\zeta_0)=\mathbb{E}\{\hat{u}_{\varepsilon,n}(t,\xi)\overline{\hat{u}_{\varepsilon,m}}(t,\zeta)\},
\end{equation}
which are given by
\begin{equation}
\begin{aligned}
U_{\varepsilon}^{n,m}(t,\xi_0,\zeta_0)=&(-i)^{n+m} \int
K(t,\bxi,n)\overline{K(t, \boldsymbol\zeta,m)}
\mathbb{E}\{L(\bxi,n)\overline{L(\boldsymbol\zeta,m)}\}
\hat{u}_0(\xi_n)\overline{\hat{u}_0(\zeta_m)}d\bxi_{n,\hat{0}}d\boldsymbol\zeta_{m,\hat{0}}.
\end{aligned}
\end{equation}

We need to estimate moments of the Gaussian process
$\hat{q}_{\varepsilon}$. The expectation in $U_{\varepsilon}^{n,m}$
vanishes unless there is $\bar{n}\in\mathbb{N}$ such that
$n+m=2\bar{n}$ is even. The moments are thus given as a sum of
products of the expectation of pairs of terms
$\hat{q}_{\varepsilon}(\xi_k-\xi_{k+1})$, where the sum runs over
all possible pairings. We define the pair $(\xi_k,\xi_l)$, $1\leq
k<l$, as the contribution in the product given by
\begin{equation}
\begin{aligned}
\mathbb{E}\{\hat{q}_{\varepsilon}(\xi_{k-1}-\xi_k)\hat{q}_{\varepsilon}(\xi_{l-1}-\xi_l)\}&=\varepsilon^{d-\mathfrak{m}}\hat{R}(\varepsilon(\xi_k-\xi_{k-1}))
\delta(\xi_k-\xi_{k-1}+\xi_l-\xi_{l-1})\\
&=\varepsilon^{d-\mathfrak{m}}r(\varepsilon(\xi_k-\xi_{k-1}))
r(\varepsilon(\xi_l-\xi_{l-1}))\delta(\xi_k-\xi_{k-1}+\xi_l-\xi_{l-1})
\end{aligned}
\end{equation}
with $r(\xi):=\hat{R}(\xi)^{1/2}$.

Define
\begin{equation}
F(\bxi,n):=\prod_{k=1}^{n} r(\varepsilon(\xi_k-\xi_{k-1}))
\hat{u}_0(\xi_n).
\end{equation}
Denote by $\Delta_{\pi}$ the product of delta functions associated
with the pairing $\pi$. Our analysis is based on the estimate of
\begin{equation}
U_{\varepsilon}^n(t,\xi)=\sum_{\pi\in\Pi(n)}I_{\pi}
\end{equation}
with
\begin{equation}
\label{I} I_{\pi}:=(-i)^n\int
K(t,\bxi,n)\Delta_{\pi}(\bxi)F(\bxi,n)d\bxi,
\end{equation}
and
\begin{equation}
\int
U_{\varepsilon}^{n,m}(t,\xi,\xi)d\xi=\sum_{\pi\in\Pi(n,m)}C_{\pi}
\end{equation}
with
\begin{equation}
\label{C} C_{\pi}:=(-i)^{n+m}\int
K(t,\bxi,n)\overline{K(t,\bzeta,m)}\Delta_{\pi}(\bxi,\bzeta)\delta(\xi_0-\zeta_0)F(\bxi,n)\overline{F(\bzeta,m)}d\boldsymbol\xi_n
d\boldsymbol\zeta_n.
\end{equation}
By Lemma 5.1, $K(t;\boldsymbol\xi,I)$ can also be written as
\begin{equation}
K(t;\boldsymbol\xi,I)=ie^{t\eta}\int d\alpha e^{-i\alpha t}
\prod_{k\in I} \frac{1}{\alpha+\xi_k^{\mathfrak{m}}+i\eta}.
\end{equation}
We let $\eta=t^{-1}$ in this section. Therefore, $I_{\pi}$ in \eqref{I} and $C_{\pi}$ in \eqref{C} can be written explicitly as
\begin{equation}
I_{\pi}=
-i^{n+1}e^{t\eta}\int d\alpha e^{-i\alpha t}\displaystyle\prod_{k=1}^{n} \frac{r(\varepsilon(\xi_k-\xi_{k-1}))}{\alpha+\xi_k^{\mathfrak{m}}+i\eta}\Delta_{\pi}(\xi)\hat{u}_0(\xi_n)d{\bxi}_{n,\hat{0}},
\end{equation}
and
\begin{equation}
\label{cpi}
\begin{aligned}
C_{\pi}=
-i^{n+m}e^{2t\eta}\int &d\alpha d\beta e^{-i(\alpha-\beta)t} \prod_{k=0}^{n}\frac{1}{\alpha+\xi_k^{\mathfrak{m}}+i\eta}
\prod_{l=0}^{m}\frac{1}{\beta+\zeta_l^{\mathfrak{m}}+i\eta}\prod_{k=1}^{n}r(\varepsilon(\xi_k-\xi_{k-1}))\\
&\times\prod_{l=1}^{m}r(\varepsilon(\zeta_l-\zeta_{l-1})\Delta_{\pi}(\bxi,\bzeta)\hat{u}_0(\xi_n)\overline{\hat{u}(\zeta_m)}\delta(\xi_0-\zeta_0)d\bxi_n d\bzeta_m.
\end{aligned}
\end{equation}
In order to consider the two sets of momenta in a unified way, we introduce the notation
\begin{equation}
\alpha_k=\left\{
\begin{array}{ll}
\alpha & 0\leq k \leq n\\
\beta & n+1\leq k \leq n+m+1,
\end{array}
\right.
\end{equation}
and define $\xi_{n+k+1}=\zeta_{m-k}$ for $0\leq k \leq m$. \eqref{cpi} can then be rewritten as
\begin{equation}
\begin{aligned}
C_{\pi}=-i^{n+m}e^{2t\eta}\int &d\alpha d\beta e^{-i(\alpha-\beta)t}\prod_{k=0}^{n+m+1} \frac{1}{\alpha_k+\xi_k^{\mathfrak{m}}+i\eta}\prod_{k=1,k\neq n+1}^{n+m+1} r(\varepsilon(\xi_k-\xi_{k-1}))\\
&\hat{u}_0(\xi_n)\overline{\hat{u}_0(\xi_m)}\delta(\xi_0-\xi_{n+m+1})d\bxi_{n+m+1}.
\end{aligned}
\end{equation}
In each instance of the pairings, we have $\bar{n}$ terms $k$ and
$\bar{n}$ terms $l\equiv l(k)$. Note that $l(k)\geq k+1$. The
collection of pairs $(\xi_k,\xi_{l(k)})$ for $\bar{n}$ values of $k$
and $\bar{n}$ values of $l(k)$ constitutes a graph $\pi\in\Pi(n,m)$,
where $\Pi(n,m)$ denotes the set of all graphs $\pi$ with $n$ copies
of $\hat{q}$ and $m$ copies of $\bar{\hat{q}}$. The graphs are
defined similarly in the calculation of $U_{\varepsilon}^n(t,\xi_0)$
in \eqref{Uepsilon} for $n=2\bar{n}$, and we denote by $\Pi(n)$ the set
of graphs with $n$ copies of $\hat{q}$. We denote by
$A_0=A_0(\mathfrak{\pi})$ the collection of the $\bar{n}$ values of
$k$ and by $B_0=B_0(\mathfrak{\pi})$ the
collection of the $\bar{n}$ values of $l(k)$.

Now we introduce several classes of graphs for $C_{\pi}$. We say
that the graph has a crossing if there is a $k\leq n$ such that
$l(k)\geq n+2$. We denote by $\Pi_c(n,m)\subset\Pi(n,m)$ the set of
graphs with at least one crossing and by $\Pi_{nc}(n,m)=\Pi(n,m)
\backslash \Pi_c(n,m)$ the non-crossing graphs.
We denote by simple
pairs the pairs such that $l(k)=k+1$, which thus involve a delta
function of the form $\delta(\xi_{k+1}-\xi_{k-1})$.
The unique
graph with only simple pairs is called
the simple graph, which is denoted by $\Pi_s(n,m)$. $\Pi_{ncs}(n,m)=\Pi_{nc}(n,m)\backslash\Pi_{s}(n,m)$ denotes the set of non-crossing, non-simple graphs.

We also use the notation $\Pi_s(n)$ and $\Pi_{ncs}(n)$ defined for $I_{\pi}$, which denote the simple graph and the set of non-crossing, non-simple graphs, respectively.

We shall estimate $F(\bxi,n)$ before we proceed to analyze the
graphs. For the initial condition, we define
\begin{equation}
\Phi(\xi)=\langle \xi \rangle ^{6d}|\hat{u}_0(\xi)|.
\end{equation}
From our assumption on $u_0$ given in Section 1, we have $\Phi(\xi)\in L^2(\mathbb{R}^d)$.
Since $\|\hat{R}\|_{12d,12d}<+\infty$, we have
\begin{equation}
r(\xi)\leq \frac{C}{\langle \xi \rangle^{6d}}.
\end{equation}
Hence, we obtain the estimate for $F(\bxi,n)$
\begin{equation}
\label{F}
\begin{aligned}
|F(\bxi,n)|\leq &\prod_{k=1}^{n}
\frac{1}{\langle\varepsilon(\xi_k-\xi_{k-1})\rangle^{6d}}\frac{\Phi(\xi_n)}{\langle
\xi_n \rangle^{6d}}\\
\leq &\prod_{k=1}^n \frac{1}{\langle
\varepsilon(\xi_k-\xi_{k-1})\rangle^{6d}}
\frac{\Phi(\xi_n)}{\langle \xi_n \rangle^{2d}\langle
\varepsilon\xi_n
\rangle^{4d}}\\
\leq &C^n \Phi(\xi_n) \frac{1}{\langle
\xi_{n}\rangle^{2d}}\prod_{i=1}^{2} \frac{1}{\langle \varepsilon
\xi_{l_i}\rangle^{2d}} \prod_{k=1}^n
\frac{1}{\langle\varepsilon(\xi_k-\xi_{k-1})\rangle^{2d}}.
\end{aligned}
\end{equation}
We have the freedom to choose $l_1$ and $l_2$ between $0$ and
$n$.\\

\noindent {\bf Analysis of the crossing graphs}
\begin{lemma}
If $\pi\in\Pi_c(n,m)$, we have
\begin{equation}
|C_{\pi}|\leq (C\log\varepsilon)^{\bar{n}}
\varepsilon^{\lambda}\|\Phi(\xi)\|_2^2.
\end{equation}
\end{lemma}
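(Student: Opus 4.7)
The plan is to estimate $|C_{\pi}|$ starting from the explicit frequency representation \eqref{cpi}. Since $\eta = t^{-1}$ in this section, $|e^{2t\eta}| = e^2$. I would pass to absolute values under the integrals, insert the pointwise bound \eqref{F} for $|F(\bxi,n)|$ and its analogue for the $\bzeta$ variables, and then resolve the $\bar n$ delta functions contained in $\Delta_\pi$ together with the outer constraint $\delta(\xi_0 - \xi_{n+m+1})$. This leaves a multiple integral over $\bar n + 1$ independent momenta in $\mathbb{R}^d$ and the two dual frequencies $\alpha, \beta$, whose integrand is a product of $2\bar n + 2$ resolvents $|\alpha_k + \xi_k^{\mathfrak{m}} + i\eta|^{-1}$, the $2\bar n$ factors $|r(\varepsilon\,\cdot)|$, and the weight $\Phi(\xi_n)\,\overline{\Phi(\xi_{n+1})}$.

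I would then carry out the momentum integrations in an order dictated by the structure of $\pi$. The single-variable propagator bound, to be proved in Section~5, has the generic form
\begin{equation*}
\int_{\mathbb{R}^d} \frac{|r(\varepsilon(\xi - \xi'))|}{|\alpha + \xi^{\mathfrak{m}} + i\eta|}\, d\xi \;\le\; C\,|\log\varepsilon|, \qquad \eta = t^{-1},
\end{equation*}
and each of the $\alpha, \beta$ integrations costs one additional $|\log\varepsilon|$ factor. For a non-crossing $\pi$, every pair is either internal to the $\alpha$-side, internal to the $\beta$-side, or equal to the outer pair $\xi_0 = \xi_{n+m+1}$; momentum ``recycling'' then lets one bound the full integral by $(C|\log\varepsilon|)^{\bar n}\|\Phi\|_2^2$ with no $\varepsilon$-gain.

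The new ingredient in the crossing case is that by definition there exists a pair $(\xi_k, \xi_{l(k)})$ with $k \le n$ and $l(k) \ge n+2$, whose delta $\delta(\xi_k - \xi_{k-1} + \xi_{l(k)} - \xi_{l(k)-1})$ couples a momentum difference on the $\alpha$-side to one on the $\beta$-side. Integrating so that this pair is handled last, I would invoke the crossing estimate of Section~5: for $d > \mathfrak{m}$, the convolution of two resolvents against $|r(\varepsilon\,\cdot)|^2$ concentrates on a region whose volume is $\varepsilon^\lambda$ smaller than in the non-crossing case, with $\lambda = \min(d - \mathfrak{m}, \mathfrak{m})$ as defined above. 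Combining this $\varepsilon^\lambda$ gain with the $(C|\log\varepsilon|)^{\bar n}$ from the remaining integrations and $\|\Phi\|_2^2$ from the initial-data weights yields the claimed bound. I expect the hard part to be this crossing estimate: unlike the single-variable resolvent bound it requires a joint-scale analysis of the two resolvents sharing the crossing $r(\varepsilon\,\cdot)$ factor, and one must distinguish the regimes $\mathfrak{m} < d \le 2\mathfrak{m}$ and $d > 2\mathfrak{m}$ to obtain the correct exponent $\lambda$; everything else is careful combinatorial bookkeeping of which resolvent is paired with which surviving momentum variable.
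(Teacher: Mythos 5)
Your overall strategy --- passing to absolute values in the $(\alpha,\beta)$ representation \eqref{cpi}, inserting the $F$-bound \eqref{F}, resolving the deltas of $\Delta_\pi$, and iterating a single-propagator bound of the type \eqref{5.2.4} to collect a $(C|\log\varepsilon|)^{\bar n}$ prefactor --- is indeed the paper's strategy. But the step you flag as hard, the ``crossing estimate,'' is precisely where the sketch goes vague, and the mechanism you describe does not match what actually produces $\varepsilon^\lambda$.

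The gain does not come from saving the crossing pair's delta for last, nor from a convolution of two resolvents against $|r(\varepsilon\cdot)|^2$. What the crossing buys is structural: in a non-crossing graph, summing all deltas in $\Delta_\pi$ forces $\xi_0=\xi_n$, whereas a crossing destroys this cancellation. The paper exploits that as follows. It picks the crossing pair $(\xi_Q,\xi_{l(Q)})$ with $Q$ maximal, sets $A'=A_0\setminus\{Q\}$, and rescales $\xi_k\to\xi_k/\varepsilon$ for $k\in A'\cup\{0\}$. Each $k\in A'\setminus\{k_0,k_1\}$ (with $n=l(k_0)$, $n+m+1=l(k_1)$) contributes $C|\log\varepsilon|$ by \eqref{5.2.4}; the $Q$-delta resolves $\xi_{l(Q)}$, and the $k_0$-delta, rewritten as $\delta(\xi_Q-\xi_{Q-1}+\xi_0-\xi_n+\cdots)$, resolves $\xi_Q$. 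The pivotal variable is $\xi_0$, which after rescaling carries two resolvents with argument $\xi_0^{\mathfrak m}/\varepsilon^{\mathfrak m}$ (one on the $\alpha$-side, one on the $\beta$-side), while $\xi_n$ is left unrescaled. The $\alpha$ and $\beta$ integrations via \eqref{5.2.1} then produce $\bigl[1+\log_+|\cdot|\bigr]^2\,/\,|\xi_0^{\mathfrak m}/\varepsilon^{\mathfrak m}-\xi_n^{\mathfrak m}+it^{-1}|^2$, and it is the single-variable $\xi_0$ integral of this squared rescaled resolvent against the weight $\varepsilon^{-\mathfrak m}\langle\xi_0\rangle^{-2d}$ that is bounded by $C\varepsilon^\lambda(\xi_n^\lambda\vee 1)$ via \eqref{5.1} and Proposition~5.6; the leftover $\xi_n^\lambda$ is absorbed by $\langle\xi_n\rangle^{-2d}$ to yield $\|\Phi\|_2^2$. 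So the $\varepsilon^\lambda$ comes from a one-momentum estimate on $\xi_0$ after the $\alpha,\beta$ integrations, not from a joint two-momentum analysis of the crossing pair, and you would need to replace your sketched crossing estimate by this bookkeeping for the argument to close.
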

\begin{proof}
Denote by $(\xi_{q_m},\xi_{l(q_m)})$, $1\leq m\leq M$, the crossing
pairs and define $Q=\max_m\{q_m\}$. Let us define
$A'=A_0\backslash\{Q\}$. From \eqref{F}, we have
\begin{equation}
|F(\bxi,n)\overline{F(\bzeta,m)}|\leq
|\Phi(\xi_n)||\Phi(\xi_{n+1})|\frac{1}{\langle \varepsilon\xi_0
\rangle^{2d}}\frac{1}{\langle \xi_n \rangle^{2d}}\prod_{k\in
A'}\frac{1}{\langle \varepsilon(\xi_k-\xi_{k-1}) \rangle^{2d}}.
\end{equation}
The terms $\frac{1}{|\alpha_k+\xi_k^{\mathfrak{m}}+it^{-1}|}$ for
$k\notin A'\cup\{0,n,n+1,n+m+1\}$ are bounded by $C$. This allows us
to obtain
\begin{equation}
\begin{aligned}
|C_{\pi}|\leq & C^{\bar{n}} \int d\alpha d\beta
\frac{1}{|\alpha+\xi_0^{\mathfrak{m}}+it^{-1}|}\frac{1}{|\beta+\xi_0^{\mathfrak{m}}+it^{-1}|}\frac{1}{\langle\varepsilon\xi_0\rangle^{2d}}\\
&\prod_{k\in A'}
\frac{1}{|\alpha_k+\xi_k^{\mathfrak{m}}+it^{-1}|}\frac{1}{\langle\varepsilon(\xi_k-\xi_{k-1})\rangle^{2d}}\delta(\xi_k-\xi_{k-1}+\xi_{l(k)}-\xi_{l(k)-1})\\
&\delta(\xi_Q-\xi_{Q-1}+\xi_{l(Q)}-\xi_{l(Q)-1})\\
&\frac{1}{|\alpha+\xi_n^{\mathfrak{m}}+it^{-1}|}\frac{1}{|\beta+\xi_n^{\mathfrak{m}}+it^{-1}|}
|\Phi(\xi_n)|^2 d\boldsymbol\xi_{n+m}.
\end{aligned}
\end{equation}
For each $k\in A'\cup \{0\}$, we perform the change of variables
$\xi_k\rightarrow\frac{\xi_k}{\varepsilon}$, and define
\begin{equation}
\xi_k^{\varepsilon}=\left\{
\begin{array}{ll}
\xi_k & k\ne A'\cup \{0\}\\
\frac{\xi_k}{\varepsilon} & k\in A'\cup \{0\}.
\end{array}
\right.
\end{equation}
We then find the estimate
\begin{equation}
\begin{aligned}
|C_{\pi}| \leq & C^{\bar{n}}\int d\alpha d\beta
\frac{1}{|\alpha+\xi_0^{\mathfrak{m}}/{\varepsilon^{\mathfrak{m}}}+it^{-1}|}\frac{1}{|\beta+\xi_0^{\mathfrak{m}}/{\varepsilon^{\mathfrak{m}}}+it^{-1}|}
\varepsilon^{-\mathfrak{m}}\frac{1}{\langle\xi_0\rangle^{2d}}\\
&\prod_{k\in
A'}\frac{1}{|\alpha_k+\xi_k^{\mathfrak{m}}/{\varepsilon^{\mathfrak{m}}}+it^{-1}|}\varepsilon^{-\mathfrak{m}}\frac{1}{\langle\xi_k-\varepsilon\xi_{k-1}^{\varepsilon}\rangle^{2d}}\\
&\delta(\frac{\xi_k}{\varepsilon}-\xi_{k-1}^{\varepsilon}+\xi_{l(k)}-\xi_{l(k)-1}^{\varepsilon})\delta(\xi_Q-\xi_{Q-1}+\xi_{l(Q)}-\xi_{l(Q)-1})\\
&\frac{1}{|\alpha+\xi_n^{\mathfrak{m}}+it^{-1}|}\frac{1}{|\beta+\xi_n^{\mathfrak{m}}+it^{-1}|}\frac{1}{\langle
\xi_{n}\rangle^{2d}}|\Phi(\xi_n)|^2 d\boldsymbol\xi_{n+m}.
\end{aligned}
\end{equation}
We now estimate the above product. Assume $Q<n$ and $n=l(k_0)$. When
$Q=n$ or $l(Q)=n+m+1$, the derivation of the same estimates is simpler and left to the reader.
Define $k_1$ such that $l(k_1)=n+m+1$. For each $k\in
A'(\mathfrak{\pi})\backslash(k_0\cup k_1)$, we use \eqref{5.2.4} below to
find the estimate
\begin{equation}
\varepsilon^{-\mathfrak{m}} \int
\frac{1}{\langle\xi_k-\varepsilon\xi_{k-1}^{\varepsilon}\rangle^{2d}}\frac{1}{|\alpha_k+\xi_k^{\mathfrak{m}}/{\varepsilon^{\mathfrak{m}}}+it^{-1}|}
\delta(\frac{\xi_k}{\varepsilon}-\varepsilon\xi_{k-1}^{\varepsilon}+\xi_{l(k)}-\xi_{l(k)-1}^{\varepsilon})d\xi_kd\xi_{l(k)}\leq
C|\log\varepsilon|.
\end{equation}
The integration in the $\xi_{l(Q)}$ variable is estimated by using
the above delta function. The delta function for $k=k_0\in
A'(\mathfrak{\pi})$ may be written in the form
$\delta(\xi_Q-\xi_{Q-1}+\xi_0-\xi_n+\sum_{m=1}^{M-1}\xi_{q_m}-\xi_{q_m-1})$,
and is thus used to integrate in the variable $\xi_Q$. The term
$\frac{1}{\langle\xi_{k_0}-\varepsilon\xi_{k_0-1}^{\varepsilon}\rangle^{2d}}$
is used to integrate in the variable $\xi_{k_0}$. The integral in
$\alpha$ and $\beta$ is estimated using \eqref{5.2.1} by
\begin{equation}
\int
\frac{1}{|\alpha+\xi_0^{\mathfrak{m}}/{\varepsilon^{\mathfrak{m}}}+it^{-1}|}\frac{1}{|\alpha+\xi_n^{\mathfrak{m}}+it^{-1}|}d\alpha
\leq
C\frac{1}{|\xi_0^{\mathfrak{m}}/{\varepsilon^{\mathfrak{m}}}-\xi_n^{\mathfrak{m}}+it^{-1}|}\left[1+\log_{+}\left|\frac{\xi_0^{\mathfrak{m}}/{\varepsilon^{\mathfrak{m}}}-\xi_n^{\mathfrak{m}}}{t^{-1}}\right|\right].
\end{equation}
The integral in $\xi_0$ is estimated using \eqref{5.1} by
\begin{equation}
\varepsilon^{-\mathfrak{m}}\int
\frac{1}{\langle\xi_0\rangle^{2d}}\frac{\left[1+\log_{+}\left|\frac{\xi_0^{\mathfrak{m}}/{\varepsilon^{\mathfrak{m}}}-\xi_n^{\mathfrak{m}}}{t^{-1}}\right|\right]^2}{|\xi_0^{\mathfrak{m}}/{\varepsilon^{\mathfrak{m}}}-\xi_n^{\mathfrak{m}}+it^{-1}|^2}d\xi_0
\leq C\varepsilon^{\lambda}(\xi_n^{\lambda}\vee 1).
\end{equation}
Following the usual convention, we use $a\vee b :=\max\{a,b\}$ and $a \wedge b:= \min\{a,b\}$.
The delta function
$\delta(\xi_{k_1}-\xi_{k_1-1}-\xi_{n+m+1}-\xi_{n+m})$ is seen to be
equivalent to $\delta(\xi_n-\xi_{n+1})$, which handles the
integration in the variable $\xi_{n+1}$. Finally we integrate in
$\xi_n$
\begin{equation}
\int \frac{1}{\langle\xi_n\rangle^{2d}}|(\xi_n^{\lambda}\vee
1)\Phi(\xi_n)|^2d\xi_n\leq \|\Phi(\xi)\|_2^2,
\end{equation}
and obtain
\begin{equation}
|C_{\pi}| \leq
(C|\log{\varepsilon}|)^{\bar{n}}\varepsilon^{\lambda}\|\Phi(\xi)\|_2^2.
\end{equation}
\end{proof}
\noindent Using Stirling's formula, we find that $|\Pi_c(n,m)|\leq
\frac{(2\bar{n}-1)!}{2^{\bar{n}-1}(\bar{n}-1)!}$ is bounded by
$\left(\frac{2\bar{n}}{e}\right)^{\bar{n}}$. After summation in
$n,m\leq n_0$, we obtain
\begin{equation}
\label{est2.1}
\begin{aligned}
\mathbb{E}\|\sum_{n=0}^{n_0}(\hat{u}_{n,\varepsilon}-U_{\varepsilon}^n)(t)\|_2^2
\leq
&\sum_{n=0}^{n_0-1}\sum_{m=0}^{n_0-1}(\frac{2\bar{n}}{e})^{\bar{n}}(C|\log\varepsilon|)^{\bar{n}}
\varepsilon^{\lambda}\|\Phi(\xi)\|_2^2\\
\leq &n_0^{n_0} (C|\log\varepsilon|)^{n_0}
\varepsilon^{\lambda}\|\Phi(\xi)\|_2^2 \lesssim
\varepsilon^{\lambda-3\gamma},
\end{aligned}
\end{equation}
where $a\lesssim b$ means $a\leq Cb$ for some $C>0$.

\noindent {\bf Analysis of the non-crossing graphs}
\begin{lemma}
If $\pi\in \Pi_{ncs}(n)$, we have
\begin{equation}
\label{2.2.1} |I_{\pi}|\leq
(C\log\varepsilon)^{\frac{n}{2}}\varepsilon^{\lambda(1-\delta)}|\Phi(\xi)|,
\end{equation}
where $0<\delta\ll 1$.
Moreover, if $\pi\in \Pi_{ncs}(n,m)$,
\begin{equation}
\label{2.2.2} |C_{\pi}|\leq
(C\log\varepsilon)^{\bar{n}}\varepsilon^{2\lambda(1-\delta)}\|\Phi(\xi)\|_2^2.
\end{equation}
\end{lemma}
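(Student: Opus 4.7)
The plan is to adapt the mechanism of Lemma 2.1, in which the factor $\varepsilon^\lambda$ is extracted from the $\xi_0$ integration made possible by a crossing, to the present non-crossing setting. The role of the crossing is played by the \emph{internal loop} created by an innermost non-simple pair.

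For $\pi\in\Pi_{ncs}(n)$, I would first pick an innermost non-simple pair $(k_\ast,l_\ast)$, i.e.\ a pair with $l_\ast>k_\ast+1$ such that every pair $(j,l(j))$ with $k_\ast<j<l(j)<l_\ast$ is simple. By non-crossing, the interior indices $\{k_\ast+1,\ldots,l_\ast-1\}$ are then partitioned into consecutive simple pairs $(k_\ast+2i-1,k_\ast+2i)$ for $i=1,\ldots,(l_\ast-k_\ast-1)/2$. The corresponding deltas $\delta(\xi_{k_\ast+2i}-\xi_{k_\ast+2i-2})$ cascade to give $\xi_{l_\ast-1}=\xi_{k_\ast}$, and the outer non-simple delta collapses to $\delta(\xi_{l_\ast}-\xi_{k_\ast-1})$. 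Using the Fourier representation (2.18), bounding $F$ via (2.23) with the two free placements chosen as $l_1=k_\ast$, $l_2=0$, and performing the change of variables $\xi_{k_\ast}\mapsto\xi_{k_\ast}/\varepsilon$, the integral over $\xi_{k_\ast}$ reproduces exactly the $\xi_0$-step of Lemma 2.1 --- propagator $|\alpha+\xi_{k_\ast}^{\mathfrak m}/\varepsilon^{\mathfrak m}+i\eta|^{-1}$, decay $\langle\xi_{k_\ast}\rangle^{-2d}$, and logarithmic contributions from an adjacent $\alpha$-integration of type (5.2.1). Estimate (5.1) then yields the key factor $\varepsilon^\lambda$; the $(1-\delta)$ loss is what is left in reserve for the remaining outer estimates. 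The odd interior momenta and the outer simple chain each contribute $C|\log\varepsilon|$ via (5.2.4), producing the bound (2.2.1).

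For $\pi\in\Pi_{ncs}(n,m)$, non-crossing decouples the pairing into independent matchings $\pi_L$ on $\{1,\ldots,n\}$ and $\pi_R$ on $\{n+2,\ldots,n+m+1\}$, linked only by $\delta(\xi_0-\xi_{n+m+1})$ and the time oscillation $e^{-i(\alpha-\beta)t}$, so schematically $|C_\pi|\lesssim\int|I_{\pi_L}(\xi_0)|\,|I_{\pi_R}(\xi_0)|\,d\xi_0$. When both $\pi_L$ and $\pi_R$ are non-simple, applying the previous argument on each side independently gives an $\varepsilon^{\lambda(1-\delta)}$ factor from each, which multiplied together yield (2.2.2) after integration against $|\Phi(\xi_0)|^2$. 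When only one side is non-simple, the first $\varepsilon^\lambda$ comes from its innermost non-simple pair as before, and the second arises from the joint $\alpha,\beta,\xi_0$ integration: the non-simple pair forces higher-order (repeated) propagator poles at the shared momentum $\xi_0$, so the Lemma 2.1-type combination of (5.2.1) and (5.1) applied to the outer $\xi_0$--$(\alpha-\beta)$ integral produces another $\varepsilon^\lambda$.

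The main obstacle is the bookkeeping of decay factors. Estimate (2.23) provides only two freely-placed weights $\langle\varepsilon\xi_{l_i}\rangle^{-2d}$, which for $C_\pi$ must support two separate $\varepsilon^\lambda$-extractions while leaving enough $\langle\varepsilon(\xi_k-\xi_{k-1})\rangle^{-2d}$ decay to close all remaining propagator and difference integrations; the two extractions can also interfere through shared propagator factors when both inner loops sit near the same on-shell momentum. The $(1-\delta)$ factor in the exponent of the claimed bound precisely records the small loss incurred in distributing the available decay between these competing integrations.
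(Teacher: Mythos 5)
Your treatment of $I_\pi$ is essentially the paper's mechanism: locate an innermost non-simple pair, let the interior simple-pair deltas cascade so that the propagators at $k_\ast$ and $l_\ast-1$ coincide, rescale $\xi_{k_\ast}\mapsto\xi_{k_\ast}/\varepsilon$, and apply \eqref{5.1.1} to the resulting squared propagator to extract $\varepsilon^{\lambda(1-\delta)}$. This is precisely what the paper does in its case (i), with your $(k_\ast,l_\ast)$ playing the role of $k_2$ and $l(k_2)$. The paper also writes out a case (ii) that proceeds via two adjacent left endpoints, but your observation that the \emph{shortest} non-simple pair automatically has an all-simple interior (every strictly nested pair is strictly shorter by non-crossing, hence simple by minimality) shows that the case-(i) mechanism always applies; in that sense your argument is cleaner and complete. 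One small correction: the logarithms that accompany the remaining simple pairs come from integrations of the type \eqref{5.2.4}, not from an $\alpha$-integration of type \eqref{5.2.1} (there is only one $\alpha$ for $I_\pi$, and the two squared-propagator poles sit at the same $\alpha$, so no log is generated there).

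Your decoupling $|C_\pi|\lesssim\int|I_{\pi_L}(\xi_0)|\,|I_{\pi_R}(\xi_0)|\,d\xi_0$ is exactly the paper's step. Where your proposal goes wrong is in the attempted fix for the subcase in which only one of $\pi_L,\pi_R$ is non-simple. The claimed mechanism --- ``higher-order propagator poles at the shared momentum $\xi_0$'' producing a second $\varepsilon^\lambda$ via \eqref{5.2.1} combined with \eqref{5.1} --- does not work: $\xi_0$ is never rescaled (it is not in $A_0$), so the propagators $|\alpha+\xi_0^{\mathfrak m}+i\eta|^{-1}$ and $|\beta+\xi_0^{\mathfrak m}+i\eta|^{-1}$ live at scale $O(1)$ and their integration yields $O(1)$, not $\varepsilon^\lambda$. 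With one side simple, the product bound only gives $\varepsilon^{\lambda(1-\delta)}$, not $\varepsilon^{2\lambda(1-\delta)}$. This is, however, immaterial: the lemma is applied in \eqref{est2.2} to $\|\sum_n(U_\varepsilon^n-U_{\varepsilon,s}^n)\|_2^2$, which after squaring involves only pairs $\pi=\pi_1\cup\pi_2$ with \emph{both} $\pi_1\in\Pi_{ncs}(n)$ and $\pi_2\in\Pi_{ncs}(m)$; graphs with one simple side never enter the sum. The paper's own proof is silent on this subcase, which is why it is harmless. Had you simply noted that the application only requires both sides non-simple, your proof would have been complete and fully correct.
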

\begin{proof}
 In a graph $\pi\in \Pi_{ncs}(n)$, the delta function
\begin{equation}
\label{ncsdelta}
\delta(\xi_0-\xi_n)
\end{equation}
 is obtained by adding up all the delta functions in $\Delta_{\pi}$. We perform the change of variables for all $k\in A_0$, $\xi_k\rightarrow
\frac{\xi_k}{\varepsilon}$ and define as before
\begin{equation}
\xi_k^{\varepsilon}=\left\{
\begin{array}{ll}
\xi_k & k\notin A_0\\
\frac{\xi_k}{\varepsilon} & k\in A_0.
\end{array}
\right.
\end{equation}
We shall solve the following two cases in different ways.

(i) If there exists $k_2\in A_0$, such that for all $k$ satisfying
$k_2+1\leq k\leq l(k_2)-2, k\in A_0$, $(\xi_k,\xi_{l(k)})$ are
simple pairs, then the delta function
$\delta(\frac{\xi_{k_2}}{\varepsilon}-\xi_{l(k_2)-1})$ is present.
From \eqref{F}, we have
\begin{equation}
\label{ncsF} F(\boldsymbol\xi,n)\leq
C^n|\Phi(\xi_n)|\frac{1}{\langle
\xi_{k_2}\rangle^{2d}}\frac{1}{\langle \xi_n \rangle^{2d}}\prod_{k\in
A_0}\frac{1}{\langle\xi_k-\varepsilon\xi_{k-1}^{\varepsilon}
\rangle^{2d}}.
\end{equation}
The estimate of integration in
$\xi_{k_2}$ is then obtained by using \eqref{5.1} below:
\begin{equation}
\begin{aligned}
\int\varepsilon^{-\mathfrak{m}}\frac{1}{\langle\xi_{k_2}\rangle^{2d}}\frac{1}{|\alpha+\xi_{k_2}^{\mathfrak{m}}/\varepsilon^{\mathfrak{m}}+it^{-1}|^2}d\xi_{k_2}
\leq C\varepsilon^{\lambda(1-\delta)}(\alpha^{\frac{\lambda(1-\delta)}{\mathfrak{m}}}\vee
1).
\end{aligned}
\end{equation}
The delta function in which $\xi_n$ is involved is equivalent to
$\delta(\xi_n-\xi_0)$, which we use to integrate in $\xi_n$:
\begin{equation}
\label{deltan} \int
\frac{1}{\langle\xi_n\rangle^{2d}}\frac{1}{|\alpha+\xi_n^{\mathfrak{m}}+it^{-1}|}\delta(\xi_n-\xi_0)d\xi_n\leq
\frac{1}{\langle\xi_0\rangle^{2d}}\frac{1}{|\alpha+\xi_0^{\mathfrak{m}}+it^{-1}|}.
\end{equation}
For $k\in A_0, k\ne k_2$, we have the estimate
\begin{equation}
\label{simple} \int
\varepsilon^{-\mathfrak{m}}\frac{1}{\langle\xi_k-\varepsilon\xi_{k-1}^\varepsilon\rangle^{2d}}\frac{1}{|\alpha+\xi_k^{\mathfrak{m}}/{\varepsilon^{\mathfrak{m}}}+it^{-1}|}\delta(\frac{\xi_k}{\varepsilon}-\xi_{k-1}^{\varepsilon}+\xi_{l(k)}-\xi_{l(k)-1}^{\varepsilon})d\xi_kd\xi_{l(k)}\leq
C|\log\varepsilon|.
\end{equation}
The estimate of integration in the variable $\alpha$ is then given by
\begin{equation}
\int\frac{\alpha^{\frac{\lambda(1-\delta)}{\mathfrak{m}}}\vee
1}{|\alpha+\xi_0^{\mathfrak{m}}+it^{-1}|^2}\leq C
\xi_0^{\mathfrak{\lambda(1-\delta)}}.
\end{equation}
The extra term $\xi_0^{\lambda(1-\delta)}$ that arises in the last estimate can be
canceled by the term ${1}/{\langle\xi_0\rangle^{2d}}$ in
\eqref{deltan},
which concludes \eqref{2.2.1}.

(ii) If there exists no such $k_2\in A_0$ satisfying the condition
in case (i), then we first delete all simple pairs that exist in the
graph $\pi$.
In fact, the simple pairs can be handled first by using the bound as
in \eqref{simple}. Therefore without loss of generality, we need
only to consider a graph $\pi$ with no simple pair. Let us define
$k_4=\displaystyle\min\{k|k\in A_0,l(k)-1\in A_0\}$, and
$k_5=l(k_4)-1$. Note that $k_5\geq k_4+1$.

We have from \eqref{F} that
\begin{equation}
\label{ncsF2} F(\boldsymbol\xi,n)\leq
C^n|\Phi(\xi_n)|\frac{1}{\langle
\xi_{l(k_4)}\rangle^{2d}}\frac{1}{\langle \xi_{k_5}
\rangle^{2d}}\frac{1}{\langle \xi_n \rangle^{2d}}\prod_{k\in
A_0}\frac{1}{\langle\xi_k-\varepsilon\xi_{k-1}^{\varepsilon}
\rangle^{2d}}.
\end{equation}
The integration in $\xi_{l(k_4)}$ provides the terms which we will
need for integration in $\xi_{k_5}$
\begin{equation}
\begin{aligned}
&\int\frac{1}{\langle\varepsilon\xi_{l(k_4)}\rangle^{2d}}\frac{1}{|\alpha+\xi_{l(k_4)}^{\mathfrak{m}}+it^{-1}|}\delta(\frac{\xi_{k_4}}{\varepsilon}-\xi_{k_4-1}^{\varepsilon}+\xi_{l(k_4)}-\frac{\xi_{k_5}}{\varepsilon})d\xi_{l(k_4)}\\
=&\frac{1}{\langle\xi_{k_5}-\xi_{k_4}+\varepsilon\xi_{k_4-1}^{\varepsilon}\rangle^{2d}}\frac{1}{|\alpha+|\xi_{k_5}-\xi_{k_4}+\varepsilon\xi_{k_4-1}^{\varepsilon}|^{\mathfrak{m}}/{\varepsilon^{\mathfrak{m}}}+it^{-1}|}.
\end{aligned}
\end{equation}
We can estimate the integration in $\xi_{k_5}$ using \eqref{5.1}
\begin{equation}
\begin{aligned}
\int
&\varepsilon^{-\mathfrak{m}}\frac{1}{\langle\xi_{k_5}\rangle^{2d}}\frac{1}{\langle\xi_{k_5}-\xi_{k_4}+\varepsilon\xi_{k_4-1}^{\varepsilon}\rangle^{2d}}
\frac{1}{|\alpha+\xi_{k_5}^{\mathfrak{m}}/{\varepsilon^{\mathfrak{m}}}+it^{-1}|}\\
&\frac{1}{|\alpha+|\xi_{k_5}-\xi_{k_4}+\varepsilon\xi_{k_4-1}^{\varepsilon}|^{\mathfrak{m}}/{\varepsilon^{\mathfrak{m}}}+it^{-1}|}d\xi_{k_5}\leq
C\varepsilon^{\lambda(1-\delta)}(\alpha^{\frac{\lambda(1-\delta)}{\mathfrak{m}}}\vee 1).
\end{aligned}
\end{equation}
All the other integrations are handled the same way as in case (i).
In order to make sure that no
integration above is affected by other integrands we plan to use for integrating in other variables, we just need to first integrate in $\xi_{l(k)}$ with index
in decreasing order and then integrate in $\xi_k$ with index in decreasing order. This gives \eqref{2.2.1}.\\
If $\pi\in\Pi_{ncs}(n,m)$, we may denote the pairings for $k\leq n$
and for $n+1\leq k \leq n+m+1$ by $\pi_1$ and $\pi_2$ and then
$\pi=\pi_1\cup\pi_2$, since there is no crossing in $\pi$. Hence it
follows that
\begin{equation}
|C_{\pi}|\leq \int |I_{\pi_1}(\xi) I_{\pi_2}(\xi)| d\xi \leq
(C\log\varepsilon)^{\bar{n}}\varepsilon^{2\lambda(1-\delta)}\|\Phi(\xi)\|_2^2.
\end{equation}
\end{proof}
Similarly to \eqref{est2.1}, we obtain
\begin{equation}
\label{est2.2}
\begin{aligned}
\|\sum_{n=0}^{n_0-1}(U_{\varepsilon}^n-U_{\varepsilon,s}^n)(t)\|_2^2\leq
&\sum_{n=0}^{n_0-1}\sum_{m=0}^{n_0-1}(\frac{2\bar{n}}{e})^{\bar{n}}(C|\log\varepsilon|)^{\bar{n}}
\varepsilon^{2\lambda(1-\delta)}\|\Phi(\xi)\|_2^2\\
\leq &n_0^{n_0} (C|\log\varepsilon|)^{n_0}
\varepsilon^{2\lambda(1-\delta)}\|\Phi(\xi)\|_2^2 \lesssim
\varepsilon^{2\lambda(1-\delta)-3\gamma},
\end{aligned}
\end{equation}
where
\begin{equation}
\label{simpledef}
U_{\varepsilon,s}^{n}:=I_{\pi}
\end{equation}
for $\pi\in\Pi_s(n)$.\\
Collecting the results obtained in \eqref{est2.1} and
\eqref{est2.2}, we have shown that
\begin{equation}
\label{est2.2.2}
\mathbb{E}\|\sum_{n=0}^{n_0-1}(\hat{u}_{\varepsilon,n}-U_{\varepsilon,s}^n)(t)\|_2^2\lesssim
\varepsilon^{{2\lambda(1-\delta)-3\gamma}}.
\end{equation}

\noindent {\bf Analysis of the simple Graphs}
\begin{lemma}
If $\pi\in \Pi_s(n)$, we have
\begin{equation}
\label{2.3.1} |I_{\pi}|\leq
(\frac{C^{n}}{(n/2)!}+O(C^{n}\varepsilon^{\mathfrak{m}/2}))|\hat{u}_0(\xi)|.
\end{equation}
Moreover, if $\pi\in \Pi_s(n,m)$, we have
\begin{equation}
\label{2.3.2} |C_{\pi}|\leq
(\frac{C^{n+m}}{(n/2)!(m/2)!}+O(C^{n+m}\varepsilon^{\mathfrak{m}/2}))\|\hat{u}_0(\xi)\|_2^2.
\end{equation}
\end{lemma}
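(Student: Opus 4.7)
The plan is that, unlike the crossing and non-crossing non-simple cases, the simple pairing completely decouples the integral so that $I_\pi$ reduces to a one-dimensional contour integral whose residue produces the factorial. For $\pi\in\Pi_s(n)$ with $n=2\bar n$, the $\bar n$ pairs $(\xi_{2j-1},\xi_{2j})$ yield delta functions $\delta(\xi_{2j}-\xi_{2j-2})$ that, taken together, force $\xi_0=\xi_2=\cdots=\xi_n$. Integrating out $\xi_2,\xi_4,\ldots,\xi_n$ against these delta functions, and using the evenness of $r$, each pair of $r$-factors combines into $\hat R(\varepsilon(\xi_{2j-1}-\xi_0))$; the even propagators collapse to $(\alpha+\xi_0^{\mathfrak m}+i\eta)^{-(\bar n+1)}$, while the odd ones remain as $\prod_j(\alpha+\xi_{2j-1}^{\mathfrak m}+i\eta)^{-1}$. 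The remaining integral factorizes over the odd indices, so that after absorbing the $\varepsilon^{d-\mathfrak m}$ per pair from $\Delta_\pi$ and rescaling $\xi_{2j-1}=\xi_0+q/\varepsilon$,
\begin{equation*}
I_\pi=-i^{n+1}e^{t\eta}\hat u_0(\xi_0)\int d\alpha\,\frac{e^{-i\alpha t}\sigma_\varepsilon(\alpha;\xi_0)^{\bar n}}{(\alpha+\xi_0^{\mathfrak m}+i\eta)^{\bar n+1}},\quad
\sigma_\varepsilon(\alpha;\xi_0):=\int\frac{\hat R(q)\,dq}{\varepsilon^{\mathfrak m}(\alpha+i\eta)+|\varepsilon\xi_0+q|^{\mathfrak m}}.
\end{equation*}

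I would then show that $\sigma_\varepsilon(\alpha;\xi_0)\to\rho$ with a uniform error. Since $d>\mathfrak m$ and $\hat R\in\mathcal S$, $\rho<\infty$ and the denominator regularization is harmless away from the origin; splitting $q\in\{|q|\leq\varepsilon^{1/2}\}$ (where the ball volume together with the $d>\mathfrak m$ integrability of $\hat R(q)/|q|^{\mathfrak m}$ gives a small contribution) against $\{|q|>\varepsilon^{1/2}\}$ (where the denominator differs from $|q|^{\mathfrak m}$ by $O(\varepsilon^{\mathfrak m/2})$) yields $|\sigma_\varepsilon(\alpha;\xi_0)|\leq 2\rho$ and $|\sigma_\varepsilon(\alpha;\xi_0)-\rho|\leq C\varepsilon^{\mathfrak m/2}$ uniformly in $\alpha$ and $\xi_0$. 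Factoring
\begin{equation*}
\sigma_\varepsilon^{\bar n}-\rho^{\bar n}=(\sigma_\varepsilon-\rho)\sum_{j=0}^{\bar n-1}\sigma_\varepsilon^{\bar n-1-j}\rho^j
\end{equation*}
isolates the main term, and the $\rho^{\bar n}$ piece of the $\alpha$-integral is evaluated by closing the contour in the lower half plane and reading the residue at the order-$(\bar n+1)$ pole $\alpha=-\xi_0^{\mathfrak m}-i\eta$:
\begin{equation*}
\int d\alpha\,\frac{e^{-i\alpha t}}{(\alpha+\xi_0^{\mathfrak m}+i\eta)^{\bar n+1}}=-2\pi i\,\frac{(-it)^{\bar n}}{\bar n!}\,e^{i\xi_0^{\mathfrak m}t}e^{-\eta t}.
\end{equation*}
Combining the $\rho^{\bar n}t^{\bar n}/\bar n!$ main term with $\hat u_0(\xi_0)$ produces the $C^n/(n/2)!$ contribution to \eqref{2.3.1}, while the correction from $\sigma_\varepsilon^{\bar n}-\rho^{\bar n}$ contributes $O(C^n\varepsilon^{\mathfrak m/2})|\hat u_0(\xi_0)|$ by the same residue estimate.

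For \eqref{2.3.2}, observe that a simple pair $l(k)=k+1$ cannot link the $\bxi$-side (indices $1,\ldots,n$) to the $\bzeta$-side (indices $n+2,\ldots,n+m+1$), since the index $n+1$ carries no scattering factor. Hence every $\pi\in\Pi_s(n,m)$ splits as $\pi=\pi_1\cup\pi_2$ with $\pi_1\in\Pi_s(n)$ and $\pi_2\in\Pi_s(m)$, so
\begin{equation*}
C_\pi=\int I_{\pi_1}(\xi)\,\overline{I_{\pi_2}(\xi)}\,d\xi.
\end{equation*}
Substituting \eqref{2.3.1} into each factor and using $|\hat u_0(\xi)|\leq|\Phi(\xi)|$ yields \eqref{2.3.2}.

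The main difficulty I foresee is the uniform approximation $|\sigma_\varepsilon(\alpha;\xi_0)-\rho|=O(\varepsilon^{\mathfrak m/2})$: the denominator in $\sigma_\varepsilon$ becomes small either near $q=0$ (handled by $d>\mathfrak m$) or when $\varepsilon^{\mathfrak m}|\alpha|$ is comparable to $|q|^{\mathfrak m}$ (for large $\alpha$), so the splitting radius must be chosen carefully to produce a bound that is simultaneously uniform in $\alpha$ and small enough to be absorbed, together with the $1/\bar n!$ residue factor, into the stated $O(C^n\varepsilon^{\mathfrak m/2})$ error.
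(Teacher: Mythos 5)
Your reduction of $I_\pi$ — collapsing the $B_0$-variables with the delta functions, rescaling the $A_0$-variables, and writing the result as a power of a scalar integral against $(\alpha+\xi_0^{\mathfrak m}+i\eta)^{-(\bar n+1)}$ — is exactly the paper's reduction (your $\sigma_\varepsilon$ is their $\Theta_{\alpha,\eta}(\xi_0)$), and your residue computation for the main term plays the same role as their appeal to Lemma 5.1 (the bound $|K|\leq t^{\bar n}/\bar n!$). The splitting of $\pi\in\Pi_s(n,m)$ into $\pi_1\cup\pi_2$ is also exactly how the paper deduces \eqref{2.3.2}.

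The genuine gap is the step you yourself flag: the asserted uniform bound $|\sigma_\varepsilon(\alpha;\xi_0)-\rho|\leq C\varepsilon^{\mathfrak m/2}$. This cannot hold uniformly in $\xi_0$: the integrand involves the shift $|\varepsilon\xi_0+q|^{\mathfrak m}$, and once $|\xi_0|\gtrsim\varepsilon^{-1}$ the shift is $O(1)$ and $\sigma_\varepsilon$ differs from $\rho$ by $O(1)$, not $O(\varepsilon^{\mathfrak m/2})$. It also fails uniformly in $\alpha$, since for $|\alpha|\gtrsim\varepsilon^{-\mathfrak m}$ the regularizer $\varepsilon^{\mathfrak m}\alpha$ is no longer negligible. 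The paper avoids both problems at once: instead of comparing $\Theta_{\alpha,\eta}(\xi_0)$ to the constant $\rho$, it compares to the $\xi_0$- (and $\varepsilon$-) dependent quantity $\Theta(\xi_0)=\lim_{\eta\to 0}\Theta_{\xi_0^{\mathfrak m},\eta}(\xi_0)$, for which Lemma 5.8(ii) gives the non-uniform bound
\begin{equation*}
|\Theta_{\alpha,\eta}(\xi_0)-\Theta(\xi_0)|\leq \varepsilon^{\mathfrak m/2}\bigl(|\alpha+\xi_0^{\mathfrak m}|\,|\eta|^{-1/2}+|\eta|^{1/2}\bigr),
\end{equation*}
whose linear growth in $|\alpha+\xi_0^{\mathfrak m}|$ is then absorbed by the factor $|\alpha+\xi_0^{\mathfrak m}+i\eta|^{-(\bar n+1)}$ in the $\alpha$-integral (using $\bar n\geq 1$). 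The leading term only needs the uniform bound $|\Theta(\xi_0)|\leq C$ from Lemma 5.8(i); there is no need (and at this stage no correct way) to identify the coefficient as the constant $\rho$ — that identification and its convergence rate are deferred to Section 4, where the $\xi_0$-dependence of $\rho_\varepsilon(\xi_0)$ is handled with different weights. To repair your argument, compare $\sigma_\varepsilon(\alpha;\xi_0)$ to $\sigma_\varepsilon(\xi_0^{\mathfrak m};\xi_0)$ rather than to $\rho$, and allow the error estimate to grow with $|\alpha+\xi_0^{\mathfrak m}|$ before integrating.

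One smaller remark: the exponent $\mathfrak m/2$ is not coming from your radius-$\varepsilon^{1/2}$ cutoff — that cutoff alone gives an inner contribution $O(\varepsilon^{(d-\mathfrak m)/2})$, which for $\mathfrak m<d<2\mathfrak m$ is worse than $\varepsilon^{\mathfrak m/2}$. The $\mathfrak m/2$ in the paper comes from the H\"older-type estimate in Lemma 5.8(ii) with its $\eta^{-(1-1/\mathfrak m)}$ factor, not from a naive splitting of the $q$-integral.
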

\begin{proof}
In the case of a simple graph $\pi$, we can explicitly write out the
product of delta functions
\begin{equation}
\Delta_{\pi}=\prod_{k\in A_0}\delta(\xi_{k-1}-\xi_{k+1}),
\end{equation}
which is independent of $\xi_k$ for all $k\in A_0$, and forces
$\xi_k=\xi_0$ for all $k\notin A_0$.\\
Integrating in $\xi_k$ for all $k\in B_0$ using delta functions, we
obtain
\begin{equation}
I_{\pi}=\int
K(t,\xi_0,\cdots,\xi_0,\xi_{k_{a_1}},\cdots,\xi_{k_{a_{n/2}}})\prod_{k\in
A_0}
\varepsilon^{d-\mathfrak{m}}\hat{R}(\varepsilon(\xi_k-\xi_{0}))\hat{u}_0(\xi_0)
d\boldsymbol\xi_{A_0},
\end{equation}
where $\{k_{a_1},\cdots,k_{a_{n/2}}\}=A_0$.\\
This implies
\begin{equation}
\label{simple} |I_{\pi}|\leq \int d\alpha
\frac{1}{|\alpha+\xi_0^{\mathfrak{m}}+i\eta|^{n/2+1}}\prod_{k\in
A_0}\left|\int\frac{\varepsilon^{d-\mathfrak{m}}\hat{R}(\varepsilon(\xi_k-\xi_{0}))}{\alpha+\xi_k^{\mathfrak{m}}+i\eta}d\xi_k\right||\hat{u}_0(\xi_0)|.
\end{equation}
Define
\begin{equation}
\Theta_{\alpha,\eta}(\xi_0)=\int
\frac{\varepsilon^{d-\mathfrak{m}}\hat{R}(\varepsilon(\xi_k-\xi_{0}))}{\alpha+\xi_k^{\mathfrak{m}}+i\eta}d\xi_k.
\end{equation}
Perform the change of variable $\xi_k\rightarrow
\frac{\xi_k}{\varepsilon}$. This gives
\begin{equation}
\Theta_{\alpha,\eta}(\xi_0)=\int
\frac{\hat{R}(\xi_k-\varepsilon\xi_{0})}{\varepsilon^{\mathfrak{m}}\alpha+\xi_k^{\mathfrak{m}}+i\varepsilon^{\mathfrak{m}}\eta}d\xi_k.
\end{equation}
It is clear from Lemma 5.8 that
\begin{equation}
\label{Theta} |\Theta_{\alpha,\eta}(\xi_{0})|\leq C.
\end{equation}
Thus \eqref{simple} already implies
\begin{equation}
|I_{\pi}|\leq C^{n}|\hat{u}_0(\xi)|.
\end{equation}
However, this estimate is not sufficient and there is in fact a term
$1/{(n/2)!}$ missing. We now recover this factor.

Introduce the notation
\begin{equation}
\Theta(\xi)=\lim_{\eta\rightarrow 0}
\Theta_{\xi^{\mathfrak{m}},\eta}(\xi).
\end{equation}
From the estimate in \eqref{5.4.2}, we have
\begin{equation}
\label{ThetaErr} |\Theta_{\alpha,\eta}(\xi_0)-\Theta(\xi_k)|\leq
\varepsilon^{\mathfrak{m}/2}(|\alpha+\xi_{0}^{\mathfrak{m}}||\eta|^{-1/2}+|\eta|^{1/2}).
\end{equation}
We shall show that the leading term of $I_{\pi}$ is
\begin{equation}
\label{SimpleLead}
K(t,\xi_0,\cdots,\xi_0)\Theta(\xi_0)^{n/2}\hat{u}_0(\xi_0).
\end{equation}
In fact, the error term is bounded by
\begin{equation}
\label{SimpleErr} \int d\alpha
\frac{1}{|\alpha+\xi_0^{\mathfrak{m}}+i\eta|^{n/2+1}}|\Theta_{\alpha,\eta}^{n/2}(\xi_0)-\Theta^{n/2}(\xi_0)||\hat{u}_0(\xi_0)|.
\end{equation}
Using the uniform bound on $\Theta$ in \eqref{Theta} and the
estimate in \eqref{ThetaErr}, we can bound \eqref{SimpleErr}
by $O(\varepsilon^{\mathfrak{m}/2}C^n|\hat{u}_0(\xi_0)|)$.

We can now use Lemma 5.1 to bound \eqref{SimpleLead} by
\begin{equation}
\frac{C^{n}}{(n/2)!}|\hat{u}_0(\xi_0)|.
\end{equation}
Finally, we obtain \eqref{2.3.2} as an immediate consequence of \eqref{2.3.1}. This concludes
Lemma 2.3.
\end{proof}

\section{Partial Time Integration}
In this section, we estimate the $L^2$ norm of $\Psi_{n_0,\varepsilon}$. The central idea is to subdivide the time integration into
smaller time intervals of size $t/\kappa(\varepsilon)$ with
\begin{equation}
\kappa(\varepsilon):=|\log\varepsilon|^{1/\gamma^2}.
\end{equation}
We then use the Duhamel formula to estimate the evolution in each
time interval.\\
Recall the error term
$\Psi_{n_0,\varepsilon}=\sum_{n=n_0}^{+\infty}
u_{n,\varepsilon}$. The Duhamel formula states that
\begin{equation}
\Psi_{n_0,\varepsilon}(t)=(-i)\frac{1}{\varepsilon^{\mathfrak{m}/2}}\int_0^t
ds e^{i(t-s)H}q(\frac{x}{\varepsilon})u_{n_0-1,\varepsilon}(s),
\end{equation}
where $e^{itH}$ denotes the propagator of equation (1.1).\\
Let $\theta_j=jt/{\kappa}$ for $j=0,1,\cdots,\kappa$. Rewrite
\begin{equation}
\label{partial}
\Psi_{n_0,\varepsilon}(t)=(-i)\frac{1}{\varepsilon^{\mathfrak{m}/2}}\displaystyle\sum_{j=0}^{\kappa-1}e^{i(t-\theta_{j+1})H}\int_{\theta_{j}}^{\theta_{j+1}}e^{i(\theta_{j+1}-s)H}q(\frac{x}{\varepsilon})u_{n_0-1,\varepsilon}(s)ds.
\end{equation}
Define the n-th term of the Duhamel expansion for the operator
$e^{i(\theta_{j+1}-s)H}$ in \eqref{partial} as
\begin{equation}
\begin{aligned}
u_{n,n_0,\theta_j}=&(-i)^{n-n_0+1}(\frac{1}{\varepsilon^{\mathfrak{m}/2}})^{n-n_0+1}\int_{\theta_j}^{\theta_{j+1}}\int_0^t\cdots\int_0^t
(\prod_{k=0}^{n-n_0}ds_k)\delta(\theta_{j+1}-s-\sum_{k=0}^{n-n_0}s_k)\\
&e^{is_0H_0}q(\frac{x}{\varepsilon})\cdots
q(\frac{x}{\varepsilon})e^{is_{n-n_0+1}H_0}u_{n_0-1,\varepsilon}(s)ds.
\end{aligned}
\end{equation}
We may further obtain the form of $u_{n,n_0,\theta_j}$ in terms of
$u_0$ by writing $u_{n_0-1,\varepsilon}(s)$ out explicitly using
\eqref{Duhamel}
\begin{equation}
\begin{aligned}
u_{n,n_0,\theta_j}=&(-i)^{n}(\frac{1}{\varepsilon^{\mathfrak{m}/2}})^{n}\int_{\theta_j}^{\theta_{j+1}}\int_0^t\cdots\int_0^t(\prod_{k=0}^{n}ds_k)\\
&\delta(\theta_{j+1}-s-\sum_{k=0}^{n-n_0}s_k)\delta(s-\sum_{k=n-n_0+1}^{n}s_k)e^{is_0H_0}q(\frac{x}{\varepsilon})\cdots
q(\frac{x}{\varepsilon})e^{is_nH_0}u_0ds.
\end{aligned}
\end{equation}
The amputated versions of these functions are defined as
\begin{equation}
\tilde{u}_{4n_0,n_0,\theta_j}(x)=
\frac{1}{\varepsilon^{\mathfrak{m}/2}}q(\frac{x}{\varepsilon})u_{4n_0-1,n_0,\theta_j}(x).
\end{equation}
Then Duhamel formula then gives
\begin{equation}
\Psi_{n_0,\varepsilon}=U_1+U_2,
\end{equation}
where
\begin{equation}
\begin{aligned}
&U_1(t)=\sum_{n_0\leq n <4n_0}\sum_{j=0}^{\kappa-1} e^{i(t-\theta_{j+1})H}u_{n,n_0,\theta_j}(\theta_{j+1}),\\
&U_2(t)=(-i)\sum_{j=0}^{\kappa-1}e^{i(t-\theta_{j+1})H}\int_{\theta_j}^{\theta_{j+1}}\tilde{u}_{4n_0,n_0,\theta_j}(s)ds.
\end{aligned}
\end{equation}
From the unitarity of $e^{i(t-\theta_{j+1})H}$ and the triangle
inequality, we can bound $U_1$ by
\begin{equation}
\label{u1} \|U_1\|_2^2\leq C n_0 \kappa \displaystyle\sum_{n_0 \leq
n <
4n_0}\sum_{j=0}^{\kappa-1}\|u_{n,n_0,\theta_j}(\theta_{j+1})\|_2^2
\leq C n_0^2\kappa^2\sup_{n_0\leq n<4n_0}
\|u_{n,n_0,\theta_j}\|_2^2.
\end{equation}
Applying the Cauchy-Schwarz inequality, we can bound $U_2$ by
\begin{equation}
\label{u2} \|U_2\|_2^2 \leq t \sum_{j=0}^{\kappa-1}
\int_{\theta_j}^{\theta_{j+1}}\|\tilde{u}_{4n_0,n_0,\theta_j}(s)\|_2^2
ds \leq t^2\sup_j \sup_{\theta_j\leq s\leq \theta{j+1}}
\|\tilde{u}_{4n_0,n_0,\theta_j}(s)\|_2^2.
\end{equation}
Denote
\begin{equation}
I_{n-n_0+1,n}:=\{n-n_0+1,\cdots,n\}.
\end{equation}
Define the free evolution operator with constraint given by the
parameters $n_0$ and $\theta$ as
\begin{equation}
K^{\#}(\theta_{j+1},\theta_j;\bxi,n,n_0):=\int_{\theta_j}^{\theta_{j+1}}
K(\theta_{j+1}-s;\bxi,n-n_0)K(s,\bxi,I_{n-n_0+1,n})ds.
\end{equation}
We can write the wave function in Fourier space
$\hat{u}_{n,n_0,\theta_j}$ as
\begin{equation}
\hat{u}_{n,n_0,\theta_j}(\theta_{j+1},\xi_0)=\int
K^{\#}(\theta_{j+1},\theta_j;\bxi,n,n_0) L(\bxi,n)
\hat{u}_0(\xi_n)d\boldsymbol\xi_{n,\hat{0}}.
\end{equation}
We then write
\begin{equation}
\mathbb{E}\|u_{n,n_0,\theta_j}(\theta_{j+1})\|^2 = \sum_{\pi\in \Pi(n,n)}
C_{\pi}^{\#},
\end{equation}
where
\begin{equation}
C_{\pi}^{\#}:=\int d\boldsymbol\xi d\boldsymbol\zeta
K^{\#}(\theta_{j+1},\theta_j;\xi,n,n_0)
\overline{K^{\#}(\theta_{j+1},\theta_j;\zeta,n,n_0)} \Delta_{\pi}
F(\xi,n) \overline{F(\zeta,n)}.
\end{equation}
For the amputated function, we have
\begin{equation}
\mathbb{E}\|\tilde{u}_{n,n_0,\theta_j}(\theta_{j+1})\|^2 =
\sum_{\pi\in \Pi(n,n)} \tilde{C}_{\pi}^{\#},
\end{equation}
\begin{equation}
\tilde{C}_{\pi}^{\#}:=\int d\boldsymbol\xi d\boldsymbol\zeta
\tilde{K}^{\#}(\theta_{j+1},\theta_j;\bxi,n,n_0)
\overline{\tilde{K}^{\#}(\theta_{j+1},\theta_j;\bzeta,n,n_0)}
\Delta_{\pi} F(\bxi,n) \overline{F(\bzeta,n)},
\end{equation}
where
\begin{equation}
\tilde{K}^{\#}(\theta_{j+1},\theta_j;\bxi,n,n_0):=\int_{\theta_j}^{\theta_{j+1}}
K(\theta_{j+1}-s;\bxi,I_{1,n-n_0})K(s,\bxi,I_{n-n_0+1,n})ds.
\end{equation}
Recall Lemma 5.3. We can extend it to the following identity for
$K^{\#}(\theta_{j+1},\theta_j;\bxi,n,n_0)$:
\begin{equation}
\begin{aligned}
K^{\#}(\theta_{j+1},\theta_j;{\bxi},n,n_0)=&-\int_{\theta_j}^{\theta_{j+1}}ds e^{(\theta_{j+1}-s)\eta} e^{s\tilde{\eta}} \int_{-\infty}^{+\infty} d\alpha d\tilde{\alpha} e^{-i\alpha(\theta_{j+1}-s)} e^{-i\tilde{\alpha}s} \\
&\times\displaystyle\prod_{k=0}^{n-n_0}\frac{1}{\alpha+\xi_k^{\mathfrak{m}}+i\eta}\prod_{k\in
I_{n-n_0+1,n}}\frac{1}{\tilde{\alpha}+\xi_k^{\mathfrak{m}}+i\tilde{\eta}},
\end{aligned}
\end{equation}
where we choose $\eta:=(t/{\kappa})^{-1}$,$\tilde{\eta}:=t^{-1}$. We
can integrate  in $s$ to have
\begin{equation}
\begin{aligned}
K^{\#}(\theta_{j+1},\theta_j;{
\bxi},n,n_0)=&i\int_{-\infty}^{+\infty} d\alpha d{\tilde{\alpha}}
\frac{e^{-i\theta_{j+1}\tilde{\alpha}+\theta_{j+1}\tilde{\eta}}-e^{-i\alpha(\theta_{j+1}-\theta_j)-i\theta_j\tilde{\alpha}
+(\theta_{j+1}-\theta_j)\eta+\theta_{j}\tilde{\eta}}}{\alpha-\tilde{\alpha}+i(\eta-\tilde{\eta})} \\
&\times\displaystyle\prod_{k=0}^{n-n_0}\frac{1}{\alpha+\xi_k^{\mathfrak{m}}+i\eta}\prod_{k\in
I_{n-n_0+1,n}}\frac{1}{\tilde{\alpha}+\xi_k^{\mathfrak{m}}+i\tilde{\eta}}.
\end{aligned}
\end{equation}
Hence we can bound $C_\pi^{\#}$ by
\begin{equation}
\label{CP}
\begin{aligned}
|C_{\pi}^{\#}|\leq &\int d\xi \int_{-\infty}^{+\infty} d\alpha d\tilde{\alpha} d\beta d\tilde{\beta} \frac{1}{|\alpha-\tilde{\alpha}+i(\eta-\tilde{\eta})|} \frac{1}{|\beta-\tilde{\beta}+i(\eta-\tilde{\eta})|}\\
&\times\displaystyle\prod_{0\leq k \leq 2n+1}
\frac{1}{|\alpha_k+\xi_k^{\mathfrak{m}}+i\eta_k|}\prod_{k=1,k\neq {n+1}}^{2n+1} r(\varepsilon(\xi_k-\xi_{k-1})),
\end{aligned}
\end{equation}
where $\alpha_k$ and $\eta_k$ are defined as
\begin{equation}
\alpha_k :=\left\{
\begin{array}{llll}
\tilde{\alpha} & if\ k\leq n-n_0\\
\alpha & if\ n-n_0+1\leq k \leq n\\
\beta & if\ n+1\leq k \leq n+n_0\\
\tilde{\beta} & if\ k\geq n+n_0+2
\end{array}
\right.
\end{equation}
\begin{equation}
\eta_k :=\left\{
\begin{array}{llll}
\tilde{\eta} & if\ k\leq n-n_0\\
\eta & if\ n-n_0+1\leq k \leq n\\
\eta & if\ n+1\leq k \leq n+n_0\\
\tilde{\eta} & if\ k\geq n+n_0+2.
\end{array}
\right.
\end{equation}
We present the following lemmas to show $\displaystyle\lim_{\varepsilon\rightarrow 0}\mathbb{E}\|\Psi_{n_0,\varepsilon}\|_2^2\rightarrow0$.\\

\begin{lemma}
Let $n=4n_0$. For any $\pi\in \Pi(n,n)$ we have
\begin{equation}
\label{3.1} |\tilde{C}_{\pi}^{\#}|\leq
\frac{(C|\log{\varepsilon}|)^{4n_0}}{\kappa^{n_0}}.
\end{equation}
\end{lemma}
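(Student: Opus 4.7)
The strategy is to extract the small factor $\kappa^{-n_0}$ on top of the graph-type estimate $(C|\log\varepsilon|)^{4n_0}$ developed in Section~2, exploiting the fact that the innermost $s$-integration is restricted to an interval of length $t/\kappa$. I would first apply Lemma~5.1 to each of the two kernel factors defining $\tilde{K}^{\#}$, and then integrate in $s\in[\theta_j,\theta_{j+1}]$ exactly as was done for $K^{\#}$ earlier in the text. This produces a formula for $\tilde{K}^{\#}$ identical to the one displayed for $K^{\#}$, except that the first product of resolvents runs over $k=1,\dots,n-n_0$ rather than $k=0,\dots,n-n_0$; that is, the $k=0$ resolvent is amputated. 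Accordingly, $|\tilde{C}_\pi^{\#}|$ satisfies a bound analogous to \eqref{CP} with the same two regulators $|\alpha-\tilde{\alpha}+i(\eta-\tilde{\eta})|^{-1}$, $|\beta-\tilde{\beta}+i(\eta-\tilde{\eta})|^{-1}$, the pair-delta functions $\Delta_\pi$, and the amplitudes $F(\bxi,n)\overline{F(\bzeta,n)}$, but with the endpoint resolvents at $k=0$ and $k=2n+1$ absent.

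With $\alpha,\tilde{\alpha},\beta,\tilde{\beta}$ held as parameters, I would next carry out the $\bxi,\bzeta$ integrations graph by graph, imitating the arguments of Lemmas~2.1--2.3. Since $\pi\in\Pi(n,n)$ with $n=4n_0$, there are $\bar{n}=4n_0$ pairs; each pair contributes a factor of order $C|\log\varepsilon|$ via the change-of-variable and logarithmic-delta estimate used in the proof of Lemma~2.1. The classification into crossing, non-crossing non-simple, and simple graphs transplants essentially verbatim, because amputating the $k=0$ (resp.\ $k=2n+1$) resolvent only removes a uniformly bounded factor that does not interact with the pair-delta functions. This step yields the $(C|\log\varepsilon|)^{4n_0}$ factor, modulo residual $(\alpha,\tilde{\alpha},\beta,\tilde{\beta})$-dependent quantities.

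The factor $\kappa^{-n_0}$ emerges from the residual $\tilde{\alpha},\tilde{\beta}$ integrations. After the $\bxi,\bzeta$-integrations, there remain on each side $n_0$ resolvents $|\tilde{\alpha}+\xi_k^{\mathfrak{m}}+i\tilde{\eta}|^{-1}$ for the $k\in I_{n-n_0+1,n}$ whose momenta survive the pairings, multiplied by the regulator $|\alpha-\tilde{\alpha}+i(\eta-\tilde{\eta})|^{-1}$. Because $\eta-\tilde{\eta}=(\kappa-1)t^{-1}\sim \kappa/t$, iteratively applying the one-dimensional resolvent bound used in the proof of Lemma~2.1 (or equivalently the contour-shift identity in \eqref{5.2.1}) to these $n_0$ factors produces a bound of order $(t/\kappa)^{n_0}$, rather than the $t^{n_0}$ that would arise from the width $\tilde{\eta}^{-1}=t$ alone; this can also be seen directly in the time domain since the $n_0$-fold Duhamel iteration encoded in $K(s;\bxi,I_{n-n_0+1,n})$ occupies a phase-space volume of order $(t/\kappa)^{n_0-1}/(n_0-1)!$. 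The $\tilde{\beta}$-integral contributes another factor of the same type. Combining these three contributions gives the asserted bound. The principal technical difficulty is to arrange the order of integration so that the $\kappa^{-n_0}$ gain is extracted without disturbing the logarithmic graph estimate: concretely, the $\tilde{\alpha},\tilde{\beta}$-integrations must be performed \emph{after} the $\bxi,\bzeta$-integrations, and the intermediate estimates must depend on $(\tilde{\alpha},\tilde{\beta})$ only through the resolvent denominators so that the regulator can absorb them.
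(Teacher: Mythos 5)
Your overall strategy is right in outline --- get the $(C|\log\varepsilon|)^{4n_0}$ factor from a Section-2-style graph analysis, and get the $\kappa^{-n_0}$ gain from the fact that one of the two regularization parameters is the large $\kappa/t$ rather than $1/t$ --- but the \emph{mechanism} you propose for extracting $\kappa^{-n_0}$ is not the paper's, and as stated it does not work.

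You want to pull the small factor out of the residual $\tilde\alpha,\tilde\beta$-integrations by exploiting the regulator $|\alpha-\tilde\alpha+i(\eta-\tilde\eta)|^{-1}$ together with the $n_0$ resolvents $|\tilde\alpha+\xi_k^{\mathfrak m}+i\tilde\eta|^{-1}$ for $k\in I_{n-n_0+1,n}$. But those resolvents carry the \emph{small} regularization $\tilde\eta=t^{-1}$, so bounding them gives $t^{n_0}$, not $(t/\kappa)^{n_0}$; and the regulator is a single factor, so a Lemma~5.2-type $\tilde\alpha$-integration can only ``upgrade'' one resolvent's width from $\tilde\eta$ to $\eta$, not $n_0$ of them. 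Your time-domain heuristic has the same flaw: in $\tilde K^{\#}=\int_{\theta_j}^{\theta_{j+1}}K(\theta_{j+1}-s;\bxi,I_{1,n-n_0})K(s;\bxi,I_{n-n_0+1,n})\,ds$ the inner kernel $K(s;\bxi,I_{n-n_0+1,n})$ has time argument $s\in[\theta_j,\theta_{j+1}]$ which is of size $\sim t$, so its simplex volume is $O(t^{n_0-1}/(n_0-1)!)$; the $(t/\kappa)$-powers come from the \emph{outer} kernel $K(\theta_{j+1}-s;\cdots)$ whose time argument $\theta_{j+1}-s\le t/\kappa$ is genuinely small.

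The paper's actual route is much more elementary and avoids your difficulty entirely. One first establishes the a~priori bound $|\tilde C_\pi^{\#}|\le(C|\log\varepsilon|)^{4n_0}$ by the same graph estimates as in Lemmas~2.1--2.3, using the $A_0$-resolvents for the $\bxi,\bzeta$-integrations. The $B_0$-resolvents are never integrated; they are simply bounded pointwise by $|\alpha_k+\xi_k^{\mathfrak m}+i\eta_k|^{-1}\le\eta_k^{-1}$. The combinatorial observation is then that, with $n=4n_0$, among the $4n_0$ indices in $B_0$ at least $n-2n_0-2\ge n_0$ carry the large regularization $\eta_k=\kappa/t$ (a pigeonhole count: there are on the order of $6n_0$ such indices in total and only $4n_0$ slots in $A_0$). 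Those $n_0$ pointwise bounds contribute $(t/\kappa)^{n_0}$ rather than $t^{n_0}$, which directly yields
\begin{equation*}
\prod_{k\in B_0}\frac{1}{|\alpha_k+\xi_k^{\mathfrak m}+i\eta_k|}\le\prod_{k\in B_0}\eta_k^{-1}\le\frac{C^{n_0}t^{4n_0}}{\kappa^{n_0}},
\end{equation*}
and hence the claimed bound. So the gain does not come from the $\tilde\alpha,\tilde\beta$-integrations at all; it comes from the ``free'' $B_0$-resolvents, a subset of which have the large imaginary part $\kappa/t$. You should replace your third paragraph with this pointwise argument; the first two paragraphs of your proposal are essentially correct.
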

\begin{proof}
The following bound can be easily obtained by using Lemma
\begin{equation}
|\tilde{C}_{\pi}^{\#}|\leq (C|\log{\varepsilon}|)^{4n_0}.
\end{equation}
To recover the denominator in \eqref{3.1}, notice that among the
$\eta_k$ for $k\in B_0$, there are at least $n-2n_0-2(\geq n_0)$ of
them with $\eta_k=\kappa/t$. Hence
\begin{equation}
\prod_{k\in B_0} \frac{1}{|\alpha_k+\xi_k^{\mathfrak{m}}+i\eta_k|}
\leq \prod_{k \in B_0} |\eta_k|^{-1} \leq t^n \left(
\frac{t}{\kappa} \right)^{n_0} \leq \frac{t^n}{\kappa^{n_0}}.
\end{equation}
\end{proof}

\begin{lemma}
If $\pi\in\Pi_c(n,n)$, then we have
\begin{equation}
|C_{\pi}^{\#}|\leq
(C\log{\varepsilon})^n\varepsilon^{\lambda}\|\Phi(\xi)\|_2^2.
\end{equation}
\end{lemma}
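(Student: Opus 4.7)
The plan is to mirror the proof of Lemma 2.1, which handled the crossing graphs for $C_\pi$, while accounting for the new structure of $K^{\#}$. Starting from the bound \eqref{CP}, we have four auxiliary frequency variables $\alpha,\tilde{\alpha},\beta,\tilde{\beta}$ (rather than the two in $C_\pi$) joined by the regularizing factors $|\alpha-\tilde{\alpha}+i(\eta-\tilde{\eta})|^{-1}$ and $|\beta-\tilde{\beta}+i(\eta-\tilde{\eta})|^{-1}$, with $\eta-\tilde{\eta}=(\kappa-1)/t>0$. The spatial structure (delta functions $\Delta_\pi$, the factors $r(\varepsilon(\xi_k-\xi_{k-1}))$, and the initial-data factor $F$) is identical to that treated in Lemma 2.1.

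First I would identify the rightmost crossing index $Q=\max_m q_m$ and set $A'=A_0\setminus\{Q\}$, apply the bound \eqref{F} on $F$, and bound the propagators $|\alpha_k+\xi_k^{\mathfrak{m}}+i\eta_k|^{-1}$ for $k\notin A'\cup\{0,n,n+1,2n+1\}$ by $C$ (which is legitimate since every $\eta_k\geq\tilde{\eta}=t^{-1}$). I would then change variables $\xi_k\to\xi_k/\varepsilon$ for $k\in A'\cup\{0\}$ and use \eqref{5.2.4} for each $k\in A'\setminus\{k_0,k_1\}$ to pick up a factor $C|\log\varepsilon|$ per such pair. The remaining spatial delta functions are used in the same order as in Lemma 2.1 to eliminate $\xi_{l(Q)}$, $\xi_Q$, $\xi_{k_0}$ and $\xi_{n+1}$.

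The new step concerns the frequency integrals. I would first integrate in $\tilde{\alpha}$ against the $\tilde{\alpha}$-dependent propagator factors (those with $k\leq n-n_0$) together with the regularizer $|\alpha-\tilde{\alpha}+i(\eta-\tilde{\eta})|^{-1}$; by \eqref{5.2.1} this produces a factor with logarithmic loss and leaves a single propagator-type denominator in $\alpha$ at the shared momentum. Treating $\tilde{\beta}$ analogously reduces the four-variable integral to a two-variable integral in $(\alpha,\beta)$ of exactly the type treated in Lemma 2.1. Applying \eqref{5.2.1} for the $\alpha$-integral, \eqref{5.1} for the $\xi_0$-integral to produce $\varepsilon^{\lambda}(\xi_n^{\lambda}\vee 1)$, and finally integrating in $\xi_n$ against $|\Phi(\xi_n)|^2/\langle\xi_n\rangle^{2d}$ yields the claimed bound $(C|\log\varepsilon|)^n \varepsilon^{\lambda}\|\Phi\|_2^2$.

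The main obstacle is the bookkeeping of the four blocks of $\alpha_k,\eta_k$: depending on whether the relevant indices $0$, $Q$, $l(Q)$, $k_0$, $k_1$, $n$, $n+1$, $2n+1$ fall on the same side of the boundaries $n-n_0$ or $n+n_0+1$, the crossing delta function $\delta(\xi_Q-\xi_{Q-1}+\xi_{l(Q)}-\xi_{l(Q)-1})$ may couple propagator denominators carrying different frequency variables. In such cases one must first use the regularizer to reduce the coupled denominators to a single common frequency before invoking the $\xi_0$-integral estimate \eqref{5.1}; each such reduction costs only an additional $|\log\varepsilon|$ factor and can be absorbed into the $(C|\log\varepsilon|)^n$ prefactor without affecting the gain $\varepsilon^{\lambda}$.
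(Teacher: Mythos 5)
Your proof is correct and follows essentially the same route as the paper: the paper reduces Lemma~3.2 to the argument of Lemma~2.1, with the $(\alpha,\tilde\alpha)$ and $(\beta,\tilde\beta)$ pairs handled by Proposition~5.3 --- whose own proof is precisely the two-step application of Lemma~5.2 that you spell out --- and the $\xi_0$-integral handled by Proposition~5.6. One small slip: once the frequency integrals have produced logarithmic factors, your final $\xi_0$-integral needs Proposition~5.6 (inequality~\eqref{5.2.3}, which tolerates the $[1+\log_+|\cdot|]^k$ numerator) rather than Lemma~5.4's \eqref{5.1}, though the paper's own statement of Lemma~2.1 makes the same citation imprecision.
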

\begin{proof}
The proof of Lemma 3.2 is essentially the same as in Lemma 2.1. The only
difference is that the integral in $\alpha$ and $\tilde{\alpha}$
($\beta$ and $\tilde{\beta}$) is estimated using Proposition 5.3 by
\begin{equation}
\begin{aligned}
\int\int
\frac{1}{|\tilde{\alpha}-\alpha+i(\eta-\tilde{\eta})|}\frac{1}{|\tilde{\alpha}+\xi_n^{\mathfrak{m}}+i\eta|}\
\frac{1}{|\alpha+\xi_0^{\mathfrak{m}}/{\varepsilon^{\mathfrak{m}}}+i\tilde{\eta}|}d\alpha
d\tilde{\alpha} \leq
\frac{C\left[1+\log_{+}\left|\frac{\xi_n^{\mathfrak{m}}-\xi_0^{\mathfrak{m}}/{\varepsilon}^{\mathfrak{m}}}{\tilde{\eta}}\right|\right]^2}
{|\xi_n^{\mathfrak{m}}-\xi_0^{\mathfrak{m}}/{\varepsilon^{\mathfrak{m}}}+i{\tilde{\eta}}|},
\end{aligned}
\end{equation}
and the integral in $\xi_0$ is estimated using Proposition 5.6
\begin{equation}
\varepsilon^{-\mathfrak{m}}\int
\frac{1}{\langle\xi_0\rangle^{2d}}\frac{\left[1+\log_{+}\left|\frac{\xi_0^{\mathfrak{m}}/{\varepsilon^{\mathfrak{m}}}-\xi_n^{\mathfrak{m}}}{\tilde{\eta}}\right|\right]^4}
{|\xi_0^{\mathfrak{m}}/{\varepsilon^{\mathfrak{m}}}-\xi_n^{\mathfrak{m}}+i\tilde{\eta}|^2}d\xi_0\leq
C\varepsilon^{\lambda}(\xi_n^{\lambda}\vee 1).
\end{equation}
\end{proof}

\begin{lemma}
If $\pi\in\Pi_{ncs}(n,n)$, then we have
\begin{equation}
|C_{\pi}^{\#}|\leq (C\log{\varepsilon})^n
\varepsilon^{2\lambda(1-\delta)}\|\Phi(\xi)\|_2^2.
\end{equation}
\end{lemma}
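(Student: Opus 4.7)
The plan is to mirror the strategy of Lemma 2.2, exploiting the fact that a non-crossing graph on the doubled index set factors into pairings on each side. Since $\pi \in \Pi_{ncs}(n,n)$ has no crossing, we can write $\pi = \pi_1 \cup \pi_2$ where $\pi_1$ pairs indices in $\{1,\dots,n\}$ and $\pi_2$ pairs indices in $\{n+1,\dots,2n+1\}$. Because $K^\#$ on the $\xi$-side depends only on $(\alpha,\tilde\alpha)$ and $\overline{K^\#}$ on the $\zeta$-side depends only on $(\beta,\tilde\beta)$, and the pairing constrains momenta only within each side, the integrand in $C_\pi^\#$ factorizes into a product of two analogous one-sided objects. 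After the Cauchy--Schwarz-like separation, this yields
\begin{equation*}
|C_\pi^\#| \leq \int \bigl|I_{\pi_1}^\#(\xi_0)\bigr|\,\bigl|I_{\pi_2}^\#(\xi_0)\bigr|\,d\xi_0,
\end{equation*}
where $I_\pi^\#$ is the one-sided analog of $I_\pi$ with $K$ replaced by $K^\#$.

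The core of the argument is thus to prove the $I_\pi^\#$ analog of \eqref{2.2.1}, namely $|I_\pi^\#(\xi_0)| \leq (C|\log\varepsilon|)^{n/2}\varepsilon^{\lambda(1-\delta)}|\Phi(\xi_0)|$ for $\pi_1 \in \Pi_{ncs}(n)$. Here I would follow exactly the case split of Lemma 2.2: case (i) locates an index $k_2 \in A_0$ whose successor subtree consists entirely of simple pairs (yielding a delta function $\delta(\xi_{k_2}/\varepsilon - \xi_{l(k_2)-1})$), and case (ii) locates nested pairs $k_4, k_5 = l(k_4)-1$ after stripping simple pairs. The momentum estimates \eqref{5.1} and \eqref{5.2.4} are applied in exactly the same places to produce the factor $\varepsilon^{\lambda(1-\delta)}$ and one $|\log\varepsilon|$ per simple pair.

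The new ingredient that distinguishes the \# case is the presence of two frequency variables $(\alpha,\tilde\alpha)$ on each side, together with the explicit factor $\frac{1}{|\alpha-\tilde\alpha+i(\eta-\tilde\eta)|}$ coming from the integration over the inner time $s$. I would handle this exactly as in the proof of Lemma 3.2: perform the $(\alpha,\tilde\alpha)$ integration first via Proposition 5.3, which combines the three relevant resolvents into a single resolvent times $[1+\log_+|\cdots/\tilde\eta|]^2$, so that the structural role of the frequency variable is reduced to a single $\alpha$ as in Lemma 2.2, at the cost of at most a power of $|\log\varepsilon|$. After this reduction the momentum integrations proceed verbatim as in Lemma 2.2, and the subsequent $\xi_0$ integration uses \eqref{5.1} (the analog quoted inside the proof of Lemma 3.2) to absorb the squared log through the weight $\langle\xi_0\rangle^{-2d}$, contributing the second factor $\varepsilon^{\lambda(1-\delta)}$.

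Combining the two one-sided estimates and applying Cauchy--Schwarz in $\xi_0$ against $|\Phi|^2$ yields the claimed bound $(C|\log\varepsilon|)^n \varepsilon^{2\lambda(1-\delta)}\|\Phi\|_2^2$. The main technical obstacle I anticipate is bookkeeping the choice of which resolvent each $\eta_k \in \{\eta,\tilde\eta\}$ contributes to, since in the \# setting the resolvent widths differ depending on whether the index lies in $I_{0,n-n_0}$ or $I_{n-n_0+1,n}$; however, the non-crossing structure guarantees that both paired indices of any simple or nested pair lie in the same $\eta$-regime, so the estimates \eqref{5.1} and \eqref{5.2.4} apply uniformly with $\eta = \tilde\eta = t^{-1}$ being a conservative worst case. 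The remainder of the computation is then essentially identical to Lemma 2.2.
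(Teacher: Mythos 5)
Your overall plan correctly identifies the factorization $\pi = \pi_1 \cup \pi_2$ available for non-crossing graphs and reduces the problem to a one-sided estimate for $I_{\pi_1}^\#$, which matches the spirit of the paper. However, your proposed treatment of the double frequency integration contains a genuine gap. You propose to ``perform the $(\alpha,\tilde\alpha)$ integration first via Proposition 5.3, \dots so that the structural role of the frequency variable is reduced to a single $\alpha$'' --- but Proposition 5.3 integrates out \emph{both} frequency variables, so nothing is left over. More importantly, the order is wrong for the non-crossing, non-simple case: in Lemma 2.2 the factor $\varepsilon^{\lambda(1-\delta)}$ is produced by the momentum integration over $\xi_{k_2}$ (or $\xi_{k_5}$) via Proposition 5.5, and this integration requires the resolvent at that index; it also leaves behind a weight $(\alpha_{k_2}^{\lambda(1-\delta)/\mathfrak{m}} \vee 1)$ that must still sit inside the frequency integral. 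If you integrate $(\alpha,\tilde\alpha)$ first (which would mean bounding all but two of the $\xi_k$-resolvents by constants so that Proposition 5.3 applies), you destroy exactly the resolvent at $k_2$ needed to extract $\varepsilon^{\lambda(1-\delta)}$, and Proposition 5.3 has no mechanism for handling the power weight in any case. The paper instead keeps the order of Lemma 2.2 --- momentum integrations first, producing the weight, then frequency last --- and supplies the single modified estimate
\begin{equation*}
\int\int (\alpha^{\frac{\lambda(1-\delta)}{\mathfrak{m}}}\vee 1)\,\frac{d\alpha\,d\tilde\alpha}{|\tilde\alpha-\alpha+i(\eta-\tilde\eta)|\,|\tilde\alpha+\xi_0^{\mathfrak{m}}+i\tilde\eta|\,|\alpha+\xi_0^{\mathfrak{m}}+i\eta|} \le C\xi_0^{\lambda(1-\delta)},
\end{equation*}
obtained by first integrating the unweighted variable with Lemma 5.2 and then absorbing the resulting log into the weighted $\alpha$-integral, paralleling the last step of Lemma 2.2 verbatim.

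Relatedly, your description of the $\xi_0$ step conflates the crossing and non-crossing mechanisms. Your appeal to ``\eqref{5.1} as quoted inside the proof of Lemma 3.2'' and your claim that the $\xi_0$ integration ``contributes the second factor $\varepsilon^{\lambda(1-\delta)}$'' describe the crossing case (Lemmas 2.1 and 3.2), where the $\varepsilon^\lambda$ gain indeed comes from the rescaled $\xi_0$ integration via Propositions 5.3 and 5.6. In the non-crossing case both copies of $\varepsilon^{\lambda(1-\delta)}$ come from the one-sided $\xi_{k_2}$-type integrations on $\pi_1$ and $\pi_2$ respectively; the terminal $\xi_0$ integration merely cancels the byproduct $\xi_0^{\lambda(1-\delta)}$ against $\langle\xi_0\rangle^{-2d}$ and evaluates $\int|\Phi(\xi_0)|^2\,d\xi_0$, contributing no further power of $\varepsilon$. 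Finally, the remark that ``both paired indices of any simple or nested pair lie in the same $\eta$-regime'' is not guaranteed (a pair can straddle the index $n-n_0$), though this causes no difficulty once the resolvent with the larger $\eta$ is bounded by the one with the smaller $\eta$.
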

\begin{proof}
The proof of Lemma 3.3 is similar to that of Lemma 2.2. The only
difference is that the integral in $\alpha$ and $\tilde{\alpha}$
($\beta$ and $\tilde{\beta}$) is estimated by
\begin{equation}
\begin{aligned}
&\int\int
(\alpha^{\frac{\lambda(1-\delta)}{\mathfrak{m}}}\vee 1)
\frac{1}{|\tilde{\alpha}-\alpha+i(\eta-\tilde{\eta})|}\frac{1}{|\tilde{\alpha}+\xi_0^{\mathfrak{m}}+i\tilde{\eta}|}\frac{1}{|\alpha+\xi_0^{\mathfrak{m}}+i\eta|}d\alpha d\tilde{\alpha}\\
\leq &C
\int(\alpha^{\frac{\lambda(1-\delta)}{\mathfrak{m}}}\vee 1)\frac{1}{|{\alpha}+\xi_0^{\mathfrak{m}}+i\tilde{\eta}|^2}
\left[\log_+\left|\frac{{\alpha}+\xi_0^{\mathfrak{m}}}{\tilde{\eta}}\right|+1\right]d{\alpha}\leq C\xi_0^{\lambda(1-\delta)}.
\end{aligned}
\end{equation}
\end{proof}

\begin{lemma}
If $\pi\in\Pi_s(n,n)$, then we have
\begin{equation}
|C_{\pi}^{\#}|\leq \frac{C^n}{(n!)^{1/2}}\|\hat{u}_0(\xi)\|_2^2.
\end{equation}
\end{lemma}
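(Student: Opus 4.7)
The plan is to adapt the simple-graph analysis in Lemma 2.3 to the split-time kernel $K^{\#}$. Since $\pi\in\Pi_s(n,n)$ is the unique simple graph, the delta function $\Delta_\pi$ factors into $\xi$- and $\zeta$-side pieces $\prod_{k\in A_0^\xi}\delta(\xi_{k-1}-\xi_{k+1})$ and its $\zeta$-analogue, together with the junction $\delta(\xi_0-\zeta_0)$. Using the junction to identify $\zeta_0=\xi_0$ and exploiting the symmetry between the two sides, I write
\[
|C_\pi^{\#}| = \int d\xi_0\,|\hat{u}_0(\xi_0)|^2\,|J^{\#}(\xi_0)|^2,
\]
where $J^{\#}(\xi_0)$ arises from the $\xi$-side integration alone. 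Mimicking the proof of Lemma 2.3, the simple-pair deltas force $\xi_k=\xi_0$ for all $k\notin A_0^\xi$, and each paired $\xi_k$-integration against $\varepsilon^{d-\mathfrak{m}}\hat{R}(\varepsilon(\xi_k-\xi_0))$ contributes a factor $\Theta_{\alpha,\eta}(\xi_0)$ or $\Theta_{\tilde\alpha,\tilde\eta}(\xi_0)$ depending on whether the index $k$ lies in the outer block $\{0,\dots,n-n_0\}$ or the inner block. The uniform bound $|\Theta|\leq C$ from \eqref{Theta} and the approximation \eqref{ThetaErr} let me replace $\Theta_{\alpha,\eta}\to\Theta$ up to an $O(\varepsilon^{\mathfrak{m}/2}C^n)$ remainder that is absorbed in the final bound.

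Returning to the time-domain representation of $K^{\#}$, the leading term becomes
\[
J^{\#}_{\text{leading}}(\xi_0) = \Theta(\xi_0)^{n/2}\,e^{i\xi_0^{\mathfrak{m}}\theta_{j+1}}\int_{\theta_j}^{\theta_{j+1}} \frac{(\theta_{j+1}-s)^{M_1-1}}{(M_1-1)!}\cdot\frac{s^{M_2-1}}{(M_2-1)!}\,ds,
\]
where $M_1$ and $M_2$ are the counts of non-paired momenta in the outer and inner blocks, satisfying $M_1+M_2=n/2+1$. Extending the lower limit to $0$ and applying the Beta-function identity gives
\[
\int_0^{\theta_{j+1}} \frac{(\theta_{j+1}-s)^{M_1-1}}{(M_1-1)!}\cdot\frac{s^{M_2-1}}{(M_2-1)!}\,ds = \frac{\theta_{j+1}^{n/2}}{(n/2)!},
\]
collapsing the two separate factorials into a single $(n/2)!$. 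Combined with $|\Theta|\leq C$ and $\theta_{j+1}\leq T$, this yields $|J^{\#}(\xi_0)|\leq C^{n/2}T^{n/2}/(n/2)!$ uniformly in $\xi_0$, whence
\[
|C_\pi^{\#}|\leq \left(\frac{C^{n/2}T^{n/2}}{(n/2)!}\right)^{2}\|\hat{u}_0(\xi)\|_2^2 = \frac{C^n\,T^n}{((n/2)!)^2}\|\hat{u}_0(\xi)\|_2^2\leq \frac{C^n}{(n!)^{1/2}}\|\hat{u}_0(\xi)\|_2^2,
\]
where the last step uses the Stirling bound $((n/2)!)^2\gtrsim(n!)^{1/2}$, uniform in $n$ after absorbing small-$n$ corrections into $C$.

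The main obstacle is making sure this clean reduction to a Beta integral survives the coupling factor $(\alpha-\tilde\alpha+i(\eta-\tilde\eta))^{-1}$ in the explicit $(\alpha,\tilde\alpha)$-representation of $K^{\#}$. The cleanest route, followed above, is to first collapse the paired momenta (yielding $\Theta$ factors and powers of the scalar propagators) and only then return to the time-domain representation, where the outer and inner polynomial simplex volumes convolve over $s\in[\theta_j,\theta_{j+1}]$ into precisely a Beta integral. Verifying that the index counts $M_1,M_2$ add up to $n/2+1$ for both the outer/inner split and the paired/non-paired split is a finite combinatorial check that I would include as a short lemma.
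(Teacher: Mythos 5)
Your proposal is correct and carries out exactly the argument the paper alludes to when it writes that the proof is ``essentially the same as in Lemma 2.3.'' The key points you identify are precisely the ones that need attention: (i) for the simple graph the delta functions factor across the two sides, $\xi_n=\zeta_n=\xi_0$, and the $\hat u_0$ factors combine to $|\hat u_0(\xi_0)|^2$, so $C_\pi^{\#}$ factorizes as $\int|\hat u_0(\xi_0)|^2|J^{\#}(\xi_0)|^2\,d\xi_0$; (ii) integrating out the $n/2$ free momenta in $A_0$ (against $\varepsilon^{d-\mathfrak m}\hat R$) produces $\Theta_{\alpha,\eta}$- or $\Theta_{\tilde\alpha,\tilde\eta}$-factors depending on which block the index falls in, and after the replacement $\Theta_{\cdot,\cdot}\to\Theta$ the remaining $(\alpha,\tilde\alpha)$-integral is exactly the Feynman representation of $\int_{\theta_j}^{\theta_{j+1}}K(\theta_{j+1}-s;\xi_0^{\otimes M_1})K(s;\xi_0^{\otimes M_2})\,ds$; and (iii) the Beta identity $\int_0^\theta\frac{(\theta-s)^{M_1-1}}{(M_1-1)!}\frac{s^{M_2-1}}{(M_2-1)!}\,ds=\frac{\theta^{M_1+M_2-1}}{(M_1+M_2-1)!}$ with $M_1+M_2=n/2+1$ gives $\frac{\theta_{j+1}^{n/2}}{(n/2)!}$, after which Stirling yields $\frac{1}{((n/2)!)^2}\leq\frac{C^n}{(n!)^{1/2}}$. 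The combinatorial identity $M_1+M_2=n/2+1$ is just the count of even indices in $\{0,\dots,n\}$, which is independent of where the outer/inner split falls, so no further check is required. The one place you could tighten the write-up is the absorption of the $\Theta$-replacement error: by \eqref{ThetaErr} it contributes $O(\varepsilon^{\mathfrak m/2}C^{n})$, and since $n\leq 4n_0(\varepsilon)$ one has $(n!)^{1/2}\lesssim \varepsilon^{-2\gamma}$, so $\varepsilon^{\mathfrak m/2}C^n\lesssim C^n/(n!)^{1/2}$ for $\gamma$ small — the same type of absorption is left implicit in Lemma 2.3, so your level of detail is consistent with the paper's.
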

\begin{proof}
The proof is essentially the same as in Lemma 2.3. We shall not
repeat the argument here.
\end{proof}
\noindent We now apply the above lemmas to estimate
$\Psi_{n_0,\varepsilon}$. From Lemma 3.2, 3.3, and 3.4 we have
\begin{equation}
\|U_1\|_2^2\leq
n_0^2\kappa^2[(C\log\varepsilon)^{4n_0}\varepsilon^{\lambda}](\frac{8n_0}{e})^{4n_0}+\frac{C^{n_0}n_0^2\kappa^2}{(n_0!)^{1/2}}.
\end{equation}
From Lemma 3.1 we have
\begin{equation}
\|U_2\|_2^2\leq
\frac{(C|\log\varepsilon|)^{4n_0}}{\kappa^{n_0}}(\frac{8n_0}{e})^{4n_0}.
\end{equation}
The $L^2$ estimate of $\Psi_{n_0,\varepsilon}$ is therefore given by
\begin{equation}
\label{uErr}
\mathbb{E}\|\sum_{n_0}^{\infty}\hat{u}_{\varepsilon,n}(t)\|_2^2=\mathbb{E}\|\Psi_{n_0,\varepsilon}\|_2^2\leq2(\mathbb{E}\|U_1\|_2^2+\mathbb{E}\|U_2\|_2^2)\rightarrow
0
\end{equation}
as $\varepsilon\rightarrow 0$.

\section{Homogenization}
We come back to the analysis of $U_{\varepsilon,s}(t,\xi)$. We find
that $U_{\varepsilon,s}$ is the solution to the following equation
\begin{equation}
\begin{aligned}
\label{Usimp}
U_{\varepsilon,s}&=e^{it{\xi}^{\mathfrak{m}}}\hat{u}_0(\xi)-\int_0^t e^{is{\xi}^{\mathfrak{m}}}\int_0^{t-s} e^{i s_1 \xi_1^{\mathfrak{m}}} \int \varepsilon^{d-\mathfrak{m}}\hat{R}(\varepsilon(\xi_1-\xi)) U_{\varepsilon,s} (t-s-s_1,\xi) d\xi_1 ds ds_1\\
&:=e^{it{\xi}^{\mathfrak{m}}}\hat{u}_0(\xi)+A_{\varepsilon}U_{\varepsilon,s}(t,\xi).
\end{aligned}
\end{equation}
\begin{lemma}
Let us define $\mathfrak{U}_{\epsilon}(t,\xi)$ to be the solution to
\begin{equation}
\label{frakU}
\begin{aligned}
(i\frac{\partial}{\partial t}+\xi^{\mathfrak{m}}-\rho_{\epsilon}(\xi))\mathfrak{U}_{\epsilon}(t,\xi)&=0\\
\mathfrak{U}_{\epsilon}(0,\xi)&=\hat{u}_0(\xi),
\end{aligned}
\end{equation}
with
$\rho_\epsilon=\int_{\mathbb{R}^d}\frac{\hat{R}(\xi_1-\epsilon\xi)}{\xi_1^{\mathfrak{m}}}d\xi_1$.
We have the convergence results
\begin{equation}
|(U_{\varepsilon,s}-\mathfrak{U}_{\varepsilon})(t)|\lesssim
\max\{\varepsilon^{\mathfrak{m}},\varepsilon^{d-\mathfrak{m}}|\log\varepsilon|\}.
\end{equation}
\end{lemma}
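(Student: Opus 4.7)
The plan is to cast both $U_{\varepsilon,s}$ and $\mathfrak{U}_\varepsilon$ in a common Fourier representation and reduce the comparison to a scalar self-energy estimate. Summing the simple-graph Duhamel series encoded by \eqref{Usimp} as a geometric series and invoking the kernel representation from Lemma~5.1 yields
\[
U_{\varepsilon,s}(t,\xi)=\frac{ie^{t\eta}\hat u_0(\xi)}{2\pi}\int\frac{e^{-i\alpha t}\,d\alpha}{\alpha+\xi^{\mathfrak{m}}-\Theta_{\alpha,\eta}(\xi)+i\eta},
\]
where $\Theta_{\alpha,\eta}$ is the self-energy already defined in the proof of Lemma~2.3. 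The explicit solution $\mathfrak{U}_\varepsilon=e^{it(\xi^{\mathfrak{m}}-\rho_\varepsilon)}\hat u_0$ admits the same representation with the scalar $\rho_\varepsilon(\xi)$ in place of $\Theta_{\alpha,\eta}(\xi)$. Combining the two fractions reduces the difference to
\[
(U_{\varepsilon,s}-\mathfrak{U}_\varepsilon)(t,\xi)=\frac{ie^{t\eta}\hat u_0(\xi)}{2\pi}\int\frac{e^{-i\alpha t}\bigl(\Theta_{\alpha,\eta}-\rho_\varepsilon\bigr)\,d\alpha}{(\alpha+\xi^{\mathfrak{m}}-\Theta_{\alpha,\eta}+i\eta)(\alpha+\xi^{\mathfrak{m}}-\rho_\varepsilon+i\eta)},
\]
so everything reduces to bounding the scalar quantity $\Theta_{\alpha,\eta}(\xi)-\rho_\varepsilon(\xi)$.

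Expressing both $\Theta_{\alpha,\eta}$ and $\rho_\varepsilon$ in the common form $\int\hat R(\xi_k-\varepsilon\xi)/(\textrm{denom})\,d\xi_k$ (after the rescaling $\xi_k\to\xi_k/\varepsilon$ used throughout Section~2) and subtracting fractions gives the exact identity
\[
\Theta_{\alpha,\eta}(\xi)-\rho_\varepsilon(\xi)=-\varepsilon^{\mathfrak{m}}(\alpha+i\eta)\int\frac{\hat R(\xi_k-\varepsilon\xi)\,d\xi_k}{\bigl(\varepsilon^{\mathfrak{m}}(\alpha+i\eta)+\xi_k^{\mathfrak{m}}\bigr)\xi_k^{\mathfrak{m}}},
\]
with the prefactor $\varepsilon^{\mathfrak{m}}(\alpha+i\eta)$ providing the smallness. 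I estimate the remaining integral by splitting the $\xi_k$-domain at the resonance scale $|\xi_k|\sim\varepsilon|\alpha+i\eta|^{1/\mathfrak{m}}$: in the inner region the $1/\xi_k^{\mathfrak{m}}$ singularity is integrable since $d>\mathfrak{m}$; in the outer region the integrand decays like $|\hat R|/|\xi_k|^{2\mathfrak{m}}$, which is uniformly integrable only for $d>2\mathfrak{m}$ and produces a $|\log\varepsilon|$ factor when $\mathfrak{m}<d\le 2\mathfrak{m}$ through the matching of the two regions. Combining the contributions yields $|\Theta_{\alpha,\eta}(\xi)-\rho_\varepsilon(\xi)|\lesssim\max\{\varepsilon^{\mathfrak{m}},\varepsilon^{d-\mathfrak{m}}|\log\varepsilon|\}$, uniformly in $\alpha$ near the relevant pole.

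The $\alpha$-integral is then evaluated by closing the contour in the lower half-plane and picking up the two simple poles $\alpha=\rho_\varepsilon-\xi^{\mathfrak{m}}-i\eta$ and $\alpha=\Theta_{\alpha,\eta}-\xi^{\mathfrak{m}}-i\eta$, whose separation is exactly $\Theta_{\alpha,\eta}-\rho_\varepsilon$; a first-order expansion in that small difference converts the self-energy bound into the pointwise estimate $|U_{\varepsilon,s}(t,\xi)-\mathfrak{U}_\varepsilon(t,\xi)|\lesssim t\,|\Theta-\rho_\varepsilon|\,|\hat u_0(\xi)|$, and the decay of $\hat u_0$ assumed in Section~1 absorbs the $\xi$-dependence to deliver the stated bound. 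The hard part will be carrying out the two-region integral estimate uniformly in $\alpha$ on the contour and uniformly in $\xi$; the borderline dimension $d=2\mathfrak{m}$ is precisely where the matching of the two regions forces the logarithmic correction in the final answer.
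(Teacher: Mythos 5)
Your route is genuinely different from the paper's: you resum the simple-graph series into a closed-form self-energy representation in the $\alpha$-dual variable and try to compare self-energies pointwise, whereas the paper stays in the time domain, recognizes both $U_{\varepsilon,s}$ and $\mathfrak{U}_\varepsilon$ as solutions of Volterra integral equations with a common operator $A_\varepsilon$, bounds the discrepancy operator $E_\varepsilon=B_\varepsilon-A_\varepsilon$ acting on the explicit exponential $\mathfrak{U}_\varepsilon$ (via Lemma 5.9), and concludes with Gronwall.

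There is a concrete gap in your argument at the step ``uniformly in $\alpha$ near the relevant pole.'' Your exact identity
\[
\Theta_{\alpha,\eta}(\xi)-\rho_\varepsilon(\xi)=-\varepsilon^{\mathfrak{m}}(\alpha+i\eta)\int\frac{\hat R(\xi_k-\varepsilon\xi)}{\bigl(\varepsilon^{\mathfrak{m}}(\alpha+i\eta)+\xi_k^{\mathfrak{m}}\bigr)\xi_k^{\mathfrak{m}}}\,d\xi_k
\]
only produces a bound of order $\max\{\varepsilon^{\mathfrak{m}},\varepsilon^{d-\mathfrak{m}}|\log\varepsilon|\}$ when $\varepsilon^{\mathfrak{m}}|\alpha|\lesssim 1$. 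For $|\alpha|\gtrsim\varepsilon^{-\mathfrak{m}}$ the denominator $\varepsilon^{\mathfrak{m}}(\alpha+i\eta)+\xi_k^{\mathfrak{m}}$ is dominated by the $\alpha$-term, so $\Theta_{\alpha,\eta}\to 0$ while $\rho_\varepsilon$ is a fixed $O(1)$ constant; hence $|\Theta_{\alpha,\eta}-\rho_\varepsilon|\approx\rho_\varepsilon$ is not small there. A naive absolute-value bound on your $\alpha$-integral therefore contributes an $O(1)$ error from the tail $|\alpha+\xi^{\mathfrak{m}}|\gtrsim 1$, which destroys the estimate. To recover smallness you must exploit the oscillation $e^{-i\alpha t}$, which is exactly what the proposed contour deformation is meant to do --- but $\Theta_{\alpha,\eta}(\xi)=\int\hat R(\xi_k-\varepsilon\xi)/(\varepsilon^{\mathfrak{m}}\alpha+\xi_k^{\mathfrak{m}}+i\varepsilon^{\mathfrak{m}}\eta)\,d\xi_k$ has a branch cut precisely along $\{\operatorname{Im}\alpha=-\eta,\ \operatorname{Re}\alpha<0\}$, and the resonances you want to pick up sit on or below this cut (the self-energy has nonpositive imaginary part). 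So ``close the contour and pick up two simple poles'' is not available as stated; you would need a second-sheet continuation and an error estimate on the deformed contour, none of which is sketched.

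A second, milder issue is the opening step: interchanging the $\alpha$-integral with the geometric-series sum requires $|\Theta_{\alpha,\eta}/(\alpha+\xi^{\mathfrak{m}}+i\eta)|<1$, which fails near $\alpha=-\xi^{\mathfrak{m}}$ where $1/|\alpha+\xi^{\mathfrak{m}}+i\eta|\sim\eta^{-1}=t$. The resummed formula is still morally correct (it is the Fourier-side version of the paper's Volterra equation), but it needs to be justified differently, e.g.\ by showing it solves the same integral equation that uniquely characterizes $U_{\varepsilon,s}$. The paper sidesteps both difficulties by staying in the time domain: $A_\varepsilon$ is a Volterra operator, so $(I-A_\varepsilon)$ is automatically invertible by Gronwall without any contraction hypothesis, and the error is reduced to estimating $E_\varepsilon\mathfrak{U}_\varepsilon$ directly --- which localizes the $\xi_1$-integral without ever having to control a non-absolutely-convergent $\alpha$-integral.
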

\begin{proof}
(1) We obtain from Duhamel's principle that
\begin{equation}
\begin{aligned}
\mathfrak{U}_{\varepsilon}(t,\xi)&=
e^{it{\xi}^{\mathfrak{m}}}\hat{u}_0(\xi)-i\int_0^t
e^{i\xi^{\mathfrak{m}}v}\int_{\mathbb{R}^d}
\frac{\hat{R}(\xi_1-\varepsilon\xi)}{\xi_1^{\mathfrak{m}}}d\xi_1
\mathfrak{U}_{\varepsilon}(t-v,\xi)dv\\
&:=e^{it{\xi}^{\mathfrak{m}}}\hat{u}_0(\xi)+B_{\varepsilon} \mathfrak{U}_{\varepsilon}(t,\xi)\\
&:=e^{it{\xi}^{\mathfrak{m}}}\hat{u}_0(\xi)+A_{\varepsilon}
\mathfrak{U}_{\varepsilon}(t,\xi)+E_{\varepsilon}
\mathfrak{U}_{\varepsilon}(t,\xi),
\end{aligned}
\end{equation}
where the operator $A_{\varepsilon}$ is defined in \eqref{Usimp} and may be recast as
\begin{equation}
\begin{aligned}
A_{\varepsilon} \mathfrak{U}_{\varepsilon} &= -\int_0^t e^{i\xi^{\mathfrak{m}} v}\int_{\mathbb{R}^d} \int_0^{\frac{v}{\varepsilon^{\mathfrak{m}}}} e^{is_1(\xi_1^{\mathfrak{m}}-\varepsilon^{\mathfrak{m}}\xi^{\mathfrak{m}})}ds_1 \hat{R}(\xi_1-\varepsilon\xi)d\xi_1 \mathfrak{U}_{\varepsilon}(t-v,\xi)dv\\
&= -\int_0^t e^{i\xi^{\mathfrak{m}}v} \int_{\mathbb{R}^d}
\frac{1}{i(\xi_1^{\mathfrak{m}}-\varepsilon^{\mathfrak{m}}\xi^{\mathfrak{m}})}
(e^{i
\frac{v}{\varepsilon^{\mathfrak{m}}}(\xi_1^{\mathfrak{m}}-\varepsilon^{\mathfrak{m}}\xi^{\mathfrak{m}})}-1)\hat{R}
(\xi_1-\varepsilon\xi)d\xi_1 \mathfrak{U}_{\varepsilon}(t-v,\xi)dv,
\end{aligned}
\end{equation}
and the remainder $E_{\varepsilon}$ is then given by
\begin{equation}
\begin{aligned}
E_{\varepsilon}\mathfrak{U}_{\varepsilon}&=i\int_0^t
\int_{\mathbb{R}^d}\frac{1}{\xi_1^{\mathfrak{m}}-\varepsilon^{\mathfrak{m}}\xi^{\mathfrak{m}}} {e^{i
\frac{v}{\varepsilon^{\mathfrak{m}}}\xi_1^{\mathfrak{m}}}}
\hat{R}(\xi_1-\varepsilon\xi)d\xi_1 \mathfrak{U}_{\varepsilon}(t-v,\xi)dv\\
&+\ i\int_0^t\int_{\mathbb{R}^d}
\frac{\varepsilon^{\mathfrak{m}}\xi^{\mathfrak{m}}}{\xi_1^\mathfrak{m}(\xi_1^{\mathfrak{m}}-\varepsilon^{\mathfrak{m}}
\xi^{\mathfrak{m}})}(e^{i\xi^{\mathfrak{m}}v}-e^{i\frac{\xi_1^{\mathfrak{m}}}{{\varepsilon^{\mathfrak{m}}}}v})\hat{R}
(\xi_1-\varepsilon\xi)d\xi_1 \mathfrak{U}_{\varepsilon}(t-v,\varepsilon)dv\\
&:=I_1+I_2.
\end{aligned}
\end{equation}
For the calculation of $I_1$, note that equation \eqref{frakU} has the explicit solution:
\begin{equation}
\mathfrak{U}(t,\xi)=e^{it(\xi^{\mathfrak{m}}-\rho_{\varepsilon}(\xi))}\hat{u}_0(\xi).
\end{equation}
On one hand, we may obtain the following expression of integral in $v$ using the method of separation of variables:
\begin{equation}
\begin{aligned}
\int_0^t e^{i\frac{v}{\varepsilon^{\mathfrak{m}}}(\xi_1^{\mathfrak{m}}-\varepsilon^{\mathfrak{m}}\xi^{\mathfrak{m}})}
\mathfrak{U}_{\varepsilon}(t-v,\xi)dv
=&\frac{\varepsilon^{\mathfrak{m}}}{\xi_1^{\mathfrak{m}}-\varepsilon^{\mathfrak{m}}\xi^{\mathfrak{m}}}\int_0^t \mathfrak{U}_{\varepsilon}(t-v,\xi)d(e^{i\frac{v}{\varepsilon^{\mathfrak{m}}}(\xi_1^{\mathfrak{m}}-\varepsilon^{\mathfrak{m}}
\xi^{\mathfrak{m}})})\\
=&\frac{\varepsilon^{\mathfrak{m}}}{\xi_1^{\mathfrak{m}}-\varepsilon^{\mathfrak{m}}\xi^{\mathfrak{m}}}
[e^{i\frac{t}{\varepsilon^{\mathfrak{m}}}(\xi_1^{\mathfrak{m}}-\varepsilon^{\mathfrak{m}}\xi^{\mathfrak{m}})}\hat{u}_0(\xi)
-e^{i(t-v)(\xi^{\mathfrak{m}}-\rho_{\varepsilon}(\xi))}\hat{u}_0(\xi)\\
&+i\int_0^t e^{i\frac{v}{\varepsilon^{\mathfrak{m}}}(\xi_1^{\mathfrak{m}}-\varepsilon^{\mathfrak{m}}\xi^{\mathfrak{m}})}
e^{i(t-v)(\xi^{\mathfrak{m}}-\rho_{\varepsilon}(\varepsilon))}\hat{u}_0(\xi)dv].
\end{aligned}
\end{equation}
On the other hand, we have the following simple estimate of this integral:
\begin{equation}
\left|\int_0^t  e^{i\frac{v}{\varepsilon^{\mathfrak{m}}}(\xi_1^{\mathfrak{m}}-\varepsilon^{\mathfrak{m}}\xi^{\mathfrak{m}})}
\mathfrak{U}_{\varepsilon}(t-v,\xi)dv\right|
\leq \int_0^t |e^{i\frac{v}{\varepsilon^{\mathfrak{m}}}(\xi_1^{\mathfrak{m}}-\varepsilon^{\mathfrak{m}}\xi^{\mathfrak{m}})}
\mathfrak{U}_{\varepsilon}(t-v,\xi)|dv
\leq C|\hat{u}_0(\xi)|.
\end{equation}
This gives that
\begin{equation}
\left|\int_0^t e^{i\frac{v}{\varepsilon^{\mathfrak{m}}}(\xi_1^{\mathfrak{m}}-\varepsilon^{\mathfrak{m}}\xi^{\mathfrak{m}})}
\mathfrak{U}_{\varepsilon}(t-v,\xi)dv\right|
\leq C\varepsilon^{\mathfrak{m}} (\frac{1}{\xi_1^{\mathfrak{m}}-\varepsilon^{\mathfrak{m}}\xi^{\mathfrak{m}}}\wedge \frac{1}{\varepsilon^{\mathfrak{m}}})\max\{|\hat{u}_0(\xi)|,|\xi^{\mathfrak{m}}\hat{u}_0(\xi)|\}.
\end{equation}
We therefore find the estimate of $|I_1|$ using Lemma 5.9:
\begin{equation}
\begin{aligned}
|I_1|&\leq C\varepsilon^{\mathfrak{m}}\int_{\mathbb{R}^d} (\frac{1}{\xi_1^{\mathfrak{m}}-\varepsilon^{\mathfrak{m}}\xi^{\mathfrak{m}}}\wedge\frac{1}{\varepsilon^{\mathfrak{m}}})
\max\{|\hat{u}_0(\xi)|,|\xi^{\mathfrak{m}}\hat{u}_0(\xi)|\}\hat{R}(\xi_1-\varepsilon\xi)d\xi_1\\
&\leq C\max\{1,\xi^{\mathfrak{m}}\}\max\{\varepsilon^{\mathfrak{m}},\varepsilon^{d-\mathfrak{m}}|\log\varepsilon|(\xi^{\mathfrak{m}}+1)^{\frac{d}{\mathfrak{m}}-2},
\varepsilon^{d-\mathfrak{m}}(\xi^{\mathfrak{m}}+1)^{\frac{d}{\mathfrak{m}}-2}\}|\hat{u}_0(\xi)|.
\end{aligned}
\end{equation}
For the calculation of $I_2$, we first estimate the integral in $v$:
\begin{equation}
\varepsilon^{\mathfrak{m}}\left|\int_0^t \frac{e^{i\xi^{\mathfrak{m}}v}-e^{i\frac{\xi_1^{\mathfrak{m}}}{\varepsilon^{\mathfrak{m}}}v}}
{\xi_1^{\mathfrak{m}}-\varepsilon^{\mathfrak{m}}\xi^{\mathfrak{m}}}\mathfrak{U}_{\varepsilon}(t-v,\xi)dv\right|
\leq C(\frac{1}{\xi_1^{\mathfrak{m}}-\varepsilon^{\mathfrak{m}}\xi^{\mathfrak{m}}}\wedge\frac{1}{\varepsilon^{\mathfrak{m}}})
\max\{|\hat{u}_0(\xi)|,|\xi^{\mathfrak{m}}\hat{u}_0(\xi)|\}.
\end{equation}
Hence
\begin{equation}
\begin{aligned}
|I_2|&\leq
C\int_{\mathbb{R}^d} \frac{\xi^{\mathfrak{m}}}{\xi_1^{\mathfrak{m}}}(\frac{1}{\xi_1^{\mathfrak{m}}-\varepsilon^{\mathfrak{m}}\xi^{\mathfrak{m}}}\wedge\frac{1}{\varepsilon^{\mathfrak{m}}})
\hat{R}(\xi_1-\varepsilon\xi)\max\{|\hat{u}_0(\xi)|,|\xi^{\mathfrak{m}}\hat{u}_0(\xi)|\}d\xi_1\\
&\leq C\max\{1,\xi^{\mathfrak{m}}\}\max\{\varepsilon^{\mathfrak{m}},\varepsilon^{d-\mathfrak{m}}|\log\varepsilon|(\xi^{\mathfrak{m}}+1)^{\frac{d}{\mathfrak{m}}-2},
\varepsilon^{d-\mathfrak{m}}(\xi^{\mathfrak{m}}+1)^{\frac{d}{\mathfrak{m}}-2}\}|\xi^{\mathfrak{m}}\hat{u}_0(\xi)|.
\end{aligned}
\end{equation}

\noindent Note that
\begin{equation}
|A_{\varepsilon}U(t,\xi)| \leq C \int_0^t |U(s,\xi)| ds,
\end{equation}
over a bounded interval in time. The equation
\begin{equation}
(I-A_{\varepsilon})U(t,\xi)=S(t,\xi)
\end{equation}
therefore admits a unique solution by Gronwall's Lemma, which is bounded by
\begin{equation}
|U(t,\xi)|\leq \|S\|_{\infty} e^{Ct}.
\end{equation}
We verify that the solution to
\begin{equation}
(I-B_{\varepsilon})\mathfrak{U}_{\varepsilon} =
e^{it\xi^{\mathfrak{m}}} \hat{u}_0(\xi),
\end{equation}
is given by
\begin{equation}
\mathfrak{U}_{\varepsilon}(t,\xi)=e^{it(\xi^{\mathfrak{m}}-\rho_{\varepsilon}(\xi))}\hat{u}_0(\xi).
\end{equation}
The error
$V_{\varepsilon}(t,\xi)=(U_{\varepsilon,s}(t,\xi)-\mathfrak{U}_{\varepsilon}(t,\xi))$
is a solution to
\begin{equation}
(I-A_{\varepsilon})V_{\varepsilon} =
E_{\varepsilon}\mathfrak{U}_{\varepsilon}(t,\xi),
\end{equation}
so that over bounded intervals in time, we find that
\begin{equation}
|U_{\varepsilon,s}(t,\xi)-\mathfrak{U}_{\varepsilon}(t,\xi)| =
|V_{\varepsilon}(t,\xi)| \lesssim \max\{\varepsilon^{\mathfrak{m}},\varepsilon^{d-\mathfrak{m}}|\log\varepsilon|\}.
\end{equation}
\end{proof}

From our assumption that
$\hat{R}(\xi)\in\mathcal{C}^2(\mathbb{R}^d)$, we find that
\begin{equation}
|e^{it(\xi^{\mathfrak{m}}-\rho_{\varepsilon}(\xi))}-e^{it(\xi^{\mathfrak{m}}-\rho)}|
\leq t|\rho_{\varepsilon}(\xi)-\rho)| \leq C\varepsilon^2\xi^2.
\end{equation}
The reason for the second-order accuracy is that
$\hat{R}(-\xi)=\hat{R}(\xi)$ and $\nabla \hat{R}(0)=0$ so that
first-order terms in the Taylor expansion vanish.

\noindent In terms of the solutions of PDE we defined in \eqref{uhom}, we may recast the above result as
\begin{equation}
\label{est4.1} |(U_{\varepsilon,s}-\mathcal{U})(t)|
\lesssim \max\{\varepsilon^{2},\varepsilon^{\mathfrak{m}},\varepsilon^{d-\mathfrak{m}}|\log\varepsilon|\}
\qquad \mathcal{U}(t,\xi)=e^{i(\xi^{\mathcal{m}}-\rho)t}\hat{u}_0(\xi),
\end{equation}
\noindent We now prove Theorem \ref{thm}. By the triangle inequality, we
have the estimate
\begin{equation}
\begin{aligned}
&\mathbb{E}\|\hat{u}_{\varepsilon}(t,\xi)-{\mathcal{U}}(t,\xi)\|_2\\
\leq
&\mathbb{E}\|\sum_{n=0}^{n_0(\varepsilon)-1}(u_{\varepsilon,n}-U_{\varepsilon,s}^n)(t)\|_2+\mathbb{E}\|\sum_{n_0(\varepsilon)}^{+\infty}\hat{u}_{\varepsilon,n}(t)\|_2
+\|(U_{\varepsilon,s}-{\mathcal{U}})(t)\|_2+\|\sum_{n_0(\varepsilon)}^{+\infty}U_{\varepsilon,s}^n(t)\|_2.
\end{aligned}
\end{equation}
The vanishing of the first three terms on right hand side of this inequality when $\varepsilon$ goes to zero follows from \eqref{est2.2.2}, \eqref{uErr} and \eqref{est4.1} respectively. The fourth term also vanishes because of the $L^2$ convergence of $U_{\varepsilon,s}$.
\section{Inequalities and Proofs}
In this section, we present and prove several inequalities used in earlier sections. There are similar versions of
Lemma 5.2 and 5.7 in \cite{Erdos-Yau2}. The proofs are given below for the convenience of the reader.
The proofs of similar versions of Lemma 5.1 and 5.8 can be found in
\cite{Erdos-Yau2}.
\begin{lemma}
We have the following identity for $\eta>0$:
\begin{equation}
\label{5.3.1} K(t,\bxi,I)=ie^{t\eta}\int d\alpha e^{-i\alpha
t}\prod_{k\in I} \frac{1}{\alpha+\xi_k^{\mathfrak{m}}+i\eta}.
\end{equation}
We also claim the following estimate with $n:=|I|-1$:
\begin{equation}
\label{5.3.2} |K(t,\bxi,I)|\leq \frac{t^n}{n!}.
\end{equation}
\end{lemma}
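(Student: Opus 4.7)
The plan is to establish both parts of Lemma 5.1 by the standard Fourier--representation trick that converts the constrained time integral into a product of resolvents, and then obtain the bound by a direct simplex volume computation.

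For the identity \eqref{5.3.1}, I would start from the elementary formula
\[
\frac{1}{\alpha+\xi_k^{\mathfrak{m}}+i\eta} = -i\int_0^\infty e^{is_k(\alpha+\xi_k^{\mathfrak{m}}+i\eta)}\,ds_k,
\]
which is valid because $\eta>0$ guarantees absolute convergence of the $s_k$--integral. Taking the product over $k\in I$, inserting into the $\alpha$--integral on the right--hand side of \eqref{5.3.1}, and interchanging the order of integration (justified by absolute convergence) produces
\[
ie^{t\eta}\int d\alpha\, e^{-i\alpha t}\prod_{k\in I}\frac{1}{\alpha+\xi_k^{\mathfrak{m}}+i\eta}
 = i(-i)^{|I|}e^{t\eta}\int_{[0,\infty)^I}\!\prod_{k\in I}ds_k\,e^{-\eta\sum_{k\in I} s_k}\prod_{k\in I} e^{is_k\xi_k^{\mathfrak{m}}}\int d\alpha\,e^{i\alpha(\sum_{k\in I} s_k -t)}.
\]
With the Fourier convention used throughout the paper, the $\alpha$--integral yields $\delta\bigl(\sum_{k\in I} s_k - t\bigr)$. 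On the support of this delta function one has $e^{-\eta\sum_{k\in I} s_k}=e^{-\eta t}$, which cancels the prefactor $e^{t\eta}$, and the constants combine via $i(-i)^{|I|} = (-i)^{|I|-1}$. This reproduces exactly the definition of $K(t;\bxi,I)$.

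For the bound \eqref{5.3.2}, I would estimate directly from the definition. Since $|e^{is_k\xi_k^{\mathfrak{m}}}|=1$ and the overall prefactor has unit modulus,
\[
|K(t;\bxi,I)| \leq \int_0^t\!\prod_{k\in I} ds_k\,\delta\Bigl(t-\sum_{k\in I} s_k\Bigr)
 = \mathrm{vol}\Bigl\{s \in [0,t]^{I} : \textstyle\sum_{k\in I} s_k = t\Bigr\}.
\]
Using the delta function to perform one integration, say in $s_{k_0}$ for some fixed $k_0\in I$, reduces the remaining expression to the Lebesgue measure of the $n$--dimensional simplex $\{(s_k)_{k\neq k_0}\in\mathbb{R}^n_{\geq 0} : \sum_{k\neq k_0} s_k \leq t\}$ with $n=|I|-1$, which equals $t^n/n!$. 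A one-line induction on $|I|$ confirms the simplex volume.

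The main (minor) obstacle is purely bookkeeping: one must verify that the Fourier convention adopted in the paper absorbs any $2\pi$ factors correctly and that the power of $-i$ tracks through as $i(-i)^{|I|}=(-i)^{|I|-1}$. There is no substantive analytic difficulty; both parts reduce to the contour--type identity for the resolvent in the free variable $\alpha$ and to the classical formula for the volume of the standard simplex.
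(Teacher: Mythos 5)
Your proof is correct and follows the same standard argument sketched in \cite{Erdos-Yau2}, to which the paper itself defers for this lemma: write each resolvent as a one-sided exponential time integral using $\eta>0$, swap the order of integration, use the $\alpha$-integral to produce the delta constraint (under the convention in which $2\pi$ is absorbed into $d\alpha$, which the paper uses implicitly and which you correctly flag), cancel the $e^{\pm t\eta}$ factors, and check the $(-i)$ bookkeeping; the bound then follows from the volume $t^n/n!$ of the $n$-dimensional simplex after one variable is eliminated by the delta function.
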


\begin{lemma}
Assume $\eta>0$. We have the following
inequality:\\
\begin{equation}
\label{5.2.1} \int_{-\infty}^{\infty}
\frac{d\alpha}{|\alpha+A+i\eta||\alpha+B+i\eta|}\leq
\frac{C}{|A-B+i\eta|}\left[1+\log_{+}\left|\frac{A-B}{\eta}\right|\right],
\end{equation}
where $\log_{+}x:=\max\{0,\log x\}$ for $x>0$ and $\log_{+}
0:=0$.\\
\end{lemma}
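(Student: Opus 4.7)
The plan is to partition the real line according to proximity to the two singular points $-A$ and $-B$ of the integrand and to bound each contribution separately, checking that the only logarithm arises from the two near-singularity regions. Set $\Delta:=A-B$ and split $\mathbb{R}$ into
\begin{equation*}
R_A:=\{|\alpha+A|\le|\Delta|/2\},\qquad R_B:=\{|\alpha+B|\le|\Delta|/2\},\qquad R_\infty:=\mathbb{R}\setminus(R_A\cup R_B).
\end{equation*}

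On $R_A$ the triangle inequality forces $|\alpha+B|\ge|\Delta|/2$, and a short calculation from $|\alpha+B+i\eta|^2=|\alpha+B|^2+\eta^2\ge\Delta^2/4+\eta^2\ge\tfrac14(\Delta^2+\eta^2)$ yields $|\alpha+B+i\eta|\ge\tfrac12|A-B+i\eta|$. Pulling this factor out reduces the estimate to
\begin{equation*}
\int_{|u|\le|\Delta|/2}\frac{du}{\sqrt{u^2+\eta^2}}=2\,\mathrm{arcsinh}\!\left(\frac{|\Delta|}{2\eta}\right)\le C\left(1+\log_{+}\left|\frac{A-B}{\eta}\right|\right),
\end{equation*}
which produces exactly the asserted logarithmic factor. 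Region $R_B$ is symmetric.

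On $R_\infty$ both factors are comparable to $|\Delta|$, and by symmetry it suffices to handle the half where $v:=\alpha+B\ge|\Delta|/2$ and $\alpha+A=v+\Delta$. The naive bound $|v+\Delta+i\eta|\ge|\Delta+i\eta|$ would yield the divergent integral $\int dv/\sqrt{v^2+\eta^2}$; I would instead use the sharper inequality $|v+\Delta+i\eta|\ge(v+|\Delta+i\eta|)/\sqrt{2}$, which follows from $(v+\Delta)^2+\eta^2\ge v^2+\Delta^2+\eta^2$ (valid for $v,\Delta\ge 0$) together with the elementary $(a+b)^2\le 2(a^2+b^2)$. Writing $w:=|A-B+i\eta|$, the task reduces to estimating
\begin{equation*}
\int_{|\Delta|/2}^{\infty}\frac{dv}{\sqrt{v^2+\eta^2}\,(v+w)}.
\end{equation*}
Splitting at $v=\eta$ and bounding $\sqrt{v^2+\eta^2}$ by $\eta$ on $\{v\le\eta\}$ and by $v$ on $\{v\ge\eta\}$ turns each piece into an elementary integral of the form $\int dv/[v(v+w)]$ or $\int dv/[\eta(v+w)]$, both of which evaluate explicitly by partial fractions and are bounded by $C/w$ in every regime of $|\Delta|/\eta$; no logarithmic factor appears.

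The main obstacle is precisely this step on $R_\infty$: the crude bound $|v+\Delta+i\eta|\ge|\Delta+i\eta|$ is too weak, and any less careful decomposition introduces a spurious logarithm in the regime $\eta\gg|\Delta|$ where the $\log_+$ in the statement vanishes. The sharpened lower bound combined with the split at $v=\eta$ is what keeps the far-region contribution free of a log, so that all three pieces add up to the stated estimate $C|A-B+i\eta|^{-1}\bigl[1+\log_+|(A-B)/\eta|\bigr]$.
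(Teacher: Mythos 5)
Your proof is correct and follows the same underlying strategy as the paper: decompose $\mathbb{R}$ by proximity to the two singular points $-A$, $-B$, extract the logarithm from the near-singularity regions, and check the far region contributes only $O(|A-B+i\eta|^{-1})$. The paper first reduces by reflection symmetry to the left half-line $(-\infty,-(A+B)/2)$ and then (with $B>A$) splits that half into $(-B,-(A+B)/2)$, $(-B-(B-A),-B)$, and $(-\infty,-B-(B-A))$: the first two near-$(-B)$ intervals give the logarithm directly, and on the tail the estimate is reduced to $\int d\alpha/|\alpha+B+i\eta|^2$, which is then evaluated by cases on whether $B-A\ge\eta$. Your symmetric decomposition into $R_A$, $R_B$, $R_\infty$ is a cleaner bookkeeping device, and your far-region treatment differs technically: rather than dropping one factor in favor of the other, you use the sharpened lower bound $|v+\Delta+i\eta|\ge (v+|\Delta+i\eta|)/\sqrt{2}$ and then integrate $\int dv/[v(v+w)]$ by partial fractions, which sidesteps the $B-A\gtrless\eta$ case split entirely. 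Both routes are elementary and of comparable length; your worry that the naive bound $|v+\Delta+i\eta|\ge|\Delta+i\eta|$ fails on the far region is legitimate, and the paper's escape from that trap (bounding one factor by the other rather than by a constant) and yours (the $(v+w)/\sqrt{2}$ bound) are two natural fixes. The only small point to make explicit in a writeup is the WLOG $\Delta:=A-B\ge 0$, which you use when invoking $(v+\Delta)^2\ge v^2+\Delta^2$ for $v,\Delta\ge0$; this is harmless by the $A\leftrightarrow B$ symmetry of the statement.
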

\begin{proof}
Without loss of generality, we assume $B>A$. We split the
integration over $(-\infty,-\frac{A+B}{2})$ as follows:
\begin{equation}
\label{split}
\begin{aligned}
&\int_{-\infty}^{-\frac{A+B}{2}}\frac{d\alpha}{|\alpha+A+i\eta||\alpha+B+i\eta|}\\
=
&\int_{-B}^{-\frac{A+B}{2}}\frac{d\alpha}{|\alpha+A+i\eta||\alpha+B+i\eta|}
+
\int_{-B-(B-A)}^{-B}\frac{d\alpha}{|\alpha+A+i\eta||\alpha+B+i\eta|}\\
+&\int_{-\infty}^{-B-(B-A)}\frac{d\alpha}{|\alpha+A+i\eta||\alpha+B+i\eta|}.
\end{aligned}
\end{equation}
The first term is estimated as
\begin{equation}
\begin{aligned}
\int_{-B}^{-\frac{A+B}{2}}\frac{d\alpha}{|\alpha+A+i\eta||\alpha+B+i\eta|}
\leq
&\frac{1}{|-\frac{A+B}{2}+A+i\eta|}\int_{-B}^{-\frac{A+B}{2}}\frac{d\alpha}{|\alpha+B+i\eta|}\\
\leq &\frac{2}{|A-B+i\eta|}\int_0^{\frac{B-A}{2\eta}}\frac{d\alpha}{\sqrt{\alpha^2+1}}\\
\leq
&\frac{C}{|A-B+i\eta|}\left[1+\log_{+}\left|\frac{A-B}{\eta}\right|\right].
\end{aligned}
\end{equation}
Likewise, the second term is estimated as
\begin{equation}
\begin{aligned}
\int_{-B-(B-A)}^{-B}\frac{d\alpha}{|\alpha+A+i\eta||\alpha+B+i\eta|}
\leq
&\frac{1}{|A-B+i\eta|}\int_{-B-(B-A)}^{-B}\frac{d\alpha}{|\alpha+B+i\eta|}\\
\leq &\frac{1}{|A-B+i\eta|}\int_0^{\frac{B-A}{\eta}}\frac{d\alpha}{\sqrt{\alpha^2+1}}\\
\leq
&\frac{C}{|A-B+i\eta|}\left[1+\log_{+}\left|\frac{A-B}{\eta}\right|\right].
\end{aligned}
\end{equation}
We obtain the bound for the third term by using the inequality
$|\alpha+A+i\eta|\leq |\alpha+B+i\eta|$ on $(-\infty,-B-(B-A))$.
\begin{equation}
\begin{aligned}
\int_{-\infty}^{-B-(B-A)}\frac{d\alpha}{|\alpha+A+i\eta||\alpha+B+i\eta|}
&\leq \int_{-\infty}^{-B-(B-A)}\frac{d\alpha}{|\alpha+B+i\eta|^2}\\
&=\frac{1}{\eta}\int_{\frac{B-A}{\eta}}^{+\infty}\frac{d\alpha}{\alpha^2+1}.
\end{aligned}
\end{equation}
If $B-A\geq \eta$, we have
\begin{equation}
\label{5.1.beg}
\begin{aligned}
\int_{\frac{B-A}{\eta}}^{+\infty}\frac{d\alpha}{\alpha^2+1}\leq
\int_{\frac{B-A}{\eta}}^{+\infty}\frac{d\alpha}{\alpha^2}\leq
\frac{\eta}{B-A},
\end{aligned}
\end{equation}
in which case
\begin{equation}
\int_{-\infty}^{-B-(B-A)}\frac{d\alpha}{|\alpha+A+i\eta||\alpha+B+i\eta|}\leq
\frac{1}{B-A}\leq \frac{\sqrt{2}}{|A-B+i\eta|}.
\end{equation}
If $B-A<\eta$, we have
\begin{equation}
\begin{aligned}
\int_{\frac{B-A}{\eta}}^{+\infty}\frac{d\alpha}{\alpha^2+1}\leq
\int_1^{+\infty} \frac{d\alpha}{\alpha^2+1}
+\int_{\frac{B-A}{\eta}}^1 \frac{d\alpha}{\alpha^2+1}\leq
\frac{\pi}{4}+(1-\frac{B-A}{\eta}),
\end{aligned}
\end{equation}
in which case
\begin{equation}
\label{5.1.end}
\int_{-\infty}^{-B-(B-A)}\frac{d\alpha}{|\alpha+A+i\eta||\alpha+B+i\eta|}\leq
\frac{1}{\eta}(\frac{\pi}{4}+1)\leq
\frac{\sqrt{2}({\pi}/{4}+1)}{|A-B+i\eta|}.
\end{equation}
By symmetry, the integration over $(-\infty,-\frac{A+B}{2})$ admits
an identical bound.
\end{proof}

\begin{proposition}
Assume $\eta,\tilde{\eta}>0$, $\tilde{\eta}>2\eta$, and $\eta^{-1}$
bounded. We have the following inequality:
\begin{equation}
\label{5.2.2} \int\int
\frac{1}{|\tilde{\alpha}-\alpha+i(\eta-\tilde{\eta})|}\frac{1}{|\tilde{\alpha}+A+i\eta|}\
\frac{1}{|\alpha+B+i\tilde{\eta}|}d\alpha d\tilde{\alpha} \leq
\frac{C\left[1+\log_{+}\left|\frac{A-B}{\tilde{\eta}}\right|\right]^2}{|A-B+i\tilde{\eta}|}.
\end{equation}
\end{proposition}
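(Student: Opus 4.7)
The plan is to evaluate the double integral iteratively, first in $\alpha$ and then in $\tilde\alpha$, each time invoking a variant of Lemma 5.2 that allows the two factors in the denominator to carry distinct positive imaginary parts. The key preliminary I would establish is the extension
\begin{equation*}
\int_{-\infty}^{\infty}\frac{du}{|u+A_1+i\eta_1|\,|u+A_2+i\eta_2|}\le\frac{C}{|A_1-A_2+i(\eta_1\vee\eta_2)|}\Bigl[1+\log_{+}\Bigl|\frac{A_1-A_2}{\eta_1\vee\eta_2}\Bigr|\Bigr],
\end{equation*}
valid for all $\eta_1,\eta_2>0$. This follows from the same dyadic splitting as in the proof of Lemma 5.2 (partitioning $\mathbb{R}$ around $-(A_1+A_2)/2$, $-A_2$ and $-A_2-(A_2-A_1)$), now replacing $\eta$ by $\eta_1\vee\eta_2$ in every intermediate estimate; monotonicity of $|u+A_k+i\eta_k|$ in $\eta_k$ preserves each inequality.

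Next I would apply this bound to the inner $\alpha$-integration. Using $|\tilde\alpha-\alpha+i(\eta-\tilde\eta)|=|\alpha-\tilde\alpha+i(\tilde\eta-\eta)|$ and the hypothesis $\tilde\eta>2\eta$, which gives $\tilde\eta-\eta>0$ and $(\tilde\eta-\eta)\vee\tilde\eta=\tilde\eta$, the generalization above (with $A_1=-\tilde\alpha$, $A_2=B$, $\eta_1=\tilde\eta-\eta$, $\eta_2=\tilde\eta$) yields
\begin{equation*}
\int\frac{d\alpha}{|\tilde\alpha-\alpha+i(\eta-\tilde\eta)|\,|\alpha+B+i\tilde\eta|}\le\frac{C\bigl[1+\log_{+}|(\tilde\alpha+B)/\tilde\eta|\bigr]}{|\tilde\alpha+B+i\tilde\eta|}.
\end{equation*}
There then remains to bound
\begin{equation*}
\int\frac{\bigl[1+\log_{+}|(\tilde\alpha+B)/\tilde\eta|\bigr]\,d\tilde\alpha}{|\tilde\alpha+A+i\eta|\,|\tilde\alpha+B+i\tilde\eta|}.
\end{equation*}
I would re-run the Lemma 5.2 splitting around $-(A+B)/2$, $-B$, and $-B-(B-A)$, this time carrying the logarithmic weight through. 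On each bounded sub-interval one has $|\tilde\alpha+B|\le C|A-B|$, so the weight is dominated by $1+\log_{+}|(A-B)/\tilde\eta|$ and pulls out of the integral; on the unbounded tail the integrand decays like $|\tilde\alpha|^{-2}\log|\tilde\alpha|$, whose integration still produces a contribution of the correct order. Multiplying the two logarithmic factors that appear at the two stages yields the squared log in the desired estimate.

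The main obstacle I anticipate is the tail estimate in the second integration: I must verify that $\int \log|\tilde\alpha/\tilde\eta|\cdot|\tilde\alpha+B+i\tilde\eta|^{-2}\,d\tilde\alpha$, restricted to $|\tilde\alpha|\gtrsim |A-B|$, is dominated by $[1+\log_{+}|(A-B)/\tilde\eta|]/|A-B+i\tilde\eta|$ rather than something larger. This is where the assumption that $\eta^{-1}$ is bounded (i.e.\ $\eta$ stays away from zero) enters, controlling the regime where $|A-B|$ is small compared to $\tilde\eta$ and no logarithmic gain is available. Aside from this bookkeeping, the proof is a straightforward two-step iteration of the slightly strengthened Lemma 5.2.
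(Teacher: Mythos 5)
The student's plan follows the same skeleton as the paper: first apply a variant of Lemma~5.2 to one of the two variables, then re-run the three-interval decomposition from the proof of Lemma~5.2 on the remaining one-dimensional integral, carrying along the logarithmic weight produced by the first step. However, the choice of which variable to integrate first is not symmetric here, and the student picks the wrong one. The paper integrates in $\tilde\alpha$ first, which ``eats'' the factor $|\tilde\alpha+A+i\eta|^{-1}$ carrying the small imaginary part $\eta$. The resulting single integral then has both remaining denominators with the same large imaginary part $\tilde\eta$ \emph{and} a logarithmic weight $\log_+|(\alpha+A)/\tilde\eta|$ synchronized with one of them (it vanishes exactly where $|\alpha+A+i\tilde\eta|^{-1}$ is largest). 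The student instead integrates in $\alpha$, leaving $|\tilde\alpha+A+i\eta|^{-1}$ untouched, so the remaining $\tilde\alpha$-integral has one narrow factor of width $\eta$ and a log weight synchronized with the \emph{other} factor. Near $\tilde\alpha=-A$ the contribution is then
$$\bigl[1+\log_+|(A-B)/\tilde\eta|\bigr]\cdot\frac{1}{|A-B+i\tilde\eta|}\int_{|\tilde\alpha+A|\lesssim|A-B|}\frac{d\tilde\alpha}{|\tilde\alpha+A+i\eta|}\;\sim\;\frac{\bigl[1+\log_+|(A-B)/\tilde\eta|\bigr]\bigl[1+\log_+|(A-B)/\eta|\bigr]}{|A-B+i\tilde\eta|},$$
and since $\eta<\tilde\eta/2$, the factor $1+\log_+|(A-B)/\eta|$ exceeds $1+\log_+|(A-B)/\tilde\eta|$ by $\log(\tilde\eta/\eta)$, which the hypothesis ``$\eta^{-1}$ bounded'' does not control (the ratio $\tilde\eta/\eta$ is unbounded). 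So your ordering does not directly deliver the stated RHS, and the difficulty is not where you anticipate it (the tail, or the regime $|A-B|\ll\tilde\eta$), but in the bounded interval around the narrow singularity $\tilde\alpha=-A$ when $|A-B|\gg\tilde\eta$.

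There is also a technical problem with your preliminary generalization of Lemma~5.2. The asserted bound
$$\int\frac{du}{|u+A_1+i\eta_1||u+A_2+i\eta_2|}\le\frac{C}{|A_1-A_2+i(\eta_1\vee\eta_2)|}\Bigl[1+\log_{+}\Bigl|\frac{A_1-A_2}{\eta_1\vee\eta_2}\Bigr|\Bigr]$$
is false: take $A_1=A_2$, $\eta_1=1$, $\eta_2=\varepsilon\to0$; the left side is $\sim\log(1/\varepsilon)\to\infty$ while the right side is $O(1)$. The logarithm must carry the \emph{smaller} width, not the larger. This does not invalidate your \emph{first} application (there $\eta_1=\tilde\eta-\eta$ and $\eta_2=\tilde\eta$ are comparable because $\tilde\eta>2\eta$), but it is exactly the issue that makes your second integration problematic, as described above. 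Finally, you correctly suspect the tail is delicate, but you do not supply the device the paper uses there: on the set $|\alpha+A|\ge\tilde\eta$, bound $1+\log_{+}|(\alpha+A)/\tilde\eta|\le|(\alpha+A)/\tilde\eta+i|^{1/2}$, which trades the log for a half power and turns the tail integrand into $|\alpha+B+i\tilde\eta|^{-3/2}$, integrable with the right constant. In short, the strategy is right, but the order of integration should be reversed (integrate $\tilde\alpha$ first, following the paper), the Lemma~5.2 extension needs the min rather than the max in the logarithm, and the tail needs the explicit $\log_+ x\le|x+i|^{1/2}$ trick.
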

\begin{proof}
Without loss of generality, we assume $B>A$.\\
We first integrate in $\tilde{\alpha}$ by using Lemma 5.2:
\begin{equation}
\int\frac{1}{|\tilde{\alpha}-\alpha+i(\eta-\tilde{\eta})|}\frac{1}{|\tilde{\alpha}+A+i{\eta}|}d\tilde{\alpha}
\leq \frac{1}{|\alpha+A+i\tilde{\eta}|}\left[1+\log_{+}\left|\frac{\alpha+A}{\tilde{\eta}}\right|\right].
\end{equation}
The integration over $(-\infty,-\frac{A+B}{2})$ is split as in \eqref{split} into three pieces: $(-B,-\frac{A+B}{2})$, $(-B-(B-A),-B)$ and $(-\infty,-B-(B-A))$.\\
The term $1+\log_{+}\left|\frac{\alpha+A}{\tilde{\eta}}\right|$ is bounded
by $1+\log_{+}\left|\frac{A-B}{\tilde{\eta}}\right|$ on the interval
$(-B,-\frac{A+B}{2})$. The integral over $(-B,-\frac{A+B}{2})$ is
then bounded by
\begin{equation}
\int_{-B}^{-\frac{A+B}{2}}
\frac{1+\log_{+}\left|\frac{\alpha+A}{\tilde{\eta}}\right|}{|\alpha+A+i\tilde{\eta}||\alpha+B+i\tilde{\eta}|}d\alpha
\leq
\frac{C}{|A-B+i\tilde{\eta}|}\left[1+\log_{+}\left|\frac{A-B}{\tilde{\eta}}\right|\right]^2.
\end{equation}
The same bound holds for the integral over $(-B-(B-A),-B)$.\\
It remains to estimate the integral over $(-\infty,-B-(B-A))$. The domain of integration can be divided into two parts. On the set
$|\alpha+A|<\tilde{\eta}$, we have
$\log_{+}\left|\frac{\alpha+A}{\tilde{\eta}}\right|=0$, so that applying Lemma
5.2 immediately gives the desired result. On the set
$|\alpha+A|\geq\tilde{\eta}$, we use the simple bound on this interval
\begin{equation}
1+\log_{+}\left|\frac{\alpha+A}{\tilde{\eta}}\right|\leq
\left|\frac{\alpha+A}{\tilde{\eta}}+i\right|^{1/2}.
\end{equation}
It then follows that
\begin{equation}
\begin{aligned}
\int_{-\infty}^{-B-(B-A)}
\frac{1+\log_{+}\left|\frac{\alpha+A}{\tilde{\eta}}\right|}{|\alpha+A+i\tilde{\eta}||\alpha+B+i\tilde{\eta}|}d\alpha
&\leq
C\int_{-\infty}^{-B-(B-A)}\frac{1}{|\alpha+B+i\tilde{\eta}|^{3/2}}d\alpha\\
&=C\int_{\frac{B-A}{\tilde{\eta}}}^{+\infty}
\frac{d\alpha}{(\alpha^2+1)^{3/2}}.
\end{aligned}
\end{equation}
After performing an analysis similar to that from
\eqref{5.1.beg} to \eqref{5.1.end} we find the estimate:
\begin{equation}
\int_{-\infty}^{-B-(B-A)}
\frac{1+\log_{+}\left|\frac{\alpha+A}{\tilde{\eta}}\right|}{|\alpha+A+i\tilde{\eta}||\alpha+B+i\tilde{\eta}|}d\alpha
\leq
\frac{C}{|A-B+i\tilde{\eta}|}\left[1+\log_{+}\left|\frac{A-B}{\tilde{\eta}}\right|\right],
\end{equation}
which concludes the proof.
\end{proof}

\begin{lemma}
Assume $\eta>0$ and $\eta^{-1}$ is bounded. We have the following inequality:
\begin{equation}
\label{5.1} \int \varepsilon^{\mathfrak{m}}
\frac{1}{\langle\xi\rangle^{2d}}
\frac{1}{|\varepsilon^{\mathfrak{m}}\alpha+\xi^{\mathfrak{m}}+i\varepsilon^{\mathfrak{m}}\eta|^2}d\xi
\leq C\varepsilon^{\lambda}(\alpha^{\frac{\lambda}{\mathfrak{m}}}\vee
1).
\end{equation}
\end{lemma}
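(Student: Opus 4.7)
The plan is to reduce the integral to a dimensionless form by rescaling $\xi$, then decompose the resulting radial integral into regions adapted to the location of the Lorentzian peak (if $\alpha<0$) and to the polynomial-decay cutoff scale.

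First I would substitute $\xi = \varepsilon\tilde\xi$, using $|\xi|^{\mathfrak{m}} = \varepsilon^{\mathfrak{m}}|\tilde\xi|^{\mathfrak{m}}$ to pull out the factor $\varepsilon^{2\mathfrak{m}}$ from the denominator and $\varepsilon^d$ from $d\xi$. This yields
\begin{equation*}
I = \varepsilon^{d-\mathfrak{m}}\int_{\mathbb{R}^d} \frac{d\tilde\xi}{(1+\varepsilon^2|\tilde\xi|^2)^d\,\big((\alpha+|\tilde\xi|^{\mathfrak{m}})^2+\eta^2\big)},
\end{equation*}
which already isolates the prefactor $\varepsilon^{d-\mathfrak{m}}$; when $d\leq 2\mathfrak{m}$ this is precisely $\varepsilon^{\lambda}$, while for $d>2\mathfrak{m}$ a further factor $\varepsilon^{2\mathfrak{m}-d}$ must emerge from the integral. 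Switching to polar coordinates and setting $u=|\tilde\xi|^{\mathfrak{m}}$ reduces the task to bounding
\begin{equation*}
J(\alpha,\varepsilon) := \int_0^{\infty} \frac{u^{(d-\mathfrak{m})/\mathfrak{m}}\,du}{(1+\varepsilon^2 u^{2/\mathfrak{m}})^d\,\big((\alpha+u)^2+\eta^2\big)}.
\end{equation*}

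Next I would split the $u$-integration into three pieces. Let $u_0=|\alpha|$ (so $\alpha+u$ vanishes at $u=u_0$ when $\alpha<0$) and $u_*=\varepsilon^{-\mathfrak{m}}$ (the scale at which the $(1+\varepsilon^2 u^{2/\mathfrak{m}})^d$ cutoff activates). On the near-singular region $u\in(u_0/2,\,3u_0/2)$ use $u\asymp u_0$ in the numerator and the Lorentzian bound $\int du/((u-u_0)^2+\eta^2)\lesssim \eta^{-1}$ (finite since $\eta^{-1}$ is bounded); this contributes $O(|\alpha|^{(d-\mathfrak{m})/\mathfrak{m}})$, appropriately modulated by the cutoff factor at $u_0$ when $u_0\gtrsim u_*$. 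On the low region $u<u_0/2$ use $(\alpha+u)^2\gtrsim u_0^2$ to get $O(|\alpha|^{(d-\mathfrak{m})/\mathfrak{m}-1})$, which is subleading. On the tail $u>3u_0/2$ use $(\alpha+u)^2\gtrsim u^2$, reducing the integrand to $u^{(d-3\mathfrak{m})/\mathfrak{m}}(1+\varepsilon^2u^{2/\mathfrak{m}})^{-d}$.

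The last region is where the two regimes differ. For $\mathfrak{m}<d\leq 2\mathfrak{m}$, the exponent $(d-3\mathfrak{m})/\mathfrak{m}<-1$ makes the tail integrable without invoking the cutoff, giving $O((|\alpha|\vee 1)^{(d-2\mathfrak{m})/\mathfrak{m}})$, and the overall bound collapses to $\varepsilon^{d-\mathfrak{m}}(|\alpha|^{\lambda/\mathfrak{m}}\vee 1)$ as claimed. For $d>2\mathfrak{m}$, the tail diverges polynomially and the $(1+\varepsilon^2 u^{2/\mathfrak{m}})^d$ factor is essential: splitting the tail at $u_*$, the contribution up to $u_*$ is $O(\varepsilon^{-(d-2\mathfrak{m})})$ and the contribution from $u>u_*$ is bounded by $\varepsilon^{-2d}\int_{u_*}^\infty u^{(d-3\mathfrak{m})/\mathfrak{m}-2d/\mathfrak{m}}du \sim \varepsilon^{-(d-2\mathfrak{m})}$, so after multiplying by $\varepsilon^{d-\mathfrak{m}}$ one recovers $\varepsilon^{\mathfrak{m}}=\varepsilon^{\lambda}$ (with the $|\alpha|\vee 1$ factor absorbing the near-peak contribution as long as $|\alpha|\lesssim\varepsilon^{-\mathfrak{m}}$, and handled separately using the $\varepsilon$-cutoff when $|\alpha|$ exceeds that scale).

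The main obstacle is the careful bookkeeping in the case $d>2\mathfrak{m}$, where neither the Lorentzian alone nor the polynomial weight alone controls the tail and one must thread the cutoff $u_*=\varepsilon^{-\mathfrak{m}}$ through the estimate so that the $\varepsilon^{-(d-2\mathfrak{m})}$ arising from the truncated tail cancels exactly against the $\varepsilon^{d-\mathfrak{m}}$ prefactor to leave $\varepsilon^{\mathfrak{m}}$. The delicate subcase $d=2\mathfrak{m}$ would produce a harmless logarithm in the tail, which is absorbed into the prefactor $(|\alpha|^{\lambda/\mathfrak{m}}\vee 1)$ or, as elsewhere in the paper, into the factors of $|\log\varepsilon|$ already present in the surrounding estimates.
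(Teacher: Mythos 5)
Your approach is genuinely different from the paper's, and essentially sound, but it leaves a gap in the tail estimate.

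The paper does not rescale $\xi$. Instead it splits $\mathbb{R}^d$ by $|\varepsilon^{\mathfrak{m}}\alpha+\xi^{\mathfrak{m}}|\gtrless 1$: on the outer set the Lorentzian is bounded by $1$ and $\langle\xi\rangle^{-2d}$ is integrable, disposing of the tail in one line; on the inner set it passes to polar coordinates and changes variables to $Q=\varepsilon^{\mathfrak{m}}\alpha+\xi^{\mathfrak{m}}$, which centers the Lorentzian at $Q=0$ and turns the radial Jacobian into $(Q-\varepsilon^{\mathfrak{m}}\alpha)^{d/\mathfrak{m}-1}$. The regime $d\geq 2\mathfrak{m}$ is then handled by trading $\langle\xi\rangle^{-2d}$ for $\xi^{2\mathfrak{m}-d}$ so as to knock the Jacobian power down to $1$, and the resulting $Q$-integral is split at $Q=\varepsilon^{\mathfrak{m}}\eta$. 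Your route — rescale $\xi=\varepsilon\tilde\xi$, pull out $\varepsilon^{d-\mathfrak{m}}$, pass to $u=|\tilde\xi|^{\mathfrak{m}}$, then split around the singular shell $u_0=|\alpha|$ and the cutoff scale $u_*=\varepsilon^{-\mathfrak{m}}$ — gets to the same place but must track the relative position of $u_0$ and $u_*$, which the paper's $Q$-substitution avoids by design. What your version buys is transparency: the origin of the factor $\varepsilon^{\lambda}$ is visible immediately after rescaling ($\varepsilon^{d-\mathfrak{m}}$ from the Jacobian, with an extra $\varepsilon^{2\mathfrak{m}-d}$ from the cutoff when $d>2\mathfrak{m}$), whereas in the paper it emerges from the $Q$-integral endpoints.

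Two points need attention. First, in your tail region $u>3u_0/2$ the bound $(\alpha+u)^2\gtrsim u^2$ reduces the integrand to $u^{(d-3\mathfrak{m})/\mathfrak{m}}$, and for $\mathfrak{m}<d<2\mathfrak{m}$ this exponent is $<-1$, so the tail integral is not convergent at its lower limit when $u_0$ is small or zero ($\alpha\geq 0$). You must instead use $(\alpha+u)^2+\eta^2\geq\eta^2\gtrsim 1$ (using the hypothesis that $\eta^{-1}$ is bounded, as you already do in the near-singular region) for $u$ below a fixed threshold, and reserve $(\alpha+u)^2\gtrsim u^2$ for $u\gtrsim 1$; with that split the $u\to 0$ end is harmless. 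Second, your near-singular/low/tail decomposition is organized around a peak at $u_0=|\alpha|$ which is only a true singularity when $\alpha<0$; the case $\alpha\geq 0$ should be noted separately (it is easier, since $|\alpha+u|\geq u$ everywhere). Both are fixable with no new ideas, but as written the sketch asserts integrability where it fails.
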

\begin{proof}
The domain of integration may be split into two parts. On the set
$|\varepsilon^{\mathfrak{m}}\alpha+\xi^{\mathfrak{m}}|\geq 1$, the
integral is bounded by $\varepsilon^{\mathfrak{m}}
\|1/{\langle\xi\rangle^{2d}}\|_1$. On the set
$|\varepsilon^{\mathfrak{m}}\alpha+\xi^{\mathfrak{m}}|< 1$, we
change to polar coordinates
\begin{equation}
\int_{|\varepsilon^{\mathfrak{m}}\alpha+\xi^{\mathfrak{m}}|<1}
\varepsilon^{\mathfrak{m}} \frac{1}{\langle \xi \rangle^{2d}}
\frac{1}{|\epsilon^{\mathfrak{m}}\alpha+\xi^{\mathfrak{m}}+i\varepsilon^{\mathfrak{m}}\eta|^2}
d\xi \leq C
\int_{|\varepsilon^{\mathfrak{m}}\alpha+\xi^{\mathfrak{m}}|<1}
\varepsilon^{\mathfrak{m}} \frac{1}{\langle \xi \rangle^{2d}}
\frac{\xi^{d-1}}{|\epsilon^{\mathfrak{m}}\alpha+\xi^{\mathfrak{m}}+i\varepsilon^{\mathfrak{m}}\eta|^2}
d|\xi|.
\end{equation}
When $\mathfrak{m}<d<2\mathfrak{m}$, we use the inequality
$|1/{\langle\xi\rangle^{2d}}|\leq 1$ and change variables to
$Q=\varepsilon^{\mathfrak{m}}\alpha+\xi^{\mathfrak{m}}$
\begin{equation}
\int_{|\varepsilon^{\mathfrak{m}}\alpha+\xi^{\mathfrak{m}}|<1}
\varepsilon^{\mathfrak{m}}
\frac{\xi^{d-1}}{|\epsilon^{\mathfrak{m}}\alpha+\xi^{\mathfrak{m}}+i\varepsilon^{\mathfrak{m}}\eta|^2}d|\xi|
= \int_0^1 \varepsilon^{\mathfrak{m}}
\frac{(Q-\varepsilon^{\mathfrak{m}}\alpha)^{\frac{d}{\mathfrak{m}}-1}}
{|Q+i\varepsilon^{\mathfrak{m}}\eta|^2}dQ.
\end{equation}
On the set $[0,\varepsilon^{\mathfrak{m}}\eta]$, we bound
$|Q+i\varepsilon^{\mathfrak{m}}\eta|$ from below by its imaginary
part and estimate the integral
\begin{equation}
\int_0^{\varepsilon^{\mathfrak{m}}\eta} \varepsilon^{\mathfrak{m}}
\frac{(Q-\varepsilon^{\mathfrak{m}}\alpha)^{\frac{d}{\mathfrak{m}}-1}}{|\varepsilon^{\mathfrak{m}}\eta|^2}dQ
\leq  \int_0^{\varepsilon^{\mathfrak{m}}\eta}
\varepsilon^{\mathfrak{m}}
\frac{Q^{\frac{d}{\mathfrak{m}}-1}+(\varepsilon^{\mathfrak{m}}\alpha)^{\frac{d}{\mathfrak{m}}-1}}{|\varepsilon^{\mathfrak{m}}\eta|^2}dQ\leq
C\varepsilon^{d-\mathfrak{m}}(\alpha\vee 1)^{\frac{d}{\mathfrak{m}}-1}.
\end{equation}
On the set $[\varepsilon^{\mathfrak{m}}\eta,1]$, we bound
$|Q+i\varepsilon^{\mathfrak{m}}\eta|$ from below by its real part
and estimate the integral
\begin{equation}
\int_0^{\varepsilon^{\mathfrak{m}}\eta} \varepsilon^{\mathfrak{m}}
\frac{(Q-\varepsilon^{\mathfrak{m}}\alpha)^{\frac{d}{\mathfrak{m}}-1}}{Q^2}dQ
\leq  \int_0^{\varepsilon^{\mathfrak{m}}\eta}
\varepsilon^{\mathfrak{m}}
\frac{Q^{\frac{d}{\mathfrak{m}}-1}+(\varepsilon^{\mathfrak{m}}\alpha)^{\frac{d}{\mathfrak{m}}-1}}{Q^2}dQ\leq
C\varepsilon^{d-\mathfrak{m}}(\alpha\vee 1)^{\frac{d}{\mathfrak{m}}-1}.
\end{equation}
This concludes the proof for $\mathfrak{m}<d<2\mathfrak{m}$.
When $d\geq2\mathfrak{m}$, we use the inequality
$|\xi^{d-2\mathfrak{m}}/\langle \xi \rangle^{2d}|\leq 1$ instead of
$|1/{\langle\xi\rangle^{2d}}|\leq 1$ and the rest of the proof is
similar.
\end{proof}

\begin{proposition}
Assume $\eta>0$ and $\eta^{-1}$ is bounded. We have the following inequality:
\begin{equation}
\label{5.1.1} \int \varepsilon^{\mathfrak{m}}
\frac{1}{\langle\xi\rangle^{2d}}
\frac{1}{|\varepsilon^{\mathfrak{m}}\alpha+\xi^{\mathfrak{m}}+i\varepsilon^{\mathfrak{m}}\eta|^2}d\xi
\leq C\varepsilon^{\lambda(1-\delta)}(\alpha^{\frac{\lambda(1-\delta)}{\mathfrak{m}}}\vee
1).
\end{equation}
\end{proposition}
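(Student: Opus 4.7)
The strategy is to split on $|\alpha|$ at the threshold $\varepsilon^{-\mathfrak{m}}$. Since the ratio of the right-hand side of Proposition 5.6 to that of Lemma 5.5 equals $(\varepsilon|\alpha|^{1/\mathfrak{m}})^{-\lambda\delta}$ for $|\alpha|\geq 1$ (and equals $\varepsilon^{-\lambda\delta}\geq 1$ for $|\alpha|<1$), the bound asserted in Proposition 5.6 is no stronger than that of Lemma 5.5 precisely when $|\alpha|\leq\varepsilon^{-\mathfrak{m}}$. In that regime the claim therefore follows immediately from Lemma 5.5, and it remains to treat $|\alpha|>\varepsilon^{-\mathfrak{m}}$ by direct estimation.

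For $\alpha\geq\varepsilon^{-\mathfrak{m}}$, since $\xi^{\mathfrak{m}}\geq 0$ one has the pointwise bound $|\varepsilon^{\mathfrak{m}}\alpha+\xi^{\mathfrak{m}}+i\varepsilon^{\mathfrak{m}}\eta|\geq\varepsilon^{\mathfrak{m}}\alpha$, whence
\begin{equation*}
\int \varepsilon^{\mathfrak{m}}\frac{1}{\langle\xi\rangle^{2d}}\frac{d\xi}{|\varepsilon^{\mathfrak{m}}\alpha+\xi^{\mathfrak{m}}+i\varepsilon^{\mathfrak{m}}\eta|^2}\leq \frac{\|\langle\cdot\rangle^{-2d}\|_1}{\varepsilon^{\mathfrak{m}}\alpha^2}.
\end{equation*}
The desired inequality $\varepsilon^{-\mathfrak{m}}\alpha^{-2}\lesssim\varepsilon^{\lambda(1-\delta)}\alpha^{\lambda(1-\delta)/\mathfrak{m}}$ rearranges to $(\varepsilon^{\mathfrak{m}}\alpha)^{2+\lambda(1-\delta)/\mathfrak{m}}\gtrsim \varepsilon^{\mathfrak{m}}$, which is immediate from $\varepsilon^{\mathfrak{m}}\alpha>1$ and $\varepsilon<\varepsilon_0$ small.

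For $\alpha\leq -\varepsilon^{-\mathfrak{m}}$, the integrand has a singular sphere at $|\xi|=R:=\varepsilon|\alpha|^{1/\mathfrak{m}}$, with $R>1$. I would split the $\xi$-integration into the near-pole annulus $|\xi|\in[R/2,2R]$ and its complement. Off the annulus one has $||\xi|^{\mathfrak{m}}-R^{\mathfrak{m}}|\gtrsim \max(|\xi|^{\mathfrak{m}},R^{\mathfrak{m}})$, and the resulting contribution is $O(1/(\varepsilon^{\mathfrak{m}}|\alpha|^2)+\varepsilon^{\mathfrak{m}})$, handled as in the positive-$\alpha$ case. On the annulus, pulling out $\langle\xi\rangle^{-2d}\lesssim R^{-2d}$ and performing the polar change of variable $Q=|\xi|^{\mathfrak{m}}-R^{\mathfrak{m}}$ (exactly as in the proof of Lemma 5.5), the remaining one-dimensional integral $\int dQ/(Q^2+(\varepsilon^{\mathfrak{m}}\eta)^2)$ is $O(1/(\varepsilon^{\mathfrak{m}}\eta))$, so the near-pole contribution is $O(R^{-(d+\mathfrak{m})}/\eta)$. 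After substituting $R=\varepsilon|\alpha|^{1/\mathfrak{m}}$, the target inequality reduces to $(\varepsilon^{\mathfrak{m}}|\alpha|)^{(d+\mathfrak{m}+\lambda(1-\delta))/\mathfrak{m}}\gtrsim 1$, which again follows from $\varepsilon^{\mathfrak{m}}|\alpha|>1$.

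The main obstacle is organizing the near-pole calculation for $\alpha<0$ so that the decay of $\langle\xi\rangle^{-2d}$ genuinely absorbs the singularity; because $R>1$, the crude bound $\langle\xi\rangle^{-2d}\lesssim |\xi|^{-2d}$ suffices, and every other step reduces to Lemma 5.5 or elementary algebra.
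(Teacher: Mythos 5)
Your proof is correct, but it is organized quite differently from the paper's. The paper proves the proposition by re-running the proof of Lemma~5.4 with one modification: instead of throwing away all of $\langle\xi\rangle^{-2d}$ via $\langle\xi\rangle^{-2d}\le 1$, it keeps a factor $\langle\xi\rangle^{-\lambda\delta}$ so that, after switching to polar coordinates and substituting $Q=\varepsilon^{\mathfrak{m}}\alpha+\xi^{\mathfrak{m}}$, the exponent on $(Q-\varepsilon^{\mathfrak{m}}\alpha)$ drops from $\frac{\lambda}{\mathfrak{m}}$ to $\frac{\lambda(1-\delta)}{\mathfrak{m}}$; the same computation as in Lemma~5.4 then delivers $\varepsilon^{\lambda(1-\delta)}(|\alpha|\vee 1)^{\lambda(1-\delta)/\mathfrak{m}}$ directly, trading $\varepsilon^{\lambda\delta}$ of smallness for the lower $\alpha$-exponent. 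Your argument instead treats Lemma~5.4 as a black box: you observe (correctly) that the Proposition's bound dominates Lemma~5.4's bound precisely when $|\alpha|\le\varepsilon^{-\mathfrak{m}}$, so that regime is free, and you then close the complementary regime $|\alpha|>\varepsilon^{-\mathfrak{m}}$ by direct estimation — a one-line pointwise bound for $\alpha>0$, and a near-pole annulus decomposition at radius $R=\varepsilon|\alpha|^{1/\mathfrak{m}}>1$ for $\alpha<0$, where the crude bound $\langle\xi\rangle^{-2d}\lesssim R^{-2d}$ suffices and the residual $Q$-integral is Lorentzian. The two approaches are genuinely distinct in structure: the paper's is a minimal edit to an existing proof and is shorter, while yours is more modular (re-usable whenever one already has a tight bound for moderate $\alpha$) but requires you to re-derive the near-pole calculation that Lemma~5.4's proof already contains implicitly. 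One small bookkeeping remark: the complement of the annulus for $\alpha<0$ produces an extra $O(\varepsilon^{\mathfrak{m}})$ term from $|\xi|\ge 2R$; it is worth stating explicitly that this is dominated by the right-hand side because $\lambda(1-\delta)<\mathfrak{m}$ for the allowed range of $d$ and $\delta$, which you invoke only implicitly by writing ``handled as in the positive-$\alpha$ case.''
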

\begin{proof}
To prove \eqref{5.1.1}, we just need to use ${1}/{\langle\xi\rangle^{2d}}$ to bound an additional $\xi^{(d-\mathfrak{m})\delta}$ and the rest of the proof is the same as in Lemma 5.4. As a result, we lose
a contribution $\varepsilon^{(d-\mathfrak{m})\delta}$ for lowering the order of $\alpha$ by $\frac{\beta\delta}{\mathfrak{m}}$.
\end{proof}

\begin{proposition}
Assume $\eta>0$, $\eta^{-1}$ bounded, and $k\in\mathbb{N}$. We have
the following inequality:
\begin{equation}
\label{5.2.3} \int \varepsilon^{\mathfrak{m}}
\frac{1}{\langle\xi\rangle^{2d}}
\frac{\left[1+\log_{+}\left|\frac{\alpha+\xi^{\mathfrak{m}}/{\varepsilon^{\mathfrak{m}}}}{\eta}\right|\right]^k}
{|\varepsilon^{\mathfrak{m}}\alpha+\xi^{\mathfrak{m}}+i\varepsilon^{\mathfrak{m}}\eta|^2}d\xi
\leq C\varepsilon^{\lambda}(\alpha\vee
1)^{\frac{\lambda}{\mathfrak{m}}}.
\end{equation}
\end{proposition}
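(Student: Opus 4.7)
The plan is to reduce the estimate to Lemma 5.4 by absorbing the logarithmic factor into an infinitesimal polynomial growth. The key elementary inequality is that for every $\delta>0$ there exists $C_{\delta}$ with
\[
\bigl[1+\log_+|y|\bigr]^{k}\leq C_{\delta}\bigl(1+|y|^{\delta}\bigr),\qquad y\in\mathbb{R}.
\]
Applying this with $y=(\alpha+\xi^{\mathfrak{m}}/\varepsilon^{\mathfrak{m}})/\eta$ and using that $\eta^{-1}$ is bounded, I would split the integral into two pieces. The ``$1$'' piece is exactly the integral estimated in Lemma 5.4 and is therefore already bounded by $C\varepsilon^{\lambda}(\alpha\vee 1)^{\lambda/\mathfrak{m}}$.

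For the $|y|^{\delta}$ piece, I rewrite $|y|^{\delta}\leq C|\varepsilon^{\mathfrak{m}}\alpha+\xi^{\mathfrak{m}}|^{\delta}/\varepsilon^{\mathfrak{m}\delta}$ and use the pointwise inequality
\[
|\varepsilon^{\mathfrak{m}}\alpha+\xi^{\mathfrak{m}}|\leq |\varepsilon^{\mathfrak{m}}\alpha+\xi^{\mathfrak{m}}+i\varepsilon^{\mathfrak{m}}\eta|.
\]
The extra numerator then lowers the order of the singularity in the denominator from $2$ to $2-\delta$, at the price of an overall prefactor $\varepsilon^{-\mathfrak{m}\delta}$. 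I would then rerun the proof of Lemma 5.4 verbatim with the weakened singularity. Both the sub-integral on $[0,\varepsilon^{\mathfrak{m}}\eta]$, controlled by the imaginary part of the denominator so that it yields $(\varepsilon^{\mathfrak{m}}\eta)^{\delta-2}$ rather than $(\varepsilon^{\mathfrak{m}}\eta)^{-2}$, and the sub-integral on $[\varepsilon^{\mathfrak{m}}\eta,1]$, controlled by the real part so that it yields $Q^{\delta-2}$ rather than $Q^{-2}$, each gain a factor $(\varepsilon^{\mathfrak{m}}\eta)^{\delta}$ compared with Lemma 5.4. Since $\eta^{-1}$ is bounded, this gain is comparable to $\varepsilon^{\mathfrak{m}\delta}$ and exactly cancels the $\varepsilon^{-\mathfrak{m}\delta}$ loss, leaving the same bound $C\varepsilon^{\lambda}(\alpha\vee 1)^{\lambda/\mathfrak{m}}$ as in Lemma 5.4.

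The case distinction $\mathfrak{m}<d<2\mathfrak{m}$ versus $d\geq 2\mathfrak{m}$ is handled exactly as in the last paragraph of Lemma 5.4, using the trivial replacement $\xi^{d-2\mathfrak{m}}/\langle\xi\rangle^{2d}\leq 1$ in the latter. The main technical obstacle is the bookkeeping: one must verify that for $\delta$ sufficiently small (for instance $\delta\ll (2\mathfrak{m}-d)/\mathfrak{m}$ when $d<2\mathfrak{m}$) all intermediate $Q$-integrals with exponents $\delta-2$ and $(d/\mathfrak{m})-3+\delta$ remain convergent, and that the $\varepsilon^{\mathfrak{m}\delta}$ gain balances the $\varepsilon^{-\mathfrak{m}\delta}$ loss uniformly across sub-regions. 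Choosing $\delta$ in this way produces the claimed inequality with a constant $C$ depending on $k$ and $\delta$ but independent of $\varepsilon$ and $\alpha$.
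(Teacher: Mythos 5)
Your proof is correct, and it takes a genuinely different (and more explicit) route than the paper. The paper's proof is a two-line assertion: it splits the domain at $|y|=1$ where $y=(\alpha+\xi^{\mathfrak{m}}/\varepsilon^{\mathfrak{m}})/\eta$, observes that on $|y|<1$ the log vanishes so Lemma 5.4 applies verbatim, and then claims that the integral over $|y|\geq 1$ is $\leq C\varepsilon^{\mathfrak{m}}\|1/\langle\xi\rangle^{2d}\|_1$. That last claim does not follow from a pointwise bound alone: since $|\varepsilon^{\mathfrak{m}}\alpha+\xi^{\mathfrak{m}}+i\varepsilon^{\mathfrak{m}}\eta|=\varepsilon^{\mathfrak{m}}\eta\sqrt{y^2+1}$, the integrand on $|y|\geq 1$ is controlled by $C_k/(\varepsilon^{\mathfrak{m}}\eta)^2$, and multiplying by the prefactor $\varepsilon^{\mathfrak{m}}$ gives only $C_k/(\varepsilon^{\mathfrak{m}}\eta^2)$, which diverges as $\varepsilon\to 0$ with $\eta$ fixed. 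One really does have to return to the $Q$-integral of Lemma 5.4 and track the logarithm through it, which is precisely what your argument does.

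Your device — bounding $[1+\log_+|y|]^k\leq C_\delta(1+|y|^\delta)$, converting $|y|^\delta$ into $|\varepsilon^{\mathfrak{m}}\alpha+\xi^{\mathfrak{m}}|^\delta\varepsilon^{-\mathfrak{m}\delta}$ (using $\eta^{-1}$ bounded), and trading it against the denominator to lower the singularity from order $2$ to $2-\delta$ — is clean and reduces the whole proposition to a perturbation of Lemma 5.4. One small point of bookkeeping: the gain from the weakened singularity is $(\varepsilon^{\mathfrak{m}}\eta)^\delta$ rather than $\varepsilon^{\mathfrak{m}\delta}$, so the cancellation against the $\varepsilon^{-\mathfrak{m}\delta}$ prefactor leaves a residual $\eta^\delta$. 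This is harmless but worth stating: the resulting exponent of $\eta$ in the two $Q$-integrals is $d/\mathfrak{m}-2+\delta$ (respectively $-1+\delta$ when $d\geq 2\mathfrak{m}$), which stays strictly negative for $\delta$ small enough, so the hypothesis that $\eta^{-1}$ is bounded absorbs it into the constant exactly as it already does in Lemma 5.4. With that observation made explicit, your choice $\delta<(2\mathfrak{m}-d)/\mathfrak{m}$ in the subcritical case and $\delta<1$ in the case $d\geq 2\mathfrak{m}$ makes all the exponents work out, and the proof is complete.
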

\begin{proof}
The integral on the set $\left|\frac{\alpha+\xi^{\mathfrak{m}}/{\varepsilon^{\mathfrak{m}}}}{\eta}\right|< 1$ is immediately bounded by $C\varepsilon^{\lambda}\alpha^{\frac{\lambda}{\mathfrak{m}}}$ using Lemma 5.4. The integral on the set
$\left|\frac{\alpha+\xi^{\mathfrak{m}}/{\varepsilon^{\mathfrak{m}}}}{\eta}\right|\geq
1$ is bounded by
$C\varepsilon^{\mathfrak{m}}\|1/\langle\xi\rangle^{2d}\|_1$.
\end{proof}

\begin{lemma}
Assume $\eta>0$. We have the following
inequalities:\\
\begin{equation}
\label{5.2.4} \sup_{\omega} \int
\frac{1}{|\alpha+\xi^{\mathfrak{m}}+i\eta|}\frac{1}{\langle\xi-\omega\rangle^{2d}}\frac{1}{\langle\xi\rangle^{2d}}d\xi
\leq C \frac{|\log\eta|}{\langle\alpha\rangle},
\end{equation}
where the $\langle\alpha\rangle$ in the denominator again has to be
dropped if we remove the factor $\langle\xi\rangle^{2d}$.
\begin{proof}
We will first show that
\begin{equation}
\sup_{\omega} \int
\frac{1}{|\alpha+\xi^{\mathfrak{m}}+i\eta|}\frac{1}{\langle\xi-\omega\rangle^{2d}}d\xi
\leq C |\log\eta|.
\end{equation}
If $\eta\geq 1$, the integral is bounded by
\begin{equation}
\int
\frac{1}{|\alpha+\xi^{\mathfrak{m}}+i\eta|}\frac{1}{\langle\xi-\omega\rangle^{2d}}d\xi
\leq \frac{1}{\eta}\int\frac{1}{\langle\xi-\omega\rangle^{2d}}d\xi
\leq \frac{C}{\eta} \leq C|\log\eta|.
\end{equation}
Therefore, we only need to look at the case for $\eta<1$.
The integral over $|\alpha+\xi^{\mathfrak{m}}|\geq 1$ is bounded by
\begin{equation}
\int_{|\alpha+\xi^{\mathfrak{m}}|\geq
1}\frac{1}{|\alpha+\xi^{\mathfrak{m}}+i\eta|}\frac{1}{\langle\xi-\omega\rangle^{2d}}d\xi\leq
C\int \frac{1}{\langle\xi-\omega\rangle^{2d}}d|\xi|\leq C\leq
C|\log\eta|.
\end{equation}
The integral over $|\alpha+\xi^{\mathfrak{m}}|< 1$ can be estimated
by splitting the integration domain according to the size $k\leq
|\xi-\omega|\leq k+1$ for $k=0,1,\cdots$
\begin{equation}
\begin{aligned}
&\int_{|\alpha+\xi^{\mathfrak{m}}|<
1}\frac{1}{|\alpha+\xi^{\mathfrak{m}}+i\eta|}\frac{1}{\langle\xi-\omega\rangle^{2d}}d\xi\\
=&\sum_{k=0}^{+\infty}\int_{\{|\alpha+\xi^{\mathfrak{m}}|< 1\}\cap
\{k\leq |\xi-\omega|\leq
k+1\}}\frac{1}{|\alpha+\xi^{\mathfrak{m}}+i\eta|}
\frac{1}{\langle\xi-\omega\rangle^{2d}}d\xi\\
=&\sum_{k=0}^{+\infty}\int_{|\alpha+\xi^{\mathfrak{m}}|<
1}\frac{d|\xi|}{|\alpha+\xi^{\mathfrak{m}}+i\eta|}\int_{k\leq
|\xi-\omega|\leq k+1}
\frac{J(|\xi|,\theta_1,\cdots,\theta_{d-1})}{\langle\xi-\omega\rangle^{2d}}d\theta_1\cdots
d\theta_{d-1},
\end{aligned}
\end{equation}
where $J(|\xi|,\theta_1,\cdots,\theta_{d-1})$ denotes the Jacobian
to change to polar coordinates. For fixed $|\xi|$, we have the inequality
\begin{equation}
\begin{aligned}
\int_{k\leq |\xi-\omega|\leq k+1}
\frac{J(|\xi|,\theta_1,\cdots,\theta_{d-1})}{\langle\xi-\omega\rangle^{2d}}d\theta_1\cdots
d\theta_{d-1}\leq &\int_{k\leq |\xi-\omega|\leq k+1}
\frac{J(|\xi|,\theta_1,\cdots,\theta_{d-1})}{(1+k^2)^d}d\theta_1\cdots
d\theta_{d-1}\\
\leq&\left\{
\begin{array}{ll}
C\frac{(k+1)^{d-1}}{(1+k^2)^d}|\xi|^{\mathfrak{m-1}},& |\xi|\geq 1\\
C\frac{1}{(1+k^2)^d}|\xi|^{\mathfrak{m-1}},& |\xi|< 1.
\end{array}
\right.
\end{aligned}
\end{equation}
Summing up the terms over $k=0,1,\cdots$ gives
\begin{equation}
\int_{|\alpha+\xi^{\mathfrak{m}}|<
1}\frac{1}{|\alpha+\xi^{\mathfrak{m}}+i\eta|}\frac{1}{\langle\xi-\omega\rangle^{2d}}d\xi\leq
C\int_{|\alpha+\xi^{\mathfrak{m}}|<
1}\frac{|\xi|^{\mathfrak{m}-1}d|\xi|}{|\alpha+\xi^{\mathfrak{m}}+i\eta|}\leq
C|\log\eta|.
\end{equation}
We now prove \eqref{5.2.4}. Without loss of
generality, we may assume $|\alpha|\geq 2$. \\
The integral over the domain $|\alpha+\xi^{\mathfrak{m}}|\geq
{\alpha}/{2}$ is easily bounded by
\begin{equation}
\begin{aligned}
\int_{|\alpha+\xi^{\mathfrak{m}}|\le
\frac{|\alpha|}{2}}\frac{1}{|\alpha+\xi^{\mathfrak{m}}+i\eta|}\frac{1}{\langle\xi-\omega\rangle^{2d}}\frac{1}{\langle\xi\rangle^{2d}}d\xi
&\leq \frac{C}{|\alpha|}\int_{|\alpha+\xi^{\mathfrak{m}}|\le
\frac{|\alpha|}{2}}\frac{1}{\langle\xi-\omega\rangle^{2d}}\frac{1}{\langle\xi\rangle^{2d}}d\xi\\
&\leq \frac{C}{|\alpha|}\leq
\frac{C|\log\eta|}{\langle\alpha\rangle}.
\end{aligned}
\end{equation}
On the domain $|\alpha+\xi^{\mathfrak{m}}|\le |{\alpha}|/{2}$, we
have $\xi^{\mathfrak{m}}\geq |{\alpha}|/{2}$. Note also that
$\langle\xi\rangle^{2d}\geq \langle\xi^{\mathfrak{m}}\rangle$. The
integral over this domain is therefore bounded by
\begin{equation}
\begin{aligned}
\int_{|\alpha+\xi^{\mathfrak{m}}|\le
\frac{|\alpha|}{2}}\frac{1}{|\alpha+\xi^{\mathfrak{m}}+i\eta|}\frac{1}{\langle\xi-\omega\rangle^{2d}}\frac{1}{\langle\xi\rangle^{2d}}d\xi
&\leq
\frac{C}{\langle\alpha\rangle}\int_{|\alpha+\xi^{\mathfrak{m}}|\le
\frac{|\alpha|}{2}}\frac{1}{|\alpha+\xi^{\mathfrak{m}}+i\eta|}\frac{1}{\langle\xi-\omega\rangle^{2d}}d\xi\\
&\leq \frac{C|\log\eta|}{\langle\alpha\rangle}.
\end{aligned}
\end{equation}
\end{proof}
\end{lemma}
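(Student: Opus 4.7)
My strategy is to reduce the full inequality with the $\langle\xi\rangle^{2d}$ weight to a simpler auxiliary estimate without it, and then use that auxiliary bound twice to recover the extra $1/\langle\alpha\rangle$ gain. The key intermediate claim I would establish first is
\begin{equation*}
\sup_{\omega}\int \frac{1}{|\alpha+\xi^{\mathfrak{m}}+i\eta|}\frac{1}{\langle\xi-\omega\rangle^{2d}}d\xi \leq C|\log\eta|,
\end{equation*}
which is precisely the statement with $\langle\xi\rangle^{2d}$ removed and $\langle\alpha\rangle$ dropped. Once this auxiliary bound is in hand, the full inequality follows by a case analysis on the size of $|\alpha|$.

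For the auxiliary bound, I would first dispose of $\eta\geq 1$ by the trivial estimate $|\alpha+\xi^{\mathfrak{m}}+i\eta|\geq\eta$, so that the integrand is dominated by $\eta^{-1}\langle\xi-\omega\rangle^{-2d}$ and the desired $|\log\eta|$ bound is immediate. For $\eta<1$, I would split the integration domain into $\{|\alpha+\xi^{\mathfrak{m}}|\geq 1\}$ and $\{|\alpha+\xi^{\mathfrak{m}}|<1\}$. On the first region the singular denominator is bounded below by $1$ and the integrability of $\langle\xi-\omega\rangle^{-2d}$ closes the estimate. On the second region I would pass to spherical coordinates centered at the origin, with $r=|\xi|$, and carry out the angular integration using a dyadic decomposition $\{k\leq |\xi-\omega|\leq k+1\}$ of the unit sphere; the Jacobian $r^{d-1}$ combined with $(1+k^2)^{-d}$ sums to a constant independent of $\omega$. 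The remaining one-dimensional radial integral, after the change of variable $Q=\alpha+r^{\mathfrak{m}}$, reduces to $\int_{|Q|<1}\frac{dQ}{|Q+i\eta|}$, which is exactly the source of the $|\log\eta|$ factor.

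With the auxiliary estimate in hand, I would turn to the full inequality by cases on $|\alpha|$. For $|\alpha|\leq 2$ the factor $1/\langle\alpha\rangle$ is harmless and bounding $\langle\xi\rangle^{-2d}\leq 1$ immediately yields the claim from the auxiliary bound. For $|\alpha|\geq 2$, I would further split into $\{|\alpha+\xi^{\mathfrak{m}}|>|\alpha|/2\}$ and $\{|\alpha+\xi^{\mathfrak{m}}|\leq|\alpha|/2\}$. In the former region the singular denominator is bounded below by $|\alpha|/2$, which directly produces the factor $1/\langle\alpha\rangle$, while the two weights integrate to a constant. In the latter region one has $\xi^{\mathfrak{m}}\geq|\alpha|/2$, so that $\langle\xi\rangle^{2d}\gtrsim\langle\xi^{\mathfrak{m}}\rangle\gtrsim\langle\alpha\rangle$; pulling this factor out reduces the remaining integral to exactly the auxiliary form and completes the proof.

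The technically most delicate step is the uniformity in $\omega$ of the spherical integration. The main obstacle I anticipate is showing that the angular integral of $\langle\xi-\omega\rangle^{-2d}$ over the sphere of fixed radius $r$ is summable after the dyadic decomposition in $|\xi-\omega|$; the summation converges precisely because the exponent $2d$ dominates the $(d-1)$-dimensional angular volume growth, and this is what permits the bound to hold uniformly in the shift $\omega$. The rest of the argument is a sequence of domain splittings combined with the two elementary lower bounds $|\alpha+\xi^{\mathfrak{m}}+i\eta|\geq\eta$ and $|\alpha+\xi^{\mathfrak{m}}+i\eta|\geq|\alpha|/2$.
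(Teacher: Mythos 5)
Your overall strategy matches the paper's exactly: first establish the auxiliary bound without the $\langle\xi\rangle^{2d}$ weight by splitting into $\eta\geq 1$, then $|\alpha+\xi^{\mathfrak{m}}|\geq 1$ and $<1$, treating the last region in polar coordinates with a dyadic decomposition in $|\xi-\omega|$; then for $|\alpha|\geq 2$ split into $|\alpha+\xi^{\mathfrak{m}}|\gtrless |\alpha|/2$ and extract the factor $1/\langle\alpha\rangle$ either from the denominator or from $\langle\xi\rangle^{2d}\gtrsim\langle\alpha\rangle$. All of this is the paper's proof.

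There is, however, a genuine gap in the step you yourself flag as the most delicate. You write that ``the Jacobian $r^{d-1}$ combined with $(1+k^2)^{-d}$ sums to a constant independent of $\omega$'' and that the remaining radial integral, after substituting $Q=\alpha+r^{\mathfrak{m}}$, ``reduces to $\int_{|Q|<1}\frac{dQ}{|Q+i\eta|}$.'' These two assertions are not compatible. The change of variable brings in $dQ=\mathfrak{m}r^{\mathfrak{m}-1}\,dr$, so a radial integrand carrying the weight $r^{d-1}$ becomes $\frac{1}{\mathfrak{m}}r^{d-\mathfrak{m}}/|Q+i\eta|$, and $r^{d-\mathfrak{m}}\sim|\alpha|^{(d-\mathfrak{m})/\mathfrak{m}}$ on the domain $|\alpha+r^{\mathfrak{m}}|<1$; since $d>\mathfrak{m}$ this is unbounded as $|\alpha|\to\infty$, so the naive Jacobian bound fails to give a $|\log\eta|$ estimate uniform in $\alpha$. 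What is actually needed (and what the paper uses) is the sharper observation that the surface area of the band $\{|\xi|=r\}\cap\{k\leq|\xi-\omega|\leq k+1\}$ is controlled by $\min\bigl(Cr^{d-1},\,C(k+1)^{d-1}\bigr)$ uniformly in $r$ and $\omega$; after summing over $k$ against $(1+k^2)^{-d}$, the angular contribution is $\lesssim\min(1,r^{d-1})\lesssim r^{\mathfrak{m}-1}$ for all $r$, which is exactly the power that cancels the Jacobian $r^{\mathfrak{m}-1}$ in $dQ$ and yields $\int_{|Q|<1}\frac{dQ}{|Q+i\eta|}\lesssim|\log\eta|$. In other words, the decay $\langle\xi-\omega\rangle^{-2d}$ must be used not only to make the $k$-sum converge, but to cap the effective angular weight at a constant rather than at $r^{d-1}$; omitting this makes the auxiliary estimate depend on $\alpha$, and the final inequality would not follow as stated.
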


\begin{lemma}
Let
\begin{equation}
(Y_z
\hat{R})(\xi):=\int\frac{\hat{R}(\xi-y)}{z-\xi^{\mathfrak{m}}}d\xi
\end{equation}
be a family of linear operators parametrized by a complex parameter
$z=\alpha+i\eta$ with $\eta>0$.\\
(i) For $d\geq 3$ we have
\begin{equation}
\label{5.4.1} |Y_z \hat{R}|\leq C\|\hat{R}\|_{2d,2d}.
\end{equation}
(ii) If $z'=\alpha'+i\eta'$ and $\eta\geq\eta'$, then for $d\geq 3$
\begin{equation}
\label{5.4.2} |Y_z \hat{R}-Y_{z'} \hat{R}|\leq
C|z-z'|\eta^{-(1-\frac{1}{\mathfrak{m}})}\|\hat{R}\|_{2d,2d}.
\end{equation}
\end{lemma}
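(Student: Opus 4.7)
The singular kernel $(z-\xi^{\mathfrak{m}})^{-1}$ concentrates near the sphere $|\xi|=\alpha^{1/\mathfrak{m}}$ when $\alpha>0$ and is regular when $\alpha\leq 0$. My plan is to pass to polar coordinates adapted to this sphere, reduce to 1D singular integrals of the form $\int \phi(u)/(u-z)\,du$, and control them via Hilbert-transform-type bounds based on the smoothness of $\hat R$. The norm $\|\hat R\|_{2d,2d}$ provides, by Sobolev embedding, the pointwise control $|\hat R|+|\nabla \hat R|\lesssim \langle\cdot\rangle^{-2d}\|\hat R\|_{2d,2d}$, which is what will give uniformity in the parameter $y$.

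\textbf{Proof of (i).} When $\alpha\leq 0$, $|z-\xi^{\mathfrak{m}}|\geq \xi^{\mathfrak{m}}$, so $|Y_z\hat R|\leq \int |\hat R(\xi-y)|/|\xi|^{\mathfrak{m}}\,d\xi$: the singularity at $\xi=0$ is integrable because $d>\mathfrak{m}$, and the bound is uniform in $y$ by the decay of $\hat R$. For $\alpha>0$, I would write $\xi=r\omega$ and substitute $u=r^{\mathfrak{m}}$ to obtain
\[
(Y_z\hat R)(y)=\frac{1}{\mathfrak{m}}\int_{S^{d-1}}\!d\omega\int_0^\infty \frac{\phi_\omega(u)}{z-u}\,du,\qquad \phi_\omega(u):=\hat R(u^{1/\mathfrak{m}}\omega-y)\,u^{d/\mathfrak{m}-1}.
\]
The function $\phi_\omega$ is smooth on $(0,\infty)$, continuous at $0$ because $d>\mathfrak{m}$, and decays rapidly for large $u$. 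I would split the $u$-integration into a bounded neighborhood of $\alpha$ and its complement: on the complement, $|u-z|$ is bounded below and an $L^1$ estimate in $\phi_\omega$ suffices; on the neighborhood, I Taylor-expand $\phi_\omega(u)=\phi_\omega(\alpha)+(\phi_\omega(u)-\phi_\omega(\alpha))$. The remainder is $O(|u-\alpha|)$ and kills the pole, while the constant part produces $\phi_\omega(\alpha)\int du/(u-z)$ over a symmetric interval, bounded uniformly in $\eta$ by the standard cancellation $|\int_{-r}^r du/(u+i\eta)|=O(1)$. Integrating in $\omega$ and invoking the pointwise decay of $\hat R$ and $\nabla\hat R$ (via a case analysis on whether $|y|\ll r_0$, $|y|\gg r_0$, or $|y|\sim r_0$, with $r_0=\alpha^{1/\mathfrak{m}}$) yields $|Y_z\hat R|\leq C\|\hat R\|_{2d,2d}$ uniformly in $y$ and $z$.

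\textbf{Proof of (ii) and the main obstacle.} I would first write $Y_z\hat R-Y_{z'}\hat R=(z'-z)\int\hat R(\xi-y)/[(z-\xi^{\mathfrak{m}})(z'-\xi^{\mathfrak{m}})]\,d\xi$ and repeat the same polar reduction. Near $u=\alpha$, expand $\phi_\omega(u)=\phi_\omega(\alpha)+(u-\alpha)\psi_\omega(u)$ with $\psi_\omega$ bounded: by partial fractions the constant piece contributes $\phi_\omega(\alpha)(z-z')^{-1}$ times a difference of logs, which after multiplication by the prefactor $(z'-z)$ is $O(1)$; the linear piece cancels one of the two singular factors, leaving $\int \psi_\omega(u)\,du/(u-z)$ (or its $z'$-analogue) bounded by $C|\log\eta|$ via Lemma 5.2. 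Since $|\log\eta|\lesssim \eta^{-(1-1/\mathfrak{m})}$ as $\eta\to 0$, the stated estimate follows after integrating in $\omega$. The main obstacle I expect is the uniformity in $y$ already in part (i): the singular sphere $|\xi|=r_0$ may lie close to the support of $\hat R(\cdot-y)$, causing the naive pointwise bounds on $\phi_\omega$ and its derivative to deteriorate. The large weights $2d$ in $\|\hat R\|_{2d,2d}$ provide exactly the decay needed to close a case analysis in $(r_0,|y|)$, and the same decay transfers to (ii) with no extra work.
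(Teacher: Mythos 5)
The paper offers no proof of this lemma --- it refers to the analogous statement in Erd\H{o}s--Yau \cite{Erdos-Yau2} (there for $\mathfrak{m}=2$) --- so you are reconstructing a missing argument. Your overall plan (polar coordinates, the substitution $u=r^{\mathfrak{m}}$, a Taylor expansion of $\phi_\omega$ at the pole $u=\alpha$, plus a case analysis in $y$) is the right kind of attack, and the treatment of $\alpha\le 0$ in (i) is fine. The difficulty is the one quantitative input you lean on near the pole: that the Taylor remainder of $\phi_\omega$ is $O(|u-\alpha|)$ uniformly, and, in (ii), that $\psi_\omega$ is bounded. Neither holds uniformly in $\alpha$: the Jacobian contributes the factor $u^{d/\mathfrak{m}-1}$, so
\[
\phi_\omega'(u)=\tfrac{1}{\mathfrak{m}}u^{\frac{d+1}{\mathfrak{m}}-2}\,\nabla\hat R(u^{1/\mathfrak{m}}\omega-y)\cdot\omega+\bigl(\tfrac{d}{\mathfrak{m}}-1\bigr)u^{\frac{d}{\mathfrak{m}}-2}\hat R(u^{1/\mathfrak{m}}\omega-y),
\]
which blows up as $u\to0^+$ whenever $\mathfrak{m}<d<2\mathfrak{m}$. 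This degeneration is entirely independent of $y$, and your ``main obstacle'' paragraph, organized around $|y|$ versus $r_0$, does not catch it.

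For (i) this is repairable but requires a genuinely multi-scale step: split $[0,2]$ into $[0,\alpha/2]$, $[\alpha/2,2\alpha]$, $[2\alpha,2]$; on the middle piece use $\sup|\phi_\omega'|\lesssim\alpha^{d/\mathfrak{m}-2}$ together with the length $\sim\alpha$, on $[0,\alpha/2]$ use $\phi_\omega\lesssim u^{d/\mathfrak{m}-1}$ and $|z-u|\gtrsim\alpha$, and on $[2\alpha,2]$ use $|z-u|\gtrsim u$. Everything is $O(\alpha^{d/\mathfrak{m}-1})$ or $O(1)$, so (i) goes through, but your assertion of a uniform $O(|u-\alpha|)$ remainder on a fixed neighborhood is false in this regime. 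For (ii) the gap is fatal as written, because the degeneration of $\phi_\omega'$ near $\alpha\sim\eta$ is precisely the source of the factor $\eta^{-(1-1/\mathfrak{m})}$. Take $\mathfrak{m}=2$, $d=3$, $y=0$, $z=z'=\alpha+i\eta$ with $\alpha\sim\eta$; one integration by parts gives
\[
\int_{\mathbb R^3}\frac{\hat R(\xi)}{(z-|\xi|^2)^2}\,d\xi=-2\pi\int_0^\infty\frac{\hat R'(r)r+\hat R(r)}{z-r^2}\,dr,
\]
and the right-hand side is of genuine order $\eta^{-1/2}$, not $|\log\eta|$ (already $|z-r^2|\sim\eta$ on $r\lesssim\sqrt\eta$ gives $\eta^{-1}\cdot\sqrt\eta$). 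So your claim ``$\psi_\omega$ bounded, hence $\lesssim|z-z'||\log\eta|$, which is $\lesssim|z-z'|\eta^{-(1-1/\mathfrak{m})}$'' presents a false sharper bound rather than a generous overestimate; subsuming $|\log\eta|$ into a power of $\eta$ does not repair the false premise. To close (ii) you must track $\sup_{[\alpha/2,2\alpha]}|\phi_\omega'|\lesssim\alpha^{d/\mathfrak{m}-2}$ through the same multi-scale decomposition and observe that the worst case $\alpha\sim\eta$ yields $\eta^{(d-2\mathfrak{m})/\mathfrak{m}}\le\eta^{-(1-1/\mathfrak{m})}$ since $d\ge\mathfrak{m}+1$. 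A minor point: Lemma~5.2 is a two-pole estimate and is not the right reference for $\int\psi_\omega(u)\,du/(u-z)$; what you want is the elementary $\int_{|u-\alpha|\le\delta}|u-\alpha-i\eta|^{-1}\,du\lesssim 1+\log(\delta/\eta)$.
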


\begin{lemma}
Let $\hat{R}$ satisfy the smoothness condition defined in Section 1. We have the following inequality:
\begin{equation}
\label{homoin}
\varepsilon^{\mathfrak{m}}\int (\frac{1}{\xi_1^{\mathfrak{m}}-\varepsilon^{\mathfrak{m}}\xi^{\mathfrak{m}}}\wedge \frac{1}{\varepsilon^{\mathfrak{m}}}) \frac{\hat{R}(\xi_1-\varepsilon\xi)}{\xi_1^{\mathfrak{m}}}d\xi_1
\leq C\max\{\varepsilon^{\mathfrak{m}},\varepsilon^{d-\mathfrak{m}}|\log\varepsilon|(\xi^{\mathfrak{m}}+1)^{\frac{d}{\mathfrak{m}}-2},
\varepsilon^{d-\mathfrak{m}}(\xi^{\mathfrak{m}}+1)^{\frac{d}{\mathfrak{m}}-1}\}.
\end{equation}
\begin{proof}
We decompose the integral into three integrals on $\{\xi_1^{\mathfrak{m}}\geq\varepsilon^{\mathfrak{m}}\xi^{\mathfrak{m}}+1\}$, $\{\varepsilon^{\mathfrak{m}}(\xi^{\mathfrak{m}}+1)\leq\xi_1^{\mathfrak{m}}\leq \varepsilon^{\mathfrak{m}}\xi^{\mathfrak{m}}+1\}$ and $\{0\leq\xi_1^{\mathfrak{m}}\leq\varepsilon^{\mathfrak{m}}(\xi^{\mathfrak{m}}+1)\}$. Clearly, the first integral in bounded by $\varepsilon^{\mathfrak{m}} \int \frac{\hat{R}(\xi_1-\varepsilon\xi)}{\xi^{\mathfrak{m}}}d\xi_1$. For the second integral, note that $\hat{R}$ is bounded, and use $\frac{1}{\xi_1^{\mathfrak{m}}-\varepsilon^{\mathfrak{m}}\xi^{\mathfrak{m}}}$ to bound the term in the bracket on the left hand side of \eqref{homoin}. We then change variable to polar coordinates and let $Q=\xi_1^{\mathfrak{m}}$ to obtain
\begin{equation}
\begin{aligned}
\varepsilon^{\mathfrak{m}}\int_{\{\varepsilon^{\mathfrak{m}}(\xi^{\mathfrak{m}}+1)\leq\xi_1^{\mathfrak{m}}\leq \varepsilon^{\mathfrak{m}}\xi^{\mathfrak{m}}+1\}}  (\frac{1}{\xi_1^{\mathfrak{m}}-\varepsilon^{\mathfrak{m}}\xi^{\mathfrak{m}}}\wedge \frac{1}{\varepsilon^{\mathfrak{m}}}) \frac{\hat{R}(\xi_1-\varepsilon\xi)}{\xi_1^{\mathfrak{m}}}d\xi_1
&\leq C\varepsilon^{\mathfrak{m}}\int_{\varepsilon^{\mathfrak{m}}(\xi^{\mathfrak{m}}+1)}^{\varepsilon^{\mathfrak{m}}\xi^{\mathfrak{m}}+1}
\frac{Q^{\frac{d}{\mathfrak{m}}-2}}{Q-\varepsilon^{\mathfrak{m}}\xi^{\mathfrak{m}}}dQ\\
&\leq C\varepsilon^{d-\mathfrak{m}}|\log\varepsilon|(\xi^{\mathfrak{m}}+1)^{\frac{d}{\mathfrak{m}}-2}.
\end{aligned}
\end{equation}
For the third part, we use ${1}/{\varepsilon^{\mathfrak{m}}}$ to bound the term in the bracket on the left hand side of \eqref{homoin} and apply the same type of change of variable to obtain
\begin{equation}
\begin{aligned}
\varepsilon^{\mathfrak{m}}\int_{\{0\leq\xi_1^{\mathfrak{m}}\leq\varepsilon^{\mathfrak{m}}(\xi^{\mathfrak{m}}+1)\}} (\frac{1}{\xi_1^{\mathfrak{m}}-\varepsilon^{\mathfrak{m}}\xi^{\mathfrak{m}}}\wedge \frac{t}{\varepsilon^{\mathfrak{m}}}) \frac{\hat{R}(\xi_1-\varepsilon\xi)}{\xi_1^{\mathfrak{m}}}d\xi_1
&\leq t\int_0^{\varepsilon^{\mathfrak{m}}(\xi^{\mathfrak{m}}+1)} Q^{\frac{d}{\mathfrak{m}}-2}dQ\\
&\leq C\varepsilon^{d-\mathfrak{m}}(\xi^{\mathfrak{m}}+1)^{\frac{d}{\mathfrak{m}}-1}.
\end{aligned}
\end{equation}
\end{proof}
\end{lemma}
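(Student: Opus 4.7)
The plan is to follow exactly the three-region decomposition already sketched for the proof, and verify that each piece produces one of the three terms on the right-hand side of \eqref{homoin}. First, I would split the integration domain in $\xi_1$ according to the size of $\xi_1^{\mathfrak{m}}$ relative to the pole at $\varepsilon^{\mathfrak{m}}\xi^{\mathfrak{m}}$: the ``far'' region $\{\xi_1^{\mathfrak{m}}\geq \varepsilon^{\mathfrak{m}}\xi^{\mathfrak{m}}+1\}$, the ``middle'' region $\{\varepsilon^{\mathfrak{m}}(\xi^{\mathfrak{m}}+1)\leq \xi_1^{\mathfrak{m}}\leq \varepsilon^{\mathfrak{m}}\xi^{\mathfrak{m}}+1\}$ on which the minimum is realized by $1/(\xi_1^{\mathfrak{m}}-\varepsilon^{\mathfrak{m}}\xi^{\mathfrak{m}})$ but the denominator is small, and the ``near zero'' region $\{0\leq\xi_1^{\mathfrak{m}}\leq \varepsilon^{\mathfrak{m}}(\xi^{\mathfrak{m}}+1)\}$ on which $1/\varepsilon^{\mathfrak{m}}$ is the smaller of the two.

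On the far region the denominator $\xi_1^{\mathfrak{m}}-\varepsilon^{\mathfrak{m}}\xi^{\mathfrak{m}}\geq 1$, so I bound $(\cdot)\wedge(\cdot)\leq 1$ and reduce to $\varepsilon^{\mathfrak{m}}\int \hat{R}(\xi_1-\varepsilon\xi)/\xi_1^{\mathfrak{m}}\,d\xi_1$. Since $\hat{R}\in\mathcal{S}(\mathbb{R}^d)$ is Schwartz, $\hat{R}/\xi_1^{\mathfrak{m}}$ is integrable for $d>\mathfrak{m}$, and this piece contributes at most $C\varepsilon^{\mathfrak{m}}$, giving the first term in the $\max$. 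On the near-zero region I use the other branch of the minimum, $1/\varepsilon^{\mathfrak{m}}$, which yields
\begin{equation*}
\int_{\{\xi_1^{\mathfrak{m}}\leq\varepsilon^{\mathfrak{m}}(\xi^{\mathfrak{m}}+1)\}}\frac{\hat{R}(\xi_1-\varepsilon\xi)}{\xi_1^{\mathfrak{m}}}d\xi_1,
\end{equation*}
which after passing to polar coordinates with $Q=\xi_1^{\mathfrak{m}}$ becomes $C\int_0^{\varepsilon^{\mathfrak{m}}(\xi^{\mathfrak{m}}+1)} Q^{d/\mathfrak{m}-2}\,dQ$ (using $\|\hat{R}\|_\infty<\infty$). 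Since $d/\mathfrak{m}-1>0$, this integral evaluates to $C\varepsilon^{d-\mathfrak{m}}(\xi^{\mathfrak{m}}+1)^{d/\mathfrak{m}-1}$, producing the third term in the $\max$.

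The main obstacle is the middle region, because it is precisely where the denominator $\xi_1^{\mathfrak{m}}-\varepsilon^{\mathfrak{m}}\xi^{\mathfrak{m}}$ ranges from $\varepsilon^{\mathfrak{m}}$ up to $1$ and so produces the logarithmic factor. On that region I bound $\hat{R}$ by its sup norm, keep $1/(\xi_1^{\mathfrak{m}}-\varepsilon^{\mathfrak{m}}\xi^{\mathfrak{m}})$, and change to polar coordinates plus $Q=\xi_1^{\mathfrak{m}}$ to reduce to
\begin{equation*}
C\varepsilon^{\mathfrak{m}}\int_{\varepsilon^{\mathfrak{m}}(\xi^{\mathfrak{m}}+1)}^{\varepsilon^{\mathfrak{m}}\xi^{\mathfrak{m}}+1}\frac{Q^{d/\mathfrak{m}-2}}{Q-\varepsilon^{\mathfrak{m}}\xi^{\mathfrak{m}}}\,dQ.
\end{equation*}
Bounding $Q^{d/\mathfrak{m}-2}\lesssim (\xi^{\mathfrak{m}}+1)^{d/\mathfrak{m}-2}$ uniformly over the range (since the upper endpoint is $\varepsilon^{\mathfrak{m}}\xi^{\mathfrak{m}}+1$ and $\varepsilon\leq 1$), the remaining integral $\int dQ/(Q-\varepsilon^{\mathfrak{m}}\xi^{\mathfrak{m}})$ over an interval of length at most $1$ starting at distance $\varepsilon^{\mathfrak{m}}$ from the singularity produces exactly a $|\log\varepsilon|$ factor. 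This yields $C\varepsilon^{d-\mathfrak{m}}|\log\varepsilon|(\xi^{\mathfrak{m}}+1)^{d/\mathfrak{m}-2}$, the middle term. Combining the three estimates and taking the maximum gives \eqref{homoin}. The only delicate point is checking that the supremum of $Q^{d/\mathfrak{m}-2}$ over the middle region is indeed of the claimed order uniformly in $\varepsilon$ and $\xi$, which I would verify separately in the cases $d\geq 2\mathfrak{m}$ and $\mathfrak{m}<d<2\mathfrak{m}$ (in the latter the exponent $d/\mathfrak{m}-2$ is negative, so the supremum is realized at the lower endpoint and yields an even better estimate).
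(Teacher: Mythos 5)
Your three-region decomposition coincides with the paper's, and the far region and near-zero region are handled correctly. The issue is in the middle region, and specifically in the case $d\geq 2\mathfrak{m}$ (i.e.\ $p:=d/\mathfrak{m}-2\geq 0$). After bounding $\hat R$ by its sup and changing to $Q=\xi_1^{\mathfrak{m}}$, you factor out $\sup_Q Q^{p}\leq(\xi^{\mathfrak{m}}+1)^{p}$ and then integrate $\int dQ/(Q-\varepsilon^{\mathfrak{m}}\xi^{\mathfrak{m}})\sim|\log\varepsilon|$. Multiplying by the overall prefactor $\varepsilon^{\mathfrak{m}}$ gives $\varepsilon^{\mathfrak{m}}|\log\varepsilon|(\xi^{\mathfrak{m}}+1)^{p}$, \emph{not} the $\varepsilon^{d-\mathfrak{m}}|\log\varepsilon|(\xi^{\mathfrak{m}}+1)^{p}$ you write; the extra factor $\varepsilon^{d-2\mathfrak{m}}$ cannot appear once you have discarded the $\varepsilon$-dependence of $Q$ by bounding $Q^{p}\lesssim(\xi^{\mathfrak{m}}+1)^{p}$. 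Worse, for $d>2\mathfrak{m}$ and $\xi$ bounded, $\varepsilon^{\mathfrak{m}}|\log\varepsilon|$ exceeds \emph{every} term of the claimed maximum ($\varepsilon^{\mathfrak{m}}$, $\varepsilon^{d-\mathfrak{m}}|\log\varepsilon|$ and $\varepsilon^{d-\mathfrak{m}}$ are all $\ll\varepsilon^{\mathfrak{m}}|\log\varepsilon|$ as $\varepsilon\to 0$), so the sup-out strategy is too lossy to close the proof in this range of $d$. (For $\mathfrak{m}<d<2\mathfrak{m}$, where $p<0$, the sup is at the lower endpoint $\varepsilon^{\mathfrak{m}}(\xi^{\mathfrak{m}}+1)$ and does produce the needed power $\varepsilon^{d-2\mathfrak{m}}$; your reasoning is fine there.)

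The remedy for $d\geq 2\mathfrak{m}$ is to keep the $\varepsilon$-dependence inside the integral rather than pull out a sup. Setting $u=Q-\varepsilon^{\mathfrak{m}}\xi^{\mathfrak{m}}\in[\varepsilon^{\mathfrak{m}},1]$, use $Q^{p}=(u+\varepsilon^{\mathfrak{m}}\xi^{\mathfrak{m}})^{p}\leq C_p\bigl(u^{p}+(\varepsilon^{\mathfrak{m}}\xi^{\mathfrak{m}})^{p}\bigr)$, giving
\begin{equation*}
\varepsilon^{\mathfrak{m}}\int_{\varepsilon^{\mathfrak{m}}}^{1}\frac{(u+\varepsilon^{\mathfrak{m}}\xi^{\mathfrak{m}})^{p}}{u}\,du
\leq C\varepsilon^{\mathfrak{m}}\int_{\varepsilon^{\mathfrak{m}}}^{1}u^{p-1}\,du
+C\varepsilon^{\mathfrak{m}}(\varepsilon^{\mathfrak{m}}\xi^{\mathfrak{m}})^{p}\int_{\varepsilon^{\mathfrak{m}}}^{1}\frac{du}{u}
\leq C\varepsilon^{\mathfrak{m}}+C\varepsilon^{d-\mathfrak{m}}(\xi^{\mathfrak{m}}+1)^{\frac{d}{\mathfrak{m}}-2}|\log\varepsilon|,
\end{equation*}
where $\mathfrak{m}(1+p)=d-\mathfrak{m}$ and $\mathfrak{m}p=d-2\mathfrak{m}$. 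Both summands are now within the claimed maximum (the $\varepsilon^{\mathfrak{m}}$ term being absorbed by the far-region contribution). Note that the paper's own intermediate display for the middle region also omits this $\varepsilon^{\mathfrak{m}}$ term, but that is harmless for the final statement since it is already a $\max$; your version, however, produces a genuinely too-large $\varepsilon^{\mathfrak{m}}|\log\varepsilon|$ because the sup bound drops the helpful $\varepsilon^{d-2\mathfrak{m}}$ factor, so the fix above is needed.
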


\section*{Acknowledgment} The authors would like to thank L{\'a}szl{\'o} Erd\H{o}s for useful discussions during the preparation of the manuscript. This work was funded in part by the National Science Foundation under Grant DMS-1108608 and the Air Force Office of Scientific Research under Grant NSSEFF-FA9550-10-1-0194.


\end{document}